\documentclass[11pt,reqno]{amsart}
\usepackage{amsfonts}
\usepackage{mathtools}
\usepackage{amsmath}
\usepackage{amsthm}
\usepackage{amssymb}
\usepackage{comment}
\usepackage{amscd}
\usepackage{cite}
\usepackage[ngerman,english]{babel}
\usepackage[mathscr]{eucal}
\usepackage{indentfirst}
\usepackage{graphicx}
\usepackage{graphics}
\usepackage{pict2e}
\usepackage{epic}
\numberwithin{equation}{section}
\usepackage[margin = 3.5cm, top = 3cm, bottom = 3cm]{geometry}
\usepackage{epstopdf} 
\usepackage{bbm}
\usepackage{xcolor}
\usepackage{comment}
\usepackage{tikz}
\usepackage{mathtools}
\usepackage{enumitem}

\let\OLDthebibliography\thebibliography
\renewcommand\thebibliography[1]{
	\OLDthebibliography{#1}
	\setlength{\parskip}{0pt}
	\setlength{\itemsep}{3pt}
}
\definecolor{myblue}{HTML}{26649b}
\definecolor{mygreen}{HTML}{46C755}

\calclayout

\usepackage{hyperref}
\hypersetup{ colorlinks=true,
    allcolors= [RGB]{0,0,128} 
}

\numberwithin{equation}{section}

\theoremstyle{plain}
\newtheorem{Th}{Theorem}[section]
\newtheorem{Lemma}[Th]{Lemma}
\newtheorem{lem}[Th]{Lemma}
\newtheorem{defn}[Th]{Definition}
\newtheorem{Cor}[Th]{Corollary}
\newtheorem{Prop}[Th]{Proposition}
\newtheorem{prop}[Th]{Proposition}
\DeclareMathOperator{\R}{\mathbb{R}}
\DeclareMathOperator{\Z}{\mathbb{Z}}
\DeclareMathOperator{\C}{\mathbb{C}}
\DeclareMathOperator{\N}{\mathbb{N}}

\DeclareMathOperator{\cN}{\mathcal{N}}

\newcommand{\f}[2]{\frac{#1}{#2}}

\def \re{\mathrm{Re}}
\def \imm{\mathrm{Im}}
\def \d{\mathrm{d}}
\def \dt{\mathrm{d}t}
\def \G{\mathcal{G}}
\def \H{\mathcal{H}}

\theoremstyle{definition}

\newtheorem{Rem}[Th]{Remark}
\newtheorem{?}[Th]{Problem}

\renewcommand{\Re}{Re}

\setlist[enumerate]{leftmargin=10mm}

\begin{document}

\title[Soliton clusters for the $L^2$-critical Hartree equation]{On soliton clusters and collision blow up\\ for the $L^2$-critical Hartree equation}

	\author{Tobias Schmid}
\address{University of Vienna, Faculty of Mathematics, Oskar-Morgenstern-Platz 1, 1090 Vienna} 
 \email{tobias.schmid@univie.ac.at}
\author{Yutong Wu}
\address{Department of Mathematics, Yale University, New Haven, CT 06511}
\email{yutong.wu.yw894@yale.edu}

	\subjclass[2020]{Primary: 35B40. Secondary: 35B44}

	\keywords{mass critical, soliton clusters, Hartree, multisoliton, blow-up, n-body problem}

\begin{abstract}
We consider the $L^2$-critical nonlinear Hartree equation in $\R^{1+4}$ and multisoliton solutions for which the trajectories are approximated to leading order by an $m$-body law. We obtain soliton clusters asymptotically following hyperbolic-parabolic trajectories of the corresponding $m$-body problem. By pseudo-conformal invariance, we then conclude finite-time collision blow-up with any number of clusters, each consisting of an arbitrary number of solitons, colliding simultaneously at distinct prescribed points. 
\end{abstract}

\maketitle

\section{Introduction}

\subsection{Setting of the problem} We consider the $L^2$-critical nonlinear Hartree equation, i.e.  we study the following cubic Schr\"odinger equation 
	\begin{align}\label{H} 
		\begin{cases}
			\;\; i \partial_t u + \Delta u - \phi_{|u|^2} u = 0,\;\;(t,x) \in [0,T)\times\R^{ 4}&\\[2pt]
			\;\; u(0, x) = u_0(x),\;\;x \in \R^4, &
		\end{cases}
	\end{align}
    where we set $\phi_{|u|^2}$ to be the convolution of $|u|^2$ with the inverse square potential, i.e. 
	\begin{align} \label{def-conv}
		\phi_{|u|^2} = - \frac{1}{|x|^2} \ast |u|^2,\;\; x \in \R^4,
	\end{align}
	and therefore $\Delta \Phi = |u|^2$ for  $ \Phi = \kappa \phi_{|u|^2}$ with a constant $\kappa = \frac{1}{4\pi^2}$. Hence the nonlinearity in \eqref{H} models a gravitational coupling which introduces long-range effects and typically  appears for instance related to the mean field dynamics of Bose gases, cf. \cite{Fr-Lenz} and Boson stars, see \cite{El-Schlein}, \cite{LenzmannNondegen}.\\[4pt]
     It is well known, see \cite{Caz,Ginibre-Velo}, that \eqref{H} is locally wellposed in $ H^1(\R^4)$.  In particular \eqref{H} is a Hamiltonian equation for which $H^1$-solutions conserve \emph{energy, mass} and \emph{momentum}
	\begin{gather}
		\text{\emph{Hamiltonian\;energy}}:\;\;\hspace{0cm}\mathcal{E}(u(t)) = \f12 \int |\nabla u(t)|^2 \;\d x - \frac{\kappa}{4} \int | \nabla \phi_{|u(t)|^2}|^2 \; \d x  = \mathcal{E}(u_0),\\[3pt]
		\text{\emph{$L^2$-norm (mass)}}:\;\; \hspace{0cm}\int |u(t)|^2 \;\d x = \int |u_0|^2\;\d x,\\[3pt]
		\text{\emph{Momentum}}:\;\; \hspace{0cm}\text{Im}\bigg( \int  \nabla u(t) \cdot \overline{u(t)}\;\d x \bigg) =  \text{Im}\bigg( \int \nabla u_0 \cdot \overline{u_0}\;\d x \bigg).
	\end{gather}
	 The equation \eqref{H} has the following symmetries
	\begin{align} \label{inv}
		u(t,x) \mapsto \lambda^2 u( \lambda^2 t + t_0, \lambda x  - \alpha - \beta t) e^{i(\f12 \beta \cdot x - \f14 |\beta|^2 t + \gamma)},
	\end{align}
    where $ \lambda \in \R_+, t_0 \in \R, \alpha, \beta \in \R^4, \gamma \in \R$, meaning if  $ u(t,x) $ is a solution of \eqref{H}, then $u_{\lambda, t_0, \alpha, \beta, \gamma}(t,x)$ defined via \eqref{inv}
	is a solution of \eqref{H}. \\
    [3pt]
    Further if $u \in C^0([0,T), H^1(\R^4))$ is a solution of  \eqref{H} and blows up at $t=T$, then
    \[
    \limsup_{t \to T^-} \| \nabla u \|_{L_x^2}^2 = \infty.
    \]
    In fact, blow-up with $ \mathcal{E}(u_0)< 0$ follows from the virial law 
    \begin{align*} \nonumber
        &\frac{\d^2}{\d t^2} \int |x|^2|u(t,x)|^2\;\d x	 = 16 \mathcal{E}(u_0),
	\end{align*}
    hence all solutions in $H^1 \cap L^2(|x|^2 \d x)$ blow up in finite time if $ \mathcal{E}(u_0)< 0$.\\[3pt] 
	Moreover, equation \eqref{H} has \emph{soliton} solutions $u_{\omega}(t,x)= e^{i \omega^2 t} Q_{\omega}(x)$, where $ Q_{\omega} = \omega^2 Q_1(\omega \cdot) $ and $Q := Q_1$ is the unique radial positive $H^1$-solution of 
	\begin{align}\label{GS-equation}
		\Delta Q   - \phi_{|Q|^2} Q =  Q,\;\; x \in \R^4.
	\end{align}
	Such $Q$ is called the \emph{ground state}. The existence follows from the adaption of \cite{Weinstein} and for uniqueness we refer to \cite{Lieb} (in $d=3$), as well as \cite[Section 4]{KLR} in $ d=4$. \\ [3pt]
	By using Gagliardo-Nirenberg interpolation and Hardy-Littlewood-Sobolev's inequality, 
	\[
	\mathcal{E}(u) \geq \| \nabla u \|_{L^2_x} \bigg(1 - \frac{\| Q\|_{L^2_x}}{\| u\|_{L^2_x}}\bigg),\;\; u \in H^1(\R^4),
	\]
and hence the solutions of \eqref{H} are global if $ u_0 \in H^1(\R^4) $ with $ \| u_0 \|_{L^2_x} < \| Q \|_{L^2_x}$. This is sharp due to the  \emph{pseudo-conformal symmetry}, i.e. if $u(t,x) $ solves \eqref{H}, then so does 
	\begin{align}
		u(t,x) \mapsto \frac{1}{t^2} \overline{u}(\frac{1}{t}, \frac{x}{t})e^{i \frac{|x|^2}{4t}}.
	\end{align}
	In particular, applying the symmetry to $e^{it}Q(x)$ we obtain a \emph{minimal mass blow-up solution} 
	\begin{equation} \nonumber
		S(t,x) = \frac{1}{t^2} Q(\frac{x}{t}) e^{i( -  \frac{1}{t} +  \frac{|x|^2}{4t})},
    \end{equation}
    which satisfies $\| S(t) \|_{L^2_x} = \| Q\|_{L^2_x}$ and $\|\nabla S(t) \|_{L^2_x} \sim |t|^{-1},\; t \to 0^-$. For radial data, Krieger-Lenzmann-Rapha\"el \cite{KLR} proved the finite-codimension stability of $S(t)$. Besides the pseudo-conformal rate, formal and numerical calculations in \cite{YRZ-Hartree} reveal further log-log blow-up mechanisms as for the $L^2$-critical (local) NLS. 
    
    \;\\
    \emph{Literature}. Recently, in \cite{GSW}, the authors have constructed multisoliton  solutions of \eqref{H} with modulation trajectories following  non-trapped solutions (parabolic and hyperbolic trajectories) of an $m$-body problem derived from the Newton potential $|x|^{-2}$. Via the pseudo-conformal invariance, this leads to blow-up at any number of distinct points of multiplicity one, and collision blow-up at a single point of any multiplicity (see \cite[Section 1.3]{GSW}). \\[4pt]
    The method of proof applied in \cite{GSW} was pioneered for the 3D Hartree-NLS by Krieger-Martel-Rapha\"el \cite{KRM} for two-soliton solutions along asymptotically hyperbolic/parabolic orbits. Subsequently, this approach was extended by the second author \cite{Wu} to 3D multisoliton solutions following hyperbolic, parabolic and hyperbolic-parabolic motion.\\[4pt]
    In comparison to \eqref{H}, the $L^2$-critical NLS
	\begin{align} \label{mass-NLS}
		i \partial_t u + \Delta u  = - |u|^{\frac{4}{d}}u \;\; \text{in} \;\; \R^{1+d} 
	\end{align}
	is well studied concerning $H^1$-blow-up and solitary dynamics, see the overview in \cite[Section 1]{GSW} and \cite[Section 1]{MR} for instance.  Multisoliton solutions for \eqref{mass-NLS} have been proved to exist in \cite{Merle-k-bl}, \cite{MR}, \cite{PR}, \cite{Fan} and generally provide a description  of large data solutions, see in particular \cite{MM},  \cite{Koch-T}, \cite{RodSS}, \cite{CMM},     \cite{P} for the construction and stability results in NLS equations.\\[4pt]
    More relevant for our present work is the \emph{strong interaction regime}, see \cite{KRM}, \cite{Wu}, \cite{MR}, \cite{N} for which we also refer to strongly interacting kinks of $(1+1)$-scalar fields \cite{J1}, \cite{J2}. In particular in the latter work a (very different) 1-dimensional $m$-body approximation is used to derive the trajectories to leading order.
    \\[8pt]
    \emph{\textbf{In this article}}, we extend the multisoliton result \cite[Section 1.3]{GSW} and \emph{obtain solutions of \eqref{H} with modulation trajectories asymptotic to hyperbolic-parabolic orbits of  a suitable $m$-body law \eqref{m-body}} (see Theorem \ref{main-thm}).\\[4pt] Further, by applying the pseudo-conformal transformation, we obtain  \emph{finite-time collision blow-up at the pseudo-conformal rate with multiple soliton trajectories  colliding simultaneously at multiple distinct prescribed points} (see Corollary \ref{Cor blow up}).  
    
    \;\\
\emph{The $m$-body law}. Let $ m \in \Z_{\geq 2} $ and $ \alpha = (\alpha_1, \alpha_2, \dots, \alpha_m) \in \R^{ 4m} $ be the configuration of $ m $ bodies with center $ \alpha_j \in \R^4$. We consider (c.f.  \cite[Section 1.2]{GSW}) the following \emph{$m$-body law} 
	\begin{align}  
    \label{m-body}
		\begin{cases}
			\;\;\dot{\alpha}_j(t) =  2\beta_j(t),\; \;1 \leq  j \leq m,\;&\;\;\\[5pt]
			\;\;\displaystyle	\dot{\beta}_j(t) =  -  \| Q \|^2_{L^2_x} \sum_{ k \neq j }   \frac{\alpha_j - \alpha_k}{|\alpha_j - \alpha_k|^4}.&
		\end{cases}
	\end{align}
	The system \eqref{m-body} has a  first integral $H = K - U$, where
	\begin{align}\label{energy}
	K(\beta) =  2\sum_{j =1}^m |\beta_j(t)|^2,\;\;  U(\alpha) =  \| Q \|^2_{L^2_x} \sum_{j < k} \frac{ 1}{|\alpha_j - \alpha_k|^2},
	\end{align} 
    are the \emph{kinetic-} and \emph{potential energy} respectively.
    Following the  idea in \cite{KRM,Wu} and \cite{GSW}, the  trajectories  of solutions $(\alpha(t), \beta(t))$ to \eqref{m-body} will provide leading asymptotics of the modulation variables through applying \eqref{inv} to the ground state $Q$.\\[4pt]
	The \emph{center of mass} $M(t) := \sum_{j=1}^m  \alpha_j(t) \displaystyle$ evolves by free Galilean motion. Let us recall (see \cite{GSW}) the  set of \emph{central configurations} and the \emph{non-collision set}
	\begin{align*}
		&\mathcal{X} : = \big \{ x = (x_1, x_2, \dots, x_m) \in \R^{4m}\;|\; { \textstyle   \sum_{j =1}^m  x_j = 0 } \big \},\;\; \mathcal{Y} : = \mathcal{X} \setminus \Delta,\\
		& \Delta : = \big \{ x = (x_1, x_2, \dots, x_m) \in \mathcal{X}\;|\; \exists\; j \neq k: x_j = x_k \big \}.
	\end{align*}
  \;\\
Further we set  $\alpha_{j k} = \alpha_j - \alpha_k$ and the \emph{minimal distance} $a:= \min_{j < k} |\alpha_{jk}|$. For global solutions $\alpha(t)$ to \eqref{m-body} we have in particular (c.f. the argument in \cite{Marchal-Saari})
\[
    \max_{j < k}|\alpha_{j k}(t)| = O(t)\;\text{as}\;t \to +\infty,\;\; \liminf_{t \to +\infty} a(t) > 0.
\]
Moreover, for some $ v \in \mathcal{X}$ (the \emph{limit velocity}) we have $\alpha(t) = v t + O(t^{\f12})$ as $t \to +\infty$.
\\[8pt]
\emph{Expansive orbits.} If $a(t) \to \infty$ as $ t \to +\infty$, we say $\alpha(t)$ is an \emph{expansive orbit}. Let us recall from \cite[Section 1.2]{GSW} the following definition for expansive solutions.\\
\begin{enumerate} \setlength\itemsep{4pt}
    \item We say $(\alpha(t), \beta(t))$ is \underline{\emph{hyperbolic}}, if $v \in \mathcal{Y}$, i.e., $|\alpha_j(t) - \alpha_k(t)| \sim t$ as $ t \to +\infty$ for all $ j \neq k$.
    \item We say $(\alpha(t), \beta(t))$ is \underline{\emph{parabolic}}, if $v = 0$ and $|\alpha_j(t)- \alpha_k(t)| \sim t^{\f12} $ as $ t \to +\infty$ for all $ j \neq k$.
    \item We say $(\alpha(t), \beta(t))$ is \underline{\emph{hyperbolic-parabolic}}, if $v \in \Delta \setminus \{0\}$ and $|\alpha_j(t)- \alpha_k(t)|\sim t^{\f12}$ as $t \to +\infty$ for all $j \neq k$ such that $v_j= v_k$.
    \end{enumerate}
\vspace{8pt} 
\noindent
\emph{Clusters}. In the hyperbolic-parabolic case (3), we define a \emph{cluster partition} via the equivalence relation $ j \sim k$ if and only if $v_j = v_k$, i.e., $|\alpha_j(t) - \alpha_k(t)| \sim t^{\f12}$. Each equivalence class is called a \emph{cluster}. Now assuming there are $l$ \emph{clusters} $\{K_i\}_{i=1}^l$, then we may define the \emph{cluster energy} 
\[
    U_{K_i}(\alpha) = \| Q \|^2_{L^2_x} \sum_{\substack{j < k\\ j,k \in K_i}} \frac{ 1}{|\alpha_j - \alpha_k|^2}, \quad \tilde{U}(\alpha) = \sum_{i=1}^{l} U_{K_i}(\alpha),
\]
and thus $U(\alpha) = \tilde{U}(\alpha) + O(t^{-2})$ as $t \to + \infty$. 
\;\\[5pt]
The following proposition guaranties the existence of hyperbolic-parabolic solutions of \eqref{m-body} for any prescribed limiting velocities.

\begin{prop} \label{prop existence hyp-parab}
Let $v \in \Delta \setminus \{0\}$ and $\{K_i\}$ be the clusters. For each $i$, assume $(\alpha_j^0, \beta_j^0)$, $j \in K_i$ is a parabolic solution to the $|K_i|$-body problem. Then there exists a hyperbolic-parabolic solution $(\alpha, \beta)$ to the $m$-body problem \eqref{m-body} with 
\begin{equation}
    \alpha_j(t)= v_j t+ \alpha_j^0(t)+ o(t^{-1+}) \;\; \text{as} \;\; t \to +\infty.
\end{equation}
\end{prop}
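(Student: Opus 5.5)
We look for the solution as a perturbation of the superposition of the Galilean-translated cluster solutions. We set $\alpha_j(t) = v_j t + \alpha_j^0(t) + r_j(t)$ and $\beta_j(t) = \tfrac12 v_j + \beta_j^0(t) + \tfrac12 \dot r_j(t)$, and seek $r_j$ with $r_j(t) = o(t^{-1+})$ by a backward-in-time fixed point argument on $[T,\infty)$, with $T$ large. Since each $(\alpha_j^0,\beta_j^0)_{j\in K_i}$ solves the $|K_i|$-body problem, and the linear part $v_j t$ is free, \eqref{m-body} becomes a second-order equation for $r$:
\begin{align*}
\tfrac12\ddot r_j = -\|Q\|_{L^2_x}^2\Big[\sum_{k\in K(j),k\neq j}\big(F(\alpha^0_{jk}+r_{jk})-F(\alpha^0_{jk})\big) + \sum_{k\notin K(j)} F\big((v_j-v_k)t+\alpha^0_{jk}+r_{jk}\big)\Big],
\end{align*}
where $F(x)=x|x|^{-4}$ and $K(j)$ is the cluster containing $j$. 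The inter-cluster forcing is $O(t^{-3})$, because $|(v_j-v_k)t|\sim t$ and $\alpha^0=O(t^{1/2})$. The intra-cluster term is linear in $r$ to leading order, with coefficient $DF(\alpha^0_{jk}) = O(|\alpha^0_{jk}|^{-4}) = O(t^{-2})$, using that the parabolic orbits satisfy $|\alpha^0_{jk}|\sim t^{1/2}$.

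We then write $r_j(t) = -\int_t^\infty (s-t)\,2G_j(s,r(s))\,\d s$, i.e.\ we impose $r,\dot r\to0$ at $+\infty$, and set up a contraction in the norm $\sup_{t\ge T} t^{1-\delta}(|r(t)|+t|\dot r(t)|)$. The forcing alone contributes $\int_t^\infty (s-t)s^{-3}\,\d s \sim t^{-1}$, so we expect $r = O(t^{-1})$, consistent with (in fact slightly better than) $o(t^{-1+})$; we keep the loss $\delta>0$ for flexibility.

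The main obstacle is the linear term. Its kernel gives $\int_t^\infty (s-t)\,s^{-2}\,|r(s)|\,\d s$, and with $|r(s)|\lesssim s^{-1+\delta}$ this is of size $C\,t^{-1+\delta}$ with a constant $C$ that is not small. The coefficient $t^{-2}$ is exactly critical for the Euler-type operator $\ddot r \approx A(t)t^{-2} r$, so smallness does not come from $T$ large. The first attempt would be to exploit the weight: the operator norm of $r\mapsto\int_t^\infty (s-t)s^{-2}r(s)\,\d s$ on $t^{-\mu}$-weighted spaces is $\|DF\text{-coefficient}\|/(\mu(\mu-1))$. So choosing the decay exponent in the norm carefully, or working with a slightly faster-decaying ansatz by first subtracting the explicit $O(t^{-1})$ correction solving the forcing, might give a contraction for the remainder. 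If the constant is still too large, one can use the specific structure of parabolic orbits, $\alpha_j^0(t) = t^{2/3}$-type $\to$ here $t^{1/2}$ times a central configuration plus lower order, to diagonalize the linearized Euler system $\ddot r = t^{-2}A r$ and solve it via its explicit power solutions $t^{\mu_\pm}$. Choosing the decaying branch and treating the nonlinear remainder $O(|r|^2 t^{-3})$ and the $t$-dependent corrections of $A$ as perturbations then closes the fixed point on $[T,\infty)$.

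Finally, from $r,\dot r\to0$ we get that $v$ is the limit velocity. Within each cluster $|\alpha_{jk}|\sim t^{1/2}$ persists, since $r=o(1)$, and different clusters separate linearly. Hence the solution is hyperbolic-parabolic with the stated expansion.
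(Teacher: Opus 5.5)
Your route (b) is the paper's argument: with $\tilde\alpha=vt+\alpha^0$ the inter-cluster force is $O(t^{-3})$, one uses $\nabla^2U(\tilde\alpha)=t^{-2}A+O(t^{-2-\epsilon})$, and the Euler-type system $\ddot x=t^{-2}Ax+O(t^{-3})$ is solved mode by mode as in \cite[Lemma~4.4]{GSW}, with the $O(t^{-2-\epsilon})$ correction and the nonlinearity treated perturbatively. (Both you and the paper take for granted that $t^{-1/2}\alpha^0(t)$ converges to a fixed configuration at a polynomial rate; this holds for orbits of the form $b\,t^{1/2}+\dots$ but is not automatic for this inverse-square problem, e.g.\ zero-energy two-body orbits with nonzero angular momentum rotate like $\log t$.) Your preliminary weighted-norm attempt cannot close on its own, because the free kernel gives $t^{-\mu}/(\mu(\mu+1))$ rather than $1/(\mu(\mu-1))$, with $\mu$ pinned near $1$ by the forcing; and in the mode-by-mode step an eigenvalue $\nu>2$ of $A$ forces you to integrate the $t^{\mu_-}$ component forward from $T$ rather than from $+\infty$, while $\nu=2$ produces a $t^{-1}\log t$ term, which is why only $o(t^{-1+})$ and not $O(t^{-1})$ is claimed.
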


\begin{proof}[Sketch of the proof]
Let $\tilde{\alpha}_j(t)= v_jt+ \alpha_j^0(t)$. Then 
\begin{equation}
    \ddot{\tilde{\alpha}}= \nabla \tilde{U}(\tilde{\alpha})= \nabla U(\tilde{\alpha})+ O(t^{-3}).
\end{equation}
For $x= \alpha- \tilde{\alpha}$, by the Taylor formula, we obtain 
\begin{equation}
    \ddot{x}= \nabla U(\alpha)- \nabla U(\tilde{\alpha})+ O(t^{-3})= \nabla^2 U(\tilde{\alpha})x+ O(t^{-3})
\end{equation}
with the bootstrap assumption $x(t)= o(t^{-1+})$. Further, there exists $A \in \R^{4m \times 4m}$ and $\epsilon>0$ such that $\nabla^2 U(\tilde{\alpha})= t^{-2}A+ O(t^{-2-\epsilon})$. Thus we have 
\begin{equation}
    \ddot{x}= \frac{Ax}{t^2}+ O(t^{-3}),
\end{equation}
for which we may use the argument in \cite[Lemma\;4.4]{GSW} to find $x$.
\end{proof}

We refer to \cite[Theorem~3]{Wu2} for a more general result with detailed proof.

\subsection{Statement of the results} Here we state our main results in this article, which is an extension of the construction in \cite[Section 1.3]{GSW} to the case of   hyperbolic-parabolic trajectories.

\begin{Th} \label{main-thm}
Let $(\alpha^\infty, \beta^\infty)$ be a hyperbolic-parabolic solution to \eqref{m-body} and $\lambda_j^\infty>0$ such that for $j \neq k$, $\lambda_j^\infty= \lambda_k^\infty$ whenever $|\alpha_j^\infty(t)- \alpha_k^\infty(t)| \sim t^\f12$ as $t \to +\infty$. Then there exist a solution $u \in C^0( [0, +\infty), H^1(\R^4)) $ to \eqref{H} and $\gamma^\infty(t)$ with 
\begin{equation}
	\bigg \| u(t,\cdot)- \sum_{j=1}^m Q_{\lambda_j^{\infty}} \big( \cdot-\alpha_j^\infty(t) \big) e^{i\gamma_j^\infty(t)+ i\beta_j^\infty(t) \cdot x} \bigg \|_{H^1}= o(t^{-\f12+}) \quad \text{as } t \to +\infty.
\end{equation}
\end{Th}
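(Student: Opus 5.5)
We follow the backward-integration scheme of Krieger--Martel--Rapha\"el \cite{KRM}, as adapted in \cite{Wu} and \cite{GSW}. Fix a sequence $T_n \to +\infty$. For each $n$ we solve \eqref{H} backward from final data
\[
u_n(T_n)=\sum_{j=1}^m Q_{\lambda_j^\infty}(\cdot-\alpha_j^\infty(T_n))e^{i\gamma_j^\infty(T_n)+i\beta_j^\infty(T_n)\cdot x},
\]
where the modulation parameters at $T_n$ may need a finite-dimensional adjustment fixed later by a topological argument. The goal is to prove a uniform bound on $[t_0,T_n]$, with $t_0$ independent of $n$. Weak $H^1$ compactness of $u_n(t_0)$ together with local well-posedness and $L^2$-continuity of the flow then gives the solution $u$. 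Uniform bounds pass to the limit by the usual argument of \cite{KRM}.

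\textbf{Step 1: modulated ansatz.} On $[t_0,T_n]$ we decompose
\[
u(t)=\sum_j (Q_{\lambda_j(t)}+\text{corrections})(x-\alpha_j(t))e^{i\gamma_j(t)+i\beta_j(t)\cdot x}+\varepsilon(t).
\]
The modulation parameters $(\lambda_j,\alpha_j,\beta_j,\gamma_j)$ are fixed by the standard orthogonality conditions with respect to the generalized kernel of the linearized operator around $Q$. Projecting the equation onto these directions gives the modulation ODEs. The nonlocal interaction $\phi_{|Q_k|^2}$ felt by soliton $j$ is Taylor expanded at $\alpha_j$. Its gradient term reproduces the $m$-body force $-\|Q\|^2\sum_{k\neq j}(\alpha_j-\alpha_k)/|\alpha_j-\alpha_k|^4$, up to errors. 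Those errors are $O(|\alpha_{jk}|^{-4})$ within a cluster, i.e. $O(t^{-2})$, and $O(t^{-3})$ between clusters.

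The hypothesis that $\lambda$ is constant within each cluster is what makes the intra-cluster law exactly \eqref{m-body} with a common scaling. Between clusters the interaction is $O(t^{-3})$, so unequal $\lambda$'s there are harmless, as in the hyperbolic case of \cite{GSW}. To push the profile error below the $t^{-2}$ level, we add the explicit corrections $\lambda$-rescaled $T_1$-type profiles solving the linearized equation against the quadratic Taylor term, exactly as in \cite{GSW}. The resulting modulation system is
\[
\dot\alpha_j=2\beta_j+O(\cdot),\qquad \dot\beta_j=F_j(\alpha)+O(t^{-3+})+O(\|\varepsilon\|^2),
\]
with $\dot\lambda_j$ and the phase driven by $O(t^{-2})$-type terms plus $\varepsilon$ contributions.

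\textbf{Step 2: energy estimate for $\varepsilon$.} We use a localized Weinstein-type functional: the sum of the energy, mass terms weighted by $\lambda_j^{-2}$ with cutoffs around each soliton, and momentum terms weighted by $\beta_j$. This functional is coercive modulo the orthogonality directions. Its time derivative is controlled by the profile error, the cutoff commutators and the cubic terms. The cutoff commutators carry factors $\dot\beta_j$, and the cutoffs separate solitons at distance $\gtrsim t^{1/2}$. The target bootstrap is $\|\varepsilon(t)\|_{H^1}\lesssim t^{-1/2+}$, possibly sharper, matching the claimed rate $o(t^{-\f12+})$.

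\textbf{Step 3: closing the parameters (the main obstacle).} We integrate the modulation system backward from $T_n$ and compare with $(\alpha^\infty,\beta^\infty)$. Writing $x=\alpha-\alpha^\infty$, we get as in Proposition~\ref{prop existence hyp-parab}
\[
\ddot x=t^{-2}Ax+O(t^{-3+})+(\text{$\varepsilon$-terms}).
\]
The matrix $A$ comes from the Hessian of the intra-cluster potentials along the parabolic orbits. It may have eigenvalues producing growing modes $t^{\mu}$ with $\mu>0$ in backward time. Along these modes the errors cannot be integrated naively. We therefore choose the final data of $\alpha,\beta$ in the unstable directions by a Brouwer/degree argument, as in \cite[Lemma 4.4]{GSW} and \cite{Wu}. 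The resulting bound is $x=o(t^{-1+})$ in position, hence $\beta-\beta^\infty=o(t^{-2+})$ would be best. Even a weaker $o(t^{-1/2+})$ error in $\beta$ still suffices for the $H^1$ statement.

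Finally, the $\lambda_j$ must stay close to $\lambda_j^\infty$ despite $O(t^{-2})$ forcing, which is integrable. The phase $\gamma_j$ absorbs the rest and defines $\gamma_j^\infty(t)$. The hardest step is this coupled bootstrap. Parabolic clusters interact only at order $t^{-2}$, so the ODE errors are merely borderline integrable against the $t^{-2}A$ dynamics. The $\varepsilon$-induced errors must therefore be shown to be $O(t^{-3+})$ through the orthogonality structure, using $\|\varepsilon\|^2$ quadratic dependence. Combining the bounds gives the $H^1$ estimate of the theorem.
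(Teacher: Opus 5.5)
Your outline follows the general backward-integration scheme, but it misses the step that is actually new for hyperbolic-parabolic orbits, and you misread the role of the hypothesis on the $\lambda_j^\infty$.

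\textbf{The equal-scale hypothesis and the cutoff commutators.} In the mass-critical case $\|Q_\lambda\|_{L^2(\mathbb{R}^4)}=\|Q\|_{L^2}$, so the leading interaction force does not depend on the scales at all (compare $b_j^{(2)}$ with \eqref{m-body}). Equal scales within a cluster are therefore not what produces the $m$-body law; they are needed in your Step 2.

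The localized mass part of the Weinstein functional is $\sum_j(\lambda_j^2+|\beta_j|^2)\int\varphi_j|\varepsilon|^2$, with weights $\lambda_j^2$, not $\lambda_j^{-2}$. Its time derivative contains the commutator terms
\[
\sum_j(\lambda_j^2+|\beta_j|^2)\int\big(\partial_t\varphi_j|\varepsilon|^2+2\nabla\varphi_j\cdot\mathrm{Im}(\nabla\varepsilon\,\overline{\varepsilon})\big).
\]
These carry $\nabla\varphi_j$ and $\partial_t\varphi_j$, not $\dot\beta_j$. Inside a cluster the cutoffs separate only at distance $a\sim t^{1/2}$, so these terms are merely $O(t^{-1/2}\|\varepsilon\|_{H^1}^2)$. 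Integrating such a term back from $T_n$ loses a factor $t^{1/2}$, and the energy bootstrap cannot close.

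The paper's remedy is to regroup by clusters. Within each cluster $K$ one has $\lambda_j^2+|\beta_j|^2=\lambda_K^2+|\beta_K|^2+O(t^{-1/2})$. For $\beta$ this is automatic from parabolic relative motion; for $\lambda$ it is exactly the assumption $\lambda_j^\infty=\lambda_k^\infty$. The commutators then collapse, up to $O(t^{-1}\|\varepsilon\|_{H^1}^2)$, onto the cluster cutoffs $\varphi_K=\sum_{j\in K}\varphi_j$. Their derivatives are $O(t^{-1})$ because clusters separate linearly (Lemma \ref{lem cutoff}). The same regrouping handles the $\beta_j$-weighted momentum terms.

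\textbf{Quantitative closure and parameter set.} Even after this regrouping, your scheme does not close. Terms of size $t^{-1}\|\varepsilon\|_{H^1}^2$ with an $O(1)$ constant integrate, under $\|\varepsilon\|_{H^1}\le t^{-\kappa}$, to $\frac{C}{2\kappa}t^{-2\kappa}$. This beats the coercivity constant only if $\kappa$ is large. That is why the paper builds $R^{(N)}$ to arbitrary order, with interaction error $a^{-N-2}$ (Proposition \ref{prop accuracy of approximate solution}), and bootstraps $\|\varepsilon\|_{H^1}\le t^{-N/4}$ rather than $t^{-1/2+}$.

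Moreover, the modulation estimate \eqref{eq modulation estimate} contains the linear term $\|\varepsilon\|_{H^1}/a^2$, which orthogonality does not remove. With $\|\varepsilon\|_{H^1}\sim t^{-1/2}$, the forcing in your equation for $\ddot x$ is of order $t^{-3/2}$, not the $O(t^{-3+})$ your Step 3 requires.

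In the paper, the modulated parameters are compared not with $P^\infty$ but with an exact solution $P^{(N)}$ of \eqref{eq parameters}. This solution is constructed once at the ODE level (Proposition \ref{prop trajectory}), and that is where the rate $o(t^{-1/2+})$ of the theorem comes from. The differences $P-P^{(N)}$ are $O(t^{-N/8})$. This decay exceeds the exponents of the homogeneous solutions of $\ddot x=t^{-2}Ax$, so backward integration from $g(T_n)=g^{(N)}(T_n)$ closes with small constants. No Brouwer argument is needed, and the final data are simply $R^{(N)}_{g^{(N)}}(T_n)$.

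Finally, the parameters $(\lambda_j,\alpha_j,\beta_j,\gamma_j)$ are too few in the $L^2$-critical setting. The coercivity in Proposition \ref{non-deg-coer-Inv} also requires orthogonality to $|x|^2Q$ and $\rho$. This forces the extra parameters $\mu_j$ (phase $e^{i\mu_j|x|^2}$) and $\delta_j$, together with the functionals $\mathcal{G}_4,\mathcal{G}_5$. Because $|\alpha_j|\sim t\gg|\alpha_{jk}|\sim t^{1/2}$, the products $\mu_j\alpha_j$ and $\delta_j\alpha_j$ are in turn what require the modified admissible class of Definition \ref{Def-adm}.
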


\begin{Rem}
Let $K_1, \cdots, K_l$ be the clusters for $(\alpha^\infty, \beta^\infty)$. Then for any $1 \le i \le l$, we consider 
\begin{equation}
    \sum_{j \in K_i} Q_{\lambda_j^{\infty}} \big( \cdot-\alpha_j^\infty(t) \big) e^{i\gamma_j^\infty(t)+ i\beta_j^\infty(t) \cdot x}
\end{equation}
to be a \emph{soliton cluster}. For each soliton cluster, there exists a Galilean transform such that the solitons within the cluster have vanishing velocities.
\end{Rem}
\vspace{4pt} \noindent
Recall $\Delta = \big \{ x = (x_1, x_2, \dots, x_m) \in \R^{4m} \;|\; \sum_{j=1}^m x_j=0 \;\;\text{and} \;\; \exists\; j \neq k: x_j = x_k \big \}$ is the collision set, and $\Delta \setminus \{0\}$ represents the set of non-parabolic collision configurations. 

\begin{Cor}[Finite-time collision blow-up at multiple points with multiplicity] \label{Cor blow up} Let $m \in \Z_{\geq 2}$ and $v \in \Delta \setminus \{0\}$. Then \eqref{H} has a solution $ u \in C^0((-\infty,0), H^1(\R^4))$ that blows up at $t=0$ such that
\begin{align*}
	\|\nabla u(t) \|_{L^2_x} \sim |t|^{-1} \;\;\;\text{and}\; \;\; |u(t)|^2 \rightharpoonup \sum_{j=1}^m \|Q\|_{L^2_x}^2 \delta_{v_j} \;\; \text{as} \;\; t \to 0^-.
\end{align*}
Moreover, for any $\lambda_1, \cdots, \lambda_m>0$ satisfying $\lambda_j= \lambda_k$ whenever $v_j= v_k$, and any hyperbolic-parabolic solution $(\tilde{\alpha}(t), \tilde{\beta}(t))$ of the $m$-body problem \eqref{m-body} satisfying
\begin{equation}
    \lim_{t \to +\infty} t^{-1} \tilde{\alpha}_j(t)= v_j, \quad \forall j=1, 2 \cdots m,
\end{equation} 
there is a blow-up solution $u$ as above and $\gamma_j \in C^0(- \infty, 0)$, $  \alpha_j(t) = |t| \tilde{\alpha}_j(|t|^{-1})$,  $  \beta_j(t) = -|t|^{-1} \tilde{\beta}_j(|t|^{-1})$, such that when writing 
\begin{equation} \label{eq expansion pseudo}
    u(t, x) =  \sum_{j=1}^m \frac{1}{t^2} Q_{\lambda_j}\Big( \frac{ x- \alpha_j(t) }{t}\Big) e^{i( \gamma_j(t) +  \beta_j(t) \cdot x + \frac{|x|^2}{4t})}+ \varepsilon(t,x),
\end{equation}
we have $\displaystyle \lim_{t \to 0^-}\| \varepsilon(t) \|_{L^2 \cap L^{\f83^-}} = 0$.
\end{Cor}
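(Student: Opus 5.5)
The plan is to obtain the Corollary from Theorem \ref{main-thm} by the pseudo-conformal symmetry, combined with Proposition \ref{prop existence hyp-parab} to produce the orbit. Given $v\in\Delta\setminus\{0\}$ with clusters $\{K_i\}$, I first need a hyperbolic-parabolic orbit with limit velocity $v$. For each cluster $K_i$ I pick a parabolic solution of the $|K_i|$-body problem; if $|K_i|=1$ the trivial constant solution is used. Existence of such solutions is available from \cite{GSW}, for instance via homothetic orbits over a central configuration. Proposition \ref{prop existence hyp-parab} then gives $(\tilde\alpha,\tilde\beta)$ with $t^{-1}\tilde\alpha_j(t)\to v_j$. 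In the second part of the Corollary the orbit is prescribed, so this step is skipped there. I then choose the $\lambda_j$ constant on clusters; for the first part simply take $\lambda_j=1$.

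Next, Theorem \ref{main-thm} yields a solution $w\in C^0([0,\infty),H^1)$ with
\[
w(s)=\sum_j Q_{\lambda_j}(\cdot-\tilde\alpha_j(s))e^{i\tilde\gamma_j(s)+i\tilde\beta_j(s)\cdot x}+\tilde\varepsilon(s),\qquad \|\tilde\varepsilon(s)\|_{H^1}=o(s^{-1/2+}).
\]
I define, for $t<0$,
\[
u(t,x)=\frac{1}{t^2}\,\overline{w}\Big(-\frac1t,-\frac xt\Big)e^{i\frac{|x|^2}{4t}},
\]
possibly composed with a reflection $x\mapsto -x$ or time reversal so that the sign conventions work out. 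This $u$ solves \eqref{H} on $(-\infty,0)$ by the pseudo-conformal symmetry combined with time reversal $u\mapsto\bar u(-t)$. Substituting the expansion and setting $s=|t|^{-1}$, each soliton becomes $t^{-2}Q_{\lambda_j}((x-\alpha_j(t))/t)$ with $\alpha_j(t)=|t|\tilde\alpha_j(|t|^{-1})$. The Galilean phase conjugates to $\beta_j(t)\cdot x$ with $\beta_j=-|t|^{-1}\tilde\beta_j$. I will track the signs carefully; this bookkeeping is routine but error-prone. This gives \eqref{eq expansion pseudo} with $\varepsilon(t,x)=t^{-2}\overline{\tilde\varepsilon}(s,\pm x/t)e^{i|x|^2/4t}$.

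The estimates come next. The $L^2$ norm is invariant under this scaling, so $\|\varepsilon(t)\|_{L^2}=\|\tilde\varepsilon(s)\|_{L^2}\to0$. For $L^p$ with $2<p<8/3$, scaling gives $\|\varepsilon(t)\|_{L^p}=|t|^{-4(1/2-1/p)}\|\tilde\varepsilon(s)\|_{L^p}$. Gagliardo–Nirenberg in $\R^4$ gives $\|\tilde\varepsilon\|_{L^p}\lesssim\|\tilde\varepsilon\|_{H^1}=o(s^{-1/2+})$. The total is $o(|t|^{1/2-\delta-4(1/2-1/p)})$, which tends to zero precisely when $4(1/2-1/p)<1/2$, i.e. $p<8/3$. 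This explains the exponent $\tfrac83^-$.

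For the gradient, $\nabla u$ picks up the term $\tfrac{x}{2t}$ from the quadratic phase. The point $x\approx\alpha_j(t)=|t|\tilde\alpha_j(s)\to v_j$ stays bounded. The soliton part therefore has $\|\nabla\cdot\|_{L^2}\sim|t|^{-1}$, coming from the $t^{-1}$ scaling of $\nabla Q$ and of $\beta_j\sim|t|^{-1}\tilde\beta_j$; the $\tilde\beta_j$ stay bounded. The remainder contributes $|t|^{-1}\|\nabla\tilde\varepsilon\|_{L^2}+\|x\varepsilon/t\|_{L^2}$. The weighted term is the main obstacle, since Theorem \ref{main-thm} gives no spatial localization of $\tilde\varepsilon$.

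To handle it, I would either use the identity $\|\nabla u(t)\|_{L^2}=|t|^{-1}\|(\tfrac{y}{2}+\ldots)\,w\|$ in the original variables, or replace the weight by the energy. Energy conservation with $u$ having mass $m\|Q\|^2>\|Q\|^2$ does not directly control it, so I would instead argue as follows. The lower bound $\|\nabla u\|\gtrsim|t|^{-1}$ holds because the soliton part concentrates at scale $|t|$ and the remainder is small in $L^2$; localized gradients can be bounded below via $\|\nabla f\|\ge$ scale-based estimates on the concentrated mass. The upper bound is obtained by using the Galilean/conformal identity $\|\nabla u(t)\|_{L^2}=\tfrac{1}{2|t|}\|(y+2is\nabla)w(s)\|_{L^2}$ (with $y=x/t$) together with the pseudo-conformal virial law to bound the weighted norm of $w$. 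Finally, the weak convergence $|u|^2\rightharpoonup\sum\|Q\|^2\delta_{v_j}$ follows from the concentration of each rescaled soliton at $\alpha_j(t)\to v_j$ and from $\|\varepsilon\|_{L^2}\to0$.
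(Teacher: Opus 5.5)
Your route is the paper's: build the orbit with Proposition \ref{prop existence hyp-parab} (parabolic cluster orbits from \cite{GSW}, constants for singleton clusters), apply Theorem \ref{main-thm}, then time reversal, a reflection and the pseudo-conformal map. Several parts are fine: the $L^2\cap L^{\frac83-}$ convergence (including your explanation of the exponent $\frac83$), the weak convergence of $|u(t)|^2$, and the lower bound $\|\nabla u(t)\|_{L^2}\gtrsim|t|^{-1}$. A clean proof of the lower bound is Gagliardo--Nirenberg, $\|u\|_{L^p}\lesssim\|u\|_{L^2}^{1-\theta}\|\nabla u\|_{L^2}^{\theta}$ with $\theta=4(\frac12-\frac1p)$, together with $\|u(t)\|_{L^p}\gtrsim|t|^{-\theta}$ from the expansion.

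\textbf{The gap.} The upper bound is not proved, and neither is the claim $u\in C^0((-\infty,0),H^1)$. You flag this obstacle yourself but do not resolve it. Write $s=|t|^{-1}$ and $u(t,x)=t^{-2}w(s,x/|t|)e^{i\frac{|x|^2}{4t}}$. Then exactly
\[
\|\nabla u(t)\|_{L^2}=\|(s\nabla-\tfrac{i}{2}y)w(s)\|_{L^2}=\tfrac12\|(y+2is\nabla)w(s)\|_{L^2},
\]
with no prefactor $|t|^{-1}$. With your prefactor, a bound $\lesssim s$ on the right-hand side would only give $|t|^{-2}$. Consequently $u(t)\in H^1$ if and only if $yw(s)\in L^2$, and the rate requires $\|yw(s)\|_{L^2}\lesssim s$.

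Theorem \ref{main-thm} as stated controls $w$ only in $H^1$. The pseudo-conformal virial law cannot fill this in. It is an identity for solutions in $\Sigma=H^1\cap L^2(|x|^2\mathrm{d}x)$ whose constant is $\|yw(0)\|_{L^2}^2$, so it presupposes exactly the missing weight. It is in fact equivalent to energy conservation for $u$, pulled back to $w$.

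Your reason for discarding energy conservation is also mistaken. Once $u(t)\in H^1$ is known, you have $\|\nabla u(t)\|_{L^2}^2=2\mathcal{E}(u)+\frac{\kappa}{2}\|\nabla\phi_{|u|^2}\|_{L^2}^2$. Hardy--Littlewood--Sobolev bounds the last term by $C\|u(t)\|_{L^{8/3}}^4\lesssim|t|^{-2}$, using the expansion, whatever the mass.

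\textbf{The fix.} The missing input comes from the proof of Theorem \ref{main-thm}, not from its statement. The bootstrap \eqref{eq bootstrap assumption} carries the bound $\|x\varepsilon(t)\|_{L^2}\le t^{-\frac N4+2}$, uniformly in $n$, since the data $R^{(N)}(T_n)$ are exponentially localized. This bound passes to the limit solution. Combined with $|\alpha_j(s)|\lesssim s$, it gives $w(s)\in\Sigma$ and $\|yw(s)\|_{L^2}\lesssim s$. Hence
\[
\|\nabla u(t)\|_{L^2}\le s\|\nabla w(s)\|_{L^2}+\tfrac12\|yw(s)\|_{L^2}\lesssim|t|^{-1}.
\]
$H^1$-continuity of $u$ then follows from persistence of $\Sigma$-regularity under the flow. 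This weighted control is why $\|x\varepsilon\|_{L^2}$ appears in the paper's bootstrap, and the one-line proof of the corollary tacitly relies on it.

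\textbf{Signs.} With the reflected transform above, $\alpha_j(t)=|t|\tilde\alpha_j(|t|^{-1})$ comes with the phase $e^{i|t|^{-1}\tilde\beta_j(|t|^{-1})\cdot x}$, that is, $\beta_j(t)=+|t|^{-1}\tilde\beta_j(|t|^{-1})$. This is the sign that cancels the phase gradient $-v_j/(2|t|)$ of $e^{i\frac{|x|^2}{4t}}$ at $x\approx v_j$. The minus sign you copied from the statement is incompatible with this: with it, $\varepsilon(t)$ would not tend to $0$ in $L^2$ whenever some $v_j\neq0$.
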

\;\\ [-10pt]
The proof of Corollary \ref{Cor blow up} follows from Theorem \ref{main-thm} and the application of the pseudo-conformal transformation. 

\begin{Rem}
The assumption $\sum_j x_j =0$ in the definition of $\Delta$ is without loss of generality by Galilean invariance of both \eqref{m-body} and \eqref{H}.    
\end{Rem}

\begin{Rem} Examples of hyperbolic-parabolic $m$-body trajectories are obtained from Proposition \ref{prop existence hyp-parab} together with \cite[Proposition 1.5]{GSW}. To be precise, for the hyperbolic-parabolic orbits $(\tilde{\alpha}, \tilde{\beta})$ we constructed, there exists $b_j \in \R^4$ such that
\[
    \tilde{\alpha}_j(t) = v_j t + b_j t^{\f12} + o(t^{-1+}), \;\; \tilde{\beta}_j(t)= \frac{v_j}{2}+ o(t^{-\f12+}) \;\;\;\text{as}\;\;t \to +\infty, \;\; \forall \;1 \le j \le m.
\]
Thus, we observe for $(\alpha, \beta)$ in Corollary \ref{Cor blow up}
\begin{equation}
    \alpha_j(t) = v_j + b_j |t|^{\f12} + O(|t|), \;\; \beta_j(t)=- \frac{v_j}{2|t|}+O(|t|^{-\f12}) \;\; \text{as} \;\;\; t \to 0^-, \;\; \forall \; 1 \le j \le m.
\end{equation}
\end{Rem}

\begin{Rem} (i)\;The solutions in Theorem \ref{main-thm} and Corollary \ref{Cor blow up} are \emph{strongly interacting} in the sense of \cite{MR}, i.e. the trajectories of the solitons are perturbed to leading order by the presence of the other solitons. An intricate  question for finite-time blow-up of the $L^2$-critical NLS is whether concentration scenarios 
\begin{align} \label{conc}
    |u(t)|^2 \rightharpoonup \sum_{i =1}^l m_i\delta_{x_i} + |u^*|^2 \;\; \text{as} \;\;t \to T^-,
\end{align}
appear for large masses (c.f. \cite{Mer-Raph} in the weakly interacting regime) and which rates are possible. Here $m_i > 0$, $x_1, \cdots, x_l$ are distinct and $u^* \in L^2$. Corollary \ref{Cor blow up} shows that for \eqref{H}, the asymptotics \eqref{conc} with $u^* = 0$, $l \in \Z_{\geq 2}$ and $m_i = \|Q\|_{L^2}^2 k_i $, $k_i \in \Z_+$ can be realized \emph{at the pseudo-conformal rate} via strong interactions.\\[4pt]
(ii)\;To the authors' knowledge, collision blow-up for the mass-critical NLS \eqref{mass-NLS} is only known by the remarkable result in  \cite{MR}, where the blow-up is at a single point above the pseudo-conformal rate. 
\end{Rem}

\vspace{4pt} \noindent
\emph{Outline}. The article follows the strategy proposed in \cite{KRM}, \cite{Wu} and \cite{GSW}. \\[4pt] 
In Section \ref{sec:gs-properties}, we quote the non-degeneracy and the (finite co-dimension) coercivity properties of the linearized operators from \cite{GSW}. \\[4pt]
In Section \ref{sec:approx}, we give details for the adapted construction of approximate solutions used in \cite{Wu}, \cite{GSW} (see Remark \ref{Rem-after admissibility}), which ensures a small interaction error. \\ [4pt]
This is followed by an analysis in Section \ref{sec:traje} for the choice of the modulation parameters to satisfy the modulation ODE \eqref{eq parameters}.\\[4pt]
In Section \ref{sec:bootstrap}, we reduce the proof to a bootstrap argument by choosing a modulation path with suitable orthogonality conditions. \\ [4pt]
In the last Section \ref{sec:mod}, we then control the modulation part of the bootstrap estimate by the analysis in Section \ref{sec:approx} and Section \ref{sec:traje}, and lastly use coercivity and the orthogonality conditions to control the error in the energy space via the bootstrap estimate.

\section{The ground state linearized operators}\label{sec:gs-properties}

We linearize \eqref{H} at the ground state $Q$ via writing $u(t) = e^{it}(Q + \varepsilon(t))$ and separating $ \varepsilon = \varepsilon_1 + i \varepsilon_2$. This shows the leading operator has the form
\begin{align} \label{lin-op-matrix}
 \begin{pmatrix}
    0 &  -L_-\\
    L_+ & 0
\end{pmatrix}  =  \begin{pmatrix}
    0 &  \Delta - 1\\
    -\Delta + 1  & 0
\end{pmatrix}  + 
\begin{pmatrix}
    0 & - \phi_{Q^2}\\
    \phi_{Q^2} + 2 Q \phi_{(\cdot\;  Q)} & 0
\end{pmatrix}
\end{align}
when acting on $\begin{pmatrix}
    \varepsilon_1\\
    \varepsilon_2
\end{pmatrix}$ and hence we identify 
\begin{align}\label{lin-op}
L_{+} = -\Delta + 1 + \phi_{Q^2} + 2 Q \phi_{(\cdot\;  Q)},\;\;L_{-} = -\Delta + 1 + \phi_{Q^2}. 
\end{align}
In particular $L_{\pm}$ are posed on  $L^2(\R^4)$ with domain $ D = H^2(\R^4)\subset L^2(\R^4)$ and we recall 
\[
\phi_{Q^2} = -  |x|^{-2}\ast Q^2,\;\;\phi_{(\cdot \; Q)} = -   |x|^{-2} \ast (\;\cdot \; Q),\;\; 
\]
thus $L_+$ is a non-local operator. 

Let us now recall important non-degeneracy  and inversion properties of $L_{\pm}$ proved in \cite[Section 2]{GSW}, see also \cite{KLR}, \cite{LenzmannNondegen}. We start with the generalized root space.
\begin{lem}[\cite{GSW}] \label{lem straight}\phantom{a}
\begin{itemize}
    \item[(i)] $L_{\pm}$ are self-adjoint, $ \emph{spec}_{\emph{c}}(L_\pm) = [1, +\infty)$, $ 0 \in \emph{spec}(L_\pm)$ and $L_- \geq 0$.
    
    \item[(ii)] The  following are elements of the generalized root space of the operator \eqref{lin-op-matrix} 
    \begin{align*}
    \begin{pmatrix}
        0\\
        Q
    \end{pmatrix}, \;\begin{pmatrix}
        \Lambda Q\\
        0
    \end{pmatrix}, \; \begin{pmatrix}
        \partial_{x_j} Q\\
        0
    \end{pmatrix},\; \begin{pmatrix}
        0\\
        |x|^2Q
    \end{pmatrix},\; \begin{pmatrix}
        \rho(x)\\
        0
    \end{pmatrix},\;  \begin{pmatrix}
        0\\
        x_jQ
    \end{pmatrix},\;\; j = 1,2,3,4,
    \end{align*}
    where $\Lambda = 2 + x \cdot \nabla$ and 
    \begin{align}
    &L_-Q = L_+(\partial_{x_j}Q) = 0,\; L_+(\Lambda Q) = -2 Q,\; L_-(|x|^2 Q) = -4 \Lambda Q,\\
    & L_-(x_j Q) = -2\partial_{x_j} Q,\; L_+\rho = - |x|^2 Q.
    \end{align}
    \item[(iii)] We have 
    \begin{align*}
    &(Q,\rho) =   \f12 (\Lambda Q,|x|^2 Q) =   -\f12\| x Q\|_{L^2}^2, \;(Q, \Lambda Q) = 0,\\
    &(x Q, \nabla Q ) = - 2 \|Q\|^2_{L^2}. 
    \end{align*}
\end{itemize}
\end{lem}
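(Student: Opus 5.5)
The proof reduces to direct computation with the two identities
\begin{align*}
\Delta(fg) = f\Delta g + 2\nabla f\cdot\nabla g + g\Delta f, \qquad
\Delta\phi_{f} = c_0 f,
\end{align*}
where $c_0 = -4\pi^2$ in $d=4$. The second identity will only be used implicitly, through the scaling structure of the convolution. We first prove (i) and then verify the algebraic relations in (ii) one at a time; (iii) then follows from (ii) by pairing.

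For (i), self-adjointness is immediate. The term $\phi_{Q^2}$ is a bounded real potential, since $Q^2\in L^1\cap L^\infty$ gives $|x|^{-2}\ast Q^2\in L^\infty$ decaying at infinity. The nonlocal term $f\mapsto 2Q\phi_{fQ}$ is symmetric, because $(g, Q\phi_{fQ}) = -\iint \frac{g(x)Q(x)f(y)Q(y)}{|x-y|^2}\,\d x\,\d y$. It is also relatively compact with respect to $-\Delta+1$, by Hardy--Littlewood--Sobolev and the decay of $Q$. Kato--Rellich and Weyl's theorem then give $\mathrm{spec}_c(L_\pm)=[1,\infty)$, using that $\phi_{Q^2}\to 0$ at infinity. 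The equation \eqref{GS-equation} says exactly $L_-Q=0$. Since $Q>0$ is the kernel element of the Schrödinger operator $L_-$, it is the ground state, and therefore $L_-\ge 0$.

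For (ii), each relation comes from differentiating a symmetry of \eqref{GS-equation}:
\begin{enumerate}
\item Translation invariance: differentiate \eqref{GS-equation} in $x_j$. Since $\partial_j\phi_{Q^2} = \phi_{2Q\partial_jQ}$, we get $L_+\partial_jQ=0$.
\item Scaling: $Q_\omega=\omega^2Q(\omega\cdot)$ solves $\Delta Q_\omega - \phi_{Q_\omega^2}Q_\omega = \omega^2 Q_\omega$, using that $\phi$ is homogeneous of the right degree for $L^2$-criticality. Differentiating at $\omega=1$ gives $-L_+\Lambda Q = 2Q$.
\item Galilean and pseudo-conformal phases: conjugating by $e^{i\beta\cdot x/2}$ and $e^{ib|x|^2/4}$, or commuting directly, gives
\begin{align*}
[\Delta, x_j]Q = 2\partial_jQ, \qquad \Delta(|x|^2Q) = |x|^2\Delta Q + 4x\cdot\nabla Q + 8Q.
\end{align*}
Combined with $L_-Q=0$, these yield $L_-(x_jQ) = -2\partial_jQ$ and $L_-(|x|^2Q) = -4(2Q + x\cdot\nabla Q) = -4\Lambda Q$.
\item The element $\rho$: $|x|^2Q\perp\ker L_+$, because $\ker L_+ = \mathrm{span}\{\partial_jQ\}$ by the non-degeneracy result of \cite{LenzmannNondegen}/\cite{KLR} adapted to $d=4$, and $|x|^2Q$ is radial while $\partial_jQ$ is odd. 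By Fredholm theory $\rho := -L_+^{-1}(|x|^2Q)$ is then well defined and radial.
\end{enumerate}
The step requiring a genuinely external input is this non-degeneracy, which we take from \cite{GSW}.

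For (iii), we use self-adjointness and the relations in (ii):
\begin{align*}
(Q,\rho) &= -\tfrac12(L_+\Lambda Q,\rho) = -\tfrac12(\Lambda Q, L_+\rho) = \tfrac12(\Lambda Q,|x|^2Q),\\
(\Lambda Q,|x|^2Q) &= \int (2Q + x\cdot\nabla Q)|x|^2Q \;\d x = 2\|xQ\|_{L^2}^2 - \tfrac{6}{2}\|xQ\|_{L^2}^2 = -\|xQ\|_{L^2}^2,
\end{align*}
where the second line integrates by parts in $d=4$, since $\int |x|^2 x\cdot\nabla(Q^2)/2 = -3\int|x|^2Q^2$. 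The identity $(Q,\Lambda Q)=0$ is $\frac{\d}{\d\omega}\|Q_\omega\|_{L^2}^2=0$, which reflects $L^2$-criticality. Finally $(x_jQ,\partial_jQ) = -\frac12\|Q\|^2$ for each $j$, and summing over the four coordinates gives $(xQ,\nabla Q) = -2\|Q\|_{L^2}^2$.
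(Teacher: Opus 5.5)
Your proposal is correct and is the standard verification behind this lemma, which the paper does not prove itself but imports from \cite{GSW}: the relations in (ii) come from differentiating the translation and scaling symmetries of \eqref{GS-equation} and commuting $\Delta$ with $x_j$ and $|x|^2$, $\rho$ comes from non-degeneracy plus the Fredholm alternative, and (iii) follows by pairing and integrating by parts in $d=4$. One harmless slip: in $d=4$ one has $\Delta \phi_f = 4\pi^2 f$, not $-4\pi^2 f$ (consistent with $\Delta(\kappa\phi_{|u|^2}) = |u|^2$, $\kappa = \frac{1}{4\pi^2}$), but you never use this identity.
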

The following two Propositions will be essential in the next Section \ref{sec:approx} and Section \ref{sec:bootstrap}.
\begin{prop}[\cite{GSW}] \label{non-deg-coer-Inv} \

\begin{enumerate}
    \item We have non-degeneracy of the kernel of $L_{\pm}$, i.e. if $L_{+} u = 0$, then $ u = a \cdot \nabla Q$ for some $ a \in \R^4$. Likewise if $L_{-} u = 0$, then $ u = b Q$ for some $ b \in \R$.
    \item  For all real-valued functions $ v \in H^1$ we have the coercivity 
\begin{equation} \begin{aligned} \label{coer-L+-}
    (L_+v, v) &\ge c \| v \|_{H^1}^2- C(v,Q)^2- C(v,xQ)^2- C(v,|x|^2 Q)^2, \\ 
    (L_-v, v) &\ge c \| v \|_{H^1}^2- C(v,\rho)^2,
\end{aligned} \end{equation}
where  $c, C > 0$ are positive constants (independent of $v$).
\end{enumerate}
\end{prop}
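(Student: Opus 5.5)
Both assertions are attributed to \cite{GSW}, so the natural plan is to rebuild the standard Weinstein/Lenzmann argument adapted to the Hartree nonlinearity in $d=4$. For (1), non-degeneracy of $L_-$ is the easy half. Since $L_-Q=0$ by \eqref{GS-equation} and $Q>0$, the function $Q$ is a positive eigenfunction of the Schr\"odinger operator $-\Delta+1+\phi_{Q^2}$, whose potential $\phi_{Q^2}$ is local and decays. By the Perron--Frobenius/ground state argument, $0$ is then its simple lowest eigenvalue, which also gives $L_-\ge0$.

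For $L_+$ the difficulty is the nonlocal term $2Q\phi_{(\cdot\,Q)}$. I would decompose $u\in\ker L_+$ into spherical harmonics: $u=\sum_{\ell}f_\ell(r)Y_\ell$. The key observation is that the convolution with $|x|^{-2}$ preserves the harmonic decomposition, since it is a radial kernel. So $L_+$ acts on each sector as a radial operator $L_{+,\ell}$, consisting of the local part plus an integral operator $f\mapsto -2Q\,(|x|^{-2}*(fQY_\ell))$ projected back. The sectors are then handled as follows.
\begin{itemize}
\item In the sectors $\ell\ge1$, the potential $|x|^{-2}$ in $d=4$ is (a multiple of) the Newtonian kernel. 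Newton's theorem together with positivity of the $\ell$-th Gegenbauer coefficients of the kernel should show that $L_{+,\ell}$ is monotone in $\ell$ and has a positive ground state for $\ell=1$. Since $L_{+,1}(\partial_rQ)=0$ with $\partial_rQ<0$ of one sign, the kernel is exactly $\partial_rQ$ for $\ell=1$, and $L_{+,\ell}>0$ for $\ell\ge2$.
\item The radial sector $\ell=0$ is where I expect the main obstacle. Here I would follow Lenzmann \cite{LenzmannNondegen} and \cite[Section 4]{KLR}: write $\Delta\Phi=|u|^2$ to turn $L_{+,0}f=0$ into a coupled local ODE system for $(f,\psi)$ with $\Delta\psi=\kappa fQ$. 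Then use the uniqueness of $Q$ \cite{KLR} together with an ODE shooting/Wronskian argument to show that a decaying radial solution must vanish. Alternatively, one can use that the ground state is a nondegenerate radial minimizer, via an implicit-function argument from uniqueness. This will be the hardest step.
\end{itemize}

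For (2) I would argue by the usual contradiction/compactness scheme. First determine the negative directions. Since $L_+$ restricted to radial functions has $(L_+Q,Q)=(-2\int Q^2\phi_{Q^2}\ldots)<0$, it has exactly one negative eigenvalue; the number is bounded by using that $Q$ minimizes the Weinstein functional, whose second variation gives $L_+\ge0$ on $\{Q\}^\perp$ modulo scaling. Combined with (1), this gives $\ker L_+=\mathrm{span}\{\partial_jQ\}$. The $\mathrm{spec}_c=[1,\infty)$ part comes from Weyl's theorem, because $\phi_{Q^2}$ and $Q\phi_{(\cdot Q)}$ are relatively compact. Hence $(L_+v,v)\ge c\|v\|_{L^2}^2$ on the orthogonal complement of $\{Q\}$ (which handles the negative direction) and of $\{\partial_jQ\}$. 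Replacing $\partial_jQ$ by $x_jQ$ is allowed because $(x_jQ,\partial_kQ)=-\tfrac12\delta_{jk}\|Q\|^2\ne0$ by Lemma \ref{lem straight}(iii). The extra condition $(v,|x|^2Q)$ is harmless, since adding orthogonality conditions only helps. For $L_-$, its kernel is spanned by $Q$ and $(Q,\rho)=-\tfrac12\|xQ\|^2\ne0$, so orthogonality to $\rho$ controls the kernel direction.

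The upgrade from $L^2$ to $H^1$ coercivity is standard: write $(Lv,v)=\|\nabla v\|^2+\|v\|^2+(\text{compact form})$ and interpolate between this identity and the $L^2$ bound. The final form with the $-C(\cdot)^2$ terms then follows by decomposing $v$ into its projection onto the finitely many directions plus a remainder satisfying the orthogonality conditions, and absorbing the cross terms by Cauchy--Schwarz. This uses that the matrix of inner products between the test functions $\{Q,xQ,|x|^2Q\}$ and the relevant spectral directions is nondegenerate; I would check that invertibility explicitly using Lemma \ref{lem straight}(iii).
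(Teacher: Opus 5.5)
\textbf{Gap in the $L_+$ coercivity.} The step ``hence $(L_+v,v)\ge c\|v\|_{L^2}^2$ on the orthogonal complement of $\{Q\}$ and of $\{\partial_jQ\}$'' is false in this $L^2$-critical problem. Your $L_-$ part (Perron--Frobenius, then trading $Q$ for $\rho$ via $(Q,\rho)=-\tfrac12\|xQ\|_{L^2}^2\neq0$), the sector-by-sector plan for $\ker L_+$, the $L^2\to H^1$ upgrade and the final projection argument are fine. Here is why that step fails. Since $\Lambda Q$ is radial, $(\Lambda Q,\partial_jQ)=(\Lambda Q,x_jQ)=0$, and $(\Lambda Q,Q)=0$ by Lemma \ref{lem straight}(iii). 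Yet $L_+\Lambda Q=-2Q$ gives $(L_+\Lambda Q,\Lambda Q)=-2(Q,\Lambda Q)=0$. In Weinstein's terms, $(L_+^{-1}Q,Q)=-\tfrac12(\Lambda Q,Q)=0$ is exactly the borderline case. Orthogonality to $Q$, which is not the negative eigenfunction, only yields $(L_+v,v)\ge0$ on $Q^\perp$, with $\Lambda Q$ a degenerate direction. So no spectral gap can be read off from ``one negative eigenvalue plus known kernel''. Concretely, if you drop the last term, the first inequality in \eqref{coer-L+-} fails for $v=\Lambda Q$. The condition $(v,|x|^2Q)=0$ is therefore not a harmless extra orthogonality: it is exactly what removes this direction, through $(\Lambda Q,|x|^2Q)=-\|xQ\|_{L^2}^2\neq0$.

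\textbf{How to repair it.} First obtain $(L_+v,v)\ge0$ for $v\perp Q$ from the single negative eigenvalue together with $(L_+^{-1}Q,Q)\le0$. Then argue by contradiction on $\mathcal V=\{Q,x_jQ,|x|^2Q\}^\perp$. Take $v_n\in\mathcal V$ with $\|v_n\|_{H^1}=1$ and $(L_+v_n,v_n)\to0$. The form $v\mapsto\int\phi_{Q^2}v^2+2\int Qv\,\phi_{(vQ)}$ is compact on $H^1$, so a weak limit $v_*\in\mathcal V\setminus\{0\}$ exists with $(L_+v_*,v_*)=0$. Since $v_*$ minimizes the nonnegative form on $Q^\perp$, it satisfies $L_+v_*=\theta Q$, so part (1) gives $v_*=-\tfrac\theta2\Lambda Q+a\cdot\nabla Q$. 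Now pair with $|x|^2Q$ and $x_kQ$. By parity the pairing matrix is block diagonal, with entries $(\Lambda Q,|x|^2Q)=-\|xQ\|_{L^2}^2$ and $(\partial_jQ,x_kQ)=-\tfrac12\delta_{jk}\|Q\|_{L^2}^2$. This forces $\theta=0$ and $a=0$, a contradiction. This is the invertibility check you mention at the end, but it must replace your spectral-gap claim rather than supplement it.

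\textbf{Radial sector.} Drop the ``implicit-function argument from uniqueness'' alternative. Uniqueness of $Q$ does not by itself imply non-degeneracy, so you do need the ODE argument of \cite[Section 4]{KLR} and \cite{LenzmannNondegen}.
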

\begin{prop}[\cite{GSW}] \label{prop inversion of L_pm}
Let $f$ be a real-valued admissible function of degree $n \in \N$. Then we have the following
\begin{enumerate}
    \item If $\langle f , \nabla Q \rangle = 0$, there exists a real-valued  admissible solution to $ L_+u = f$ of degree $n$.
    \item If $\langle f ,Q \rangle = 0$,  there exists a real-valued admissible solution  to $ L_-u = f$ of degree $n$. Further if $ f$ is radial, then $u$ can be chosen radial.
\end{enumerate}
\end{prop}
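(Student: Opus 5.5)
The statement is quoted from \cite{GSW}, so the plan is to reproduce the argument there, which is a Fredholm-alternative construction followed by a regularity and decay bootstrap. Since admissibility of degree $n$ is not recalled above, I will use it in the form used in \cite{GSW} (and \cite{KRM}, \cite{Wu}): $f$ is smooth, and it and its derivatives decay like a polynomially weighted exponential, say $|\partial^\mu f(x)| \lesssim \langle x\rangle^{n+|\mu|} e^{-|x|}$, possibly with a polynomial structure in $x$ of degree $n$.

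\emph{Step 1: solvability in $L^2$.} By Lemma \ref{lem straight}(i), $L_\pm$ are self-adjoint with essential spectrum $[1,\infty)$. Hence $0$ is an isolated eigenvalue of finite multiplicity, and $L_\pm$ are Fredholm of index zero on $H^2$. By the non-degeneracy in Proposition \ref{non-deg-coer-Inv}(1), $\ker L_+ = \mathrm{span}\{\partial_{x_j}Q\}$ and $\ker L_- = \mathrm{span}\{Q\}$. The orthogonality hypotheses $\langle f,\nabla Q\rangle = 0$, respectively $\langle f,Q\rangle=0$, therefore place $f$ in $(\ker L_\pm)^\perp = \mathrm{Ran}\, L_\pm$. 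This gives a unique $u \in H^2$ orthogonal to the kernel with $L_\pm u = f$, and $u$ is real-valued because the operators are real. In the radial case, $L_-$ commutes with rotations, and $f$ is automatically orthogonal to $Q$ in the radial sector. Uniqueness modulo $Q$, together with averaging over $SO(4)$, then lets us choose $u$ radial, since $Q$ is radial.

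\emph{Step 2: smoothness and decay.} Write the equation as $(-\Delta+1)u = f - \phi_{Q^2}u - 2Q\phi_{(uQ)}$ for $L_+$, and similarly without the last term for $L_-$. Elliptic bootstrapping gives $u\in H^k$ for all $k$, using that $\phi_{Q^2}$ is smooth and bounded and that $\phi_{(uQ)}$ gains regularity. For decay, the nonlocal term $2Q\phi_{(uQ)}$ is harmless: $\phi_{(uQ)}$ is bounded and only decays like $|x|^{-2}$, but it is multiplied by $Q \sim e^{-|x|}$-type decay. So that term is admissible of low degree once $u \in L^2$ is known.

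\emph{Step 3: the long-range potential.} The potential $\phi_{Q^2}(x) = -\|Q\|_{L^2}^2|x|^{-2} + O(|x|^{-4})$ is a long-range perturbation, and this step is the main obstacle. I would write $u = (-\Delta+1+\phi_{Q^2})^{-1}g$, where $g$ is the right-hand side minus the potential term and is already admissible. I would then prove weighted exponential bounds by an Agmon-type estimate, or by comparing with $(-\Delta+1)^{-1}$, whose kernel decays like $|x|^{-3/2}e^{-|x|}$ in $\R^4$.

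The difficulty is that the slowly decaying potential produces polynomial corrections to the rate $e^{-|x|}$. The degree-$n$ bookkeeping must absorb them, which is exactly why admissibility is defined with polynomial weights. The estimate should keep the degree $n$, at least for degree $n$ large enough or up to the convention in \cite{GSW}. Derivative bounds follow by commuting $\partial^\mu$ through the equation, since each commutator produces admissible terms of controlled degree.
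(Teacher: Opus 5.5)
There is a genuine gap: you work with the wrong notion of admissibility, so the degree claim is never proved. In this paper (Definition \ref{Def-adm}), an admissible function of degree $n$ is a finite sum $f(P,x)=\sum_i z_i\,\sigma_i(P)\,\tau_i(x)$. Here $\sigma_i\in S_n$, and $\tau_i\in C^\infty$ satisfies $|\nabla^k\tau_i|\le e^{-c_k|x|}$ for \emph{some} $c_k>0$. The degree $n$ measures homogeneity in the parameters $P=(\alpha,\beta,\lambda,\mu,\delta)$. It is not a polynomial weight in $x$, and no sharp rate $e^{-|x|}$ is required.

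Your Step 1 treats $f$ as a single function of $x$. But the hypothesis $\langle f,\nabla Q\rangle=0$ (resp.\ $\langle f,Q\rangle=0$) holds for the sum, as an identity in $P$, and not for each $\tau_i$. So you cannot simply invert term by term. The missing step is this:
\begin{itemize}
\item Replace each $\tau_i$ by its projection $\Pi\tau_i$ off $\ker L_\pm$. This is still smooth and exponentially decaying, since $\nabla Q$ and $Q$ are.
\item The hypothesis gives $\sum_i z_i\sigma_i(P)\langle\tau_i,\partial_{x_l}Q\rangle\equiv 0$, hence $f=\sum_i z_i\sigma_i\,\Pi\tau_i$.
\item Set $u:=\sum_i z_i\sigma_i\,L_\pm^{-1}\Pi\tau_i$. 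This solves the equation with the \emph{same} coefficients $\sigma_i\in S_n$.
\end{itemize}
That is why degree $n$ is preserved, and trivially so, because $L_\pm$ act only in $x$. Your Step 3 instead ends with the hedge that the estimate ``should keep the degree $n$, at least for $n$ large enough or up to the convention'', which is not a proof under any definition. Real-valuedness follows from the same structure by taking real parts. For the radial case, average the $\tau_i$ over $SO(4)$ and use that $L_-^{-1}$ on $Q^\perp$ commutes with rotations. Note also that in part (2) orthogonality to $Q$ is a hypothesis; it is not automatic in the radial sector, as you wrote.

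Once this is in place, the only analytic input needed is the following. If $\tau$ is smooth with $|\nabla^k\tau|\lesssim e^{-c_k|x|}$ and $\tau\perp\ker L_\pm$, then $L_\pm^{-1}\tau$ has the same property, with possibly smaller rates. Your Steps 1--2, together with a standard Agmon estimate or a comparison with the Bessel kernel of $(-\Delta+1)^{-1}$, give exactly this.

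The ``long-range'' obstacle in Step 3 is not actually there. First, $\phi_{Q^2}=O(|x|^{-2})$ is short range and tends to $0$, so it only costs an arbitrarily small loss in the exponential rate. Second, $2Q\phi_{(uQ)}$ is an exponentially decaying source, since $\phi_{(uQ)}$ is bounded. Third, the definition only asks for $e^{-c_k|x|}$ with some $c_k>0$, not a sharp $\langle x\rangle^{n}e^{-|x|}$ profile, so there are no polynomial corrections to keep track of.
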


\section{Construction of approximate solutions}\label{sec:approx}
\noindent
We let $\alpha= (\alpha_1, \cdots, \alpha_m)$ and similarly for $(\beta$, $\lambda$, $\mu$, $\delta$, $\gamma$), which are (possibly) time dependent parameters . We also denote 
\begin{equation} \label{eq notation} \begin{split}
    &P= (\alpha, \beta, \lambda, \mu, \delta), \quad g=(P,\gamma), \quad g_j = (P_j, \gamma_j) =  (\alpha_j, \beta_j, \lambda_j, \mu_j, \delta_j, \gamma_j),\\[3pt]
    &\alpha_{jk}= \alpha_j- \alpha_k, \quad \beta_{jk}= \beta_j- \beta_k, \quad a= \min_{j \neq k} |\alpha_{jk}|.
\end{split} \end{equation}
For a function $v : \R \times \R^4 \to \C$ we then modulate via the path $g_j =(P_j, \gamma_j)$
\begin{equation} \label{eq action}
    g_j v(t,x)  := \lambda_j^2 v \big(t, \lambda_j (x-\alpha_j) \big) e^{i\gamma_j+ i\beta_j \cdot x+ i \mu_j |x|^2}.
\end{equation}
In particular, we set $u_j(t,x) = g_j v_j(t,x)$ and assume the $v_j$'s are functions of $\alpha_j$, $\beta_j$, $\lambda_j$, $\gamma_j$, $\mu_j$ and the spatial variable. If we set 
\begin{equation}
    u(t,x)= \sum_{j=1}^m u_j(t,x)= \sum_{j=1}^m g_jv_j(t,x)
\end{equation}
then it is, after some calculations, straightforward to observe the expression
\begin{equation}
    i\partial_t u+ \Delta u- \phi_{|u|^2}u= \sum_{j=1}^m E_j(t, y_j) e^{i\gamma_j+ i\beta_j \cdot x+ i \mu_j |x|^2}- \sum_{k \neq j} \phi_{\Re (u_k \overline{u_j})} u,
\end{equation}
where $y_j= \lambda_j (x-\alpha_j)$ and 
\begin{equation} \begin{aligned}
    E_j(t, y_j) &=  \lambda_j^4 \big( \Delta v_j- v_j- \delta_j |y_j|^2 v_j \big) \\
    &\quad - \big( \dot{\mu}_j+ 4\mu_j^2- \lambda_j^4 \delta_j \big) |y_j|^2 v_j- \lambda_j \big( \dot{\beta}_j+ 4\mu_j \beta_j+ (\dot{\mu}_j+ 4\mu_j^2) \alpha_j \big) y_j v_j \\
    &\quad - \lambda_j^2 \big( \dot{\gamma}_j+ (\dot{\beta}_j+ 4\mu_j \beta_j) \cdot \alpha_j+ (\dot{\mu}_j+ 4\mu_j^2) |\alpha_j|^2+ |\beta_j|^2- \lambda_j^2 \big) v_j \\
    &\quad +i\lambda_j \big( \dot{\lambda}_j+ 4\lambda_j \mu_j \big) \Lambda v_j- i\lambda_j^3 \big( \dot{\alpha}_j- 2\beta_j- 4\mu_j \alpha_j \big) \nabla v_j \\
    &\quad+ i \lambda_j^2 \sum_{k=1}^m \Big( \frac{\partial v_j}{\partial \alpha_k} \dot{\alpha}_k+ \frac{\partial v_j}{\partial \beta_k} \dot{\beta}_k+ \frac{\partial v_j}{\partial \lambda_k} \dot{\lambda}_k+ \frac{\partial v_j}{\partial \mu_k} \dot{\mu}_k+ \frac{\partial v_j}{\partial \delta_k} \dot{\delta}_k \Big) \\
    &\quad - \lambda_j^4 \bigg[ \phi_{|v_j|^2}+ \sum_{k \neq j} \Big( \frac{\lambda_k}{\lambda_j} \Big)^2 \phi_{|v_k|^2} \Big( \frac{\lambda_k y_j}{\lambda_j}+ \lambda_k \alpha_{jk} \Big) \bigg] v_j. 
\end{aligned} \end{equation}
\;\\
Since we have
\begin{equation}
    \left( \frac{\lambda_k}{\lambda_j} \right)^2 \phi_{|v_k|^2} \Big( P(t), \frac{\lambda_k y_j}{\lambda_j}+ \lambda_k \alpha_{jk} \Big)= -\frac{1}{\lambda_j^2} \int_{\mathbb{R}^4} \frac{|v_k(P(t),\xi)|^2}{|\alpha_{jk}+ \lambda_j^{-1} y_j- \lambda_k^{-1} \xi|^2} \d \xi,
\end{equation}
we now consider the Taylor expansion
\begin{equation}
    \frac{1}{|\alpha-\zeta|^2}= \sum_{n=1}^N F_n(\alpha,\zeta)+ O \left( \frac{|\zeta|^N}{|\alpha|^{N+2}} \right) \quad \text{as } \zeta \to 0,
\end{equation}
where $F_n(\alpha,\zeta)$ is homogeneous of degree $-n-1$ in $\alpha$ and of degree $n-1$ in $\zeta$. 

Similar to \cite{KRM}, \cite{Wu} and as in \cite[Section 3]{GSW}, let us define the approximation to be
\begin{equation} \begin{aligned}
    \phi_{|v_k|^2}^{(N)}(t,y_j) &:= \sum_{n=1}^N \psi_{|v_k|^2}^{(n)}(t,y_j) := \sum_{n=1}^N -\frac{1}{\lambda_j^2} \int_{\mathbb{R}^4} |v_k(t,\xi)|^2 F_n \big( \alpha_{jk}, \lambda_k^{-1} \xi- \lambda_j^{-1} y_j\big) \d \xi,
\end{aligned} \end{equation}
and then make the ansatz
\[
v_j(t,y_j):= V_j^{(N)}(P(t),y_j):= Q(y_j)+ \delta_j \rho(y_j)+ W_j^{(N)}(P(t), y_j),
\]
where the function $\rho(y_j)$ is as in Lemma \ref{lem straight}.  We define
\begin{equation} \label{eq approximate solution}
    R_g^{(N)}(t,x):= \sum_{j=1}^m R_{j,g}^{(N)}(t,x):= \sum_{j=1}^m g_j V_j^{(N)}(P(t),x). 
\end{equation}
as the approximate solution. Omitting the subscript $g$ of $R^{(N)}$ for convenience, we write
\begin{equation} \label{eq equation of approximate solution} \begin{aligned}
    &i\partial_t R^{(N)}+ \Delta R^{(N)}- \phi_{|R^{(N)}|^2} R^{(N)} \\
    &=\sum_{j=1}^m E_j^{(N)}(t,y_j) e^{i\gamma_j+ i\beta_j \cdot x +i\mu_j|x|^2}- \sum_{k \neq j} \phi_{\mathrm{Re} (R_k^{(N)} \overline{R_j^{(N)}})} R^{(N)} \\
    &+ \sum_{j=1}^m \lambda_j^4 \sum_{k \neq j} \bigg[ \phi_{\left| V_k^{(N)} \right|^2}^{(N)}- \Big( \frac{\lambda_k}{\lambda_j} \Big)^2 \phi_{ \left| V_k^{(N)} \right|^2} \Big( \frac{\lambda_k y_j}{\lambda_j}+ \lambda_k \alpha_{jk} \Big) \bigg] V_j^{(N)} e^{i\gamma_j+ i\beta_j \cdot x+i\mu_j|x|^2},
\end{aligned} \end{equation} 
where now $E_j^{(N)}= \tilde{E}_j^{(N)}+ S_j^{(N)}$ with 
\begin{equation} \label{eq definition of E_j tilde} \begin{aligned} 
    &\tilde{E}_j^{(N)}(t,y_j) \\
    = & \ \lambda_j^4 \Big( \Delta V_j^{(N)}- V_j^{(N)}- \phi_{\left| V_j^{(N)} \right|^2} V_j^{(N)}- \delta_j |y_j|^2 V_j^{(N)} \Big)- \lambda_j^4 \sum_{k \neq j} \phi_{\left| V_k^{(N)} \right|^2}^{(N)} V_j^{(N)} \\
    &+ i\lambda_j^2 D_j^{(N)} \rho- \lambda_j B_j^{(N)} \cdot y_j V_j^{(N)}+ i\lambda_j M_j^{(N)} \Lambda V_j^{(N)} \\
    &+ i\lambda_j^2 \sum_{k=1}^m \bigg( \frac{\partial W_j^{(N)}}{\partial \alpha_k} \cdot \big( 2\beta_k+ 4\mu_k \alpha_k \big)+ \frac{\partial W_j^{(N)}}{\partial \beta_k} \cdot \big( B_k^{(N)}- 4\mu_k \beta_k- \lambda_k^4 \delta_k \alpha_k \big) \\
    &\qquad \qquad \quad \ + \frac{\partial W_j^{(N)}}{\partial \lambda_k} \big( M_k^{(N)}- 4\mu_k \lambda_k \big)+ \frac{\partial W_j^{(N)}}{\partial \mu_k} \big( \lambda_k^4 \delta_k- 4\mu_k^2 \big)+ \frac{\partial W_j^{(N)}}{\partial \delta_k} D_k^{(N)} \bigg) 
\end{aligned} \end{equation} 
and
\begin{equation} \label{eq definition of S_j^N} \begin{aligned}
    &S_j^{(N)}(t,x) \\
    = &- i\lambda_j^3 \big( \dot{\alpha}_j- 2\beta_j- 4\mu_j \alpha_j \big) \nabla V_j^{(N)}+ i\lambda_j \big( \dot{\lambda}_j+ 4\lambda_j \mu_j- M_j^{(N)} \big) \Lambda V_j^{(N)}+ i\lambda_j^2 \big( \dot{\delta}_j- D_j^{(N)} \big) \rho \\
    &- \big( \dot{\mu}_j+ 4\mu_j^2- \lambda_j^4 \delta_j \big) |y_j|^2 V_j^{(N)}- \lambda_j \big( \dot{\beta}_j+ 4\mu_j \beta_j+ (\dot{\mu}_j+ 4\mu_j^2) \alpha_j- B_j^{(N)} \big) \cdot y_j V_j^{(N)} \\
    &- \lambda_j^2 \big( \dot{\gamma}_j+ (\dot{\beta}_j+ 4\mu_j \beta_j) \cdot \alpha_j+ (\dot{\mu}_j+ 4\mu_j^2) |\alpha_j|^2+ |\beta_j|^2- \lambda_j^2 \big)  V_j^{(N)} \\
    &+ i\lambda_j^2 \sum_{k=1}^m \Bigg[ \frac{\partial W_j^{(N)}}{\partial \alpha_k} \cdot \left( \dot{\alpha}_k- 2\beta_k- 4\mu_k \alpha_k \right)+ \frac{\partial W_j^{(N)}}{\partial \beta_k} \cdot \left( \dot{\beta}_k+ 4\mu_k \beta_k+ \lambda_k^4 \delta_k \alpha_k- B_k^{(N)} \right) \\
    &\quad \ + \frac{\partial W_j^{(N)}}{\partial \lambda_k} \big( \dot{\lambda}_k+ 4\mu_k \lambda_k- M_k^{(N)} \big)+ \frac{\partial W_j^{(N)}}{\partial \mu_k} \big( \dot{\mu}_k+ 4\mu_k^2- \lambda_k^4 \delta_k \big)+ \frac{\partial W_j^{(N)}}{\partial \delta_k} \big( \dot{\delta}_k- D_k^{(N)} \big) \Bigg].
\end{aligned} \end{equation}
Here the terms $ \tilde{E}_j^{(N)}$ are controlled by the nonlinear interactions and $S_j^{(N)} $  contains the modulation error. In particular, the approximation will be defined by comparing degrees of (parameter) homogeneity in  $ \tilde{E}_j^{(N)}$, where  $M_j^{(N)}$ are free to choose and $B_j^{(N)}, D_j^{(N)}$ are determined by orthogonality. The following notion of admissible functions will be useful.

\begin{defn}[\textbf{Admissible functions}] \label{Def-adm} \ \par
Recalling \eqref{eq notation}, let $\Omega$ denote the space of non-collision positions:
\begin{equation} \begin{aligned}
    \Omega:= \Big\{ P= (\alpha, \beta, \lambda, \mu, \delta) \in \mathbb{R}^{4m} \times \mathbb{R}^{4m} \times \mathbb{R}_+^m \times \mathbb{R}^m \times \mathbb{R}^m \ \big| \ \alpha_j \neq \alpha_k,\ \forall j \neq k \Big\}.
\end{aligned} \end{equation}

(1) Let $n \in \mathbb{N}$. Define $S_n$ to be the set of functions $\sigma: \Omega \to \mathbb{R}$ that is a finite sum of 
\begin{equation} \label{eq term in S_n}
    c\prod_{j \neq k} |\alpha_j- \alpha_k|^{-q_{jk}} (\alpha_j-\alpha_k)^{p_{jk}} \prod_{j=1}^m (\mu_j \alpha_j)^{s_j} (\delta_j \alpha_j)^{t_j} \beta_j^{k_j} \lambda_j^{l_j} \mu_j^{m_j} \delta_j^{n_j},
\end{equation}
where $c \in \mathbb{R}$, $q_{jk} \in \mathbb{N}$, $p_{jk} \in \N^4$, $|p_{jk}| \le q_{jk}$, $s_j, t_j, k_j \in \mathbb{N}^4$, $l_j \in \mathbb{Z}$, $m_j, n_j \in \mathbb{N}$, and
\begin{equation} \begin{gathered}
    \sum_{j \neq k} (q_{jk}- |p_{jk}|)+ \sum_{j=1}^m \big( 5m_j+ 7n_j+ 3|s_j|+ 5|t_j| \big)= n.
\end{gathered} \end{equation}

(2) We say a function $u: \Omega \times \mathbb{R}^4 \to \mathbb{C}$ is \textbf{admissible} if $u$ is a finite sum of 
\begin{equation}
    z\sigma(\alpha, \beta, \lambda, \mu, \delta) \tau(x),
\end{equation}
where $z \in \mathbb{C}$, $\sigma \in S_n$ for some $n \in \mathbb{N}$ and $\tau \in C^\infty$ satisfies
\begin{equation}
    \big| \nabla^k \tau(x) \big| \le e^{-c_k|x|}, \qquad \forall k \ge 0,\ x \in \mathbb{R}^4.
\end{equation}

If $n$ is the same for all addends, then we say $u$ is admissible of degree $n$. Otherwise, taking $n$ as the minimal one among all addends, we say $u$ is admissible of degree $\ge n$.
\end{defn}

\begin{Rem}\label{Rem-after admissibility} We stress that the above Definition \ref{Def-adm} part (1) is different compared to \cite[Section 3]{GSW}, and is crucial for us to extend the results there to hyperbolic-parabolic trajectories. The condition here is stronger. This is to compensate for the fact that $\alpha_j$ may grow faster than $\alpha_j-\alpha_k$ in the hyperbolic-parabolic case, \emph{as $|\alpha_j- \alpha_k| \sim t^{1/2}$ when $j,k$ are in the same cluster, while $|\alpha_j| \sim t$}. In other words, if we use the definition in \cite[Section 3]{GSW}, then admissible functions may not decay in time.
\end{Rem}

The following lemma states the required decay of admissible functions.

\begin{lem} \label{lem properties of admissible functions-2}
Let $n \in \mathbb{N}$, $u$ be admissible of degree $\ge n$ and $P$ satisfy 
\begin{equation} \label{eq boundedness of g}
    |\alpha| \lesssim a^2, \ |\beta| \lesssim 1, \ \lambda \sim 1, \ |\mu| \lesssim a^{-5}, \ |\delta| \lesssim a^{-7},
\end{equation} 
then there are constants $c_k > 0$ with 
\[
|\nabla^k u(P, x) | \lesssim a^{-n} e^{-c_k|x|},\;\; k \geq 0,\;x \in \R^4.
\]
\end{lem}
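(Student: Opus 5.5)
Since $u$ is a finite sum of terms $z\,\sigma(P)\,\tau(x)$ with $\sigma \in S_{n'}$ for some $n' \ge n$, and $\nabla^k$ acts only on $\tau$, it suffices to prove $|\sigma(P)| \lesssim a^{-n'}$ for a single monomial \eqref{eq term in S_n}. The exponential factor then comes from $|\nabla^k\tau(x)| \le e^{-c_k|x|}$. The bound $a^{-n'} \le a^{-n}$ holds as long as $a \gtrsim 1$, which is the regime of interest (expansive orbits with $a(t)\to\infty$). For bounded $a$, \eqref{eq boundedness of g} simply makes everything bounded, so we may implicitly assume $a \gtrsim 1$. The constant $c_k$ of the sum is then the minimum over the finitely many addends.

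Estimate the factors of a monomial one at a time using \eqref{eq boundedness of g}:
\begin{itemize}
\item Since $|p_{jk}| \le q_{jk}$ and $|\alpha_j-\alpha_k| \ge a$, we get $|\alpha_j-\alpha_k|^{-q_{jk}}|(\alpha_j-\alpha_k)^{p_{jk}}| \le |\alpha_{jk}|^{-(q_{jk}-|p_{jk}|)} \le a^{-(q_{jk}-|p_{jk}|)}$.
\item $|\beta_j^{k_j}| \lesssim 1$.
\item $\lambda_j^{l_j} \sim 1$ for any $l_j\in\Z$, since $\lambda\sim 1$.
\item $|\mu_j|^{m_j} \lesssim a^{-5m_j}$ and $|\delta_j|^{n_j} \lesssim a^{-7n_j}$.
\item The mixed factors are the only place the hypothesis $|\alpha|\lesssim a^2$ enters: $|(\mu_j\alpha_j)^{s_j}| \lesssim (a^{-5}a^2)^{|s_j|} = a^{-3|s_j|}$ and $|(\delta_j\alpha_j)^{t_j}| \lesssim (a^{-7}a^{2})^{|t_j|} = a^{-5|t_j|}$.
\end{itemize}
Multiplying these bounds gives $|\sigma| \lesssim a^{-\sum(q_{jk}-|p_{jk}|)-\sum(5m_j+7n_j+3|s_j|+5|t_j|)} = a^{-n'}$ by the degree identity in Definition \ref{Def-adm}.

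There is no real obstacle; this is bookkeeping. The one point to check is that the weights $3$ and $5$ attached to $s_j$ and $t_j$ in Definition \ref{Def-adm} exactly absorb the growth $|\alpha_j|\lesssim a^2$. This growth is the hyperbolic-parabolic feature $|\alpha_j|\sim t \sim a^2$ discussed in Remark \ref{Rem-after admissibility}, and the check is the computation $5-2=3$, $7-2=5$ done above.
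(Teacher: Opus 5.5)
Your proof is correct and is the monomial-by-monomial estimate the paper intends. The paper states the lemma without proof, and the weights $3$ and $5$ on $s_j,t_j$ in Definition \ref{Def-adm} are chosen exactly so that $|\mu_j\alpha_j|\lesssim a^{-3}$ and $|\delta_j\alpha_j|\lesssim a^{-5}$ under $|\alpha|\lesssim a^2$. Your hedge about ``bounded $a$'' is unnecessary (and not quite the right reason): $a\le|\alpha_1-\alpha_2|\le 2|\alpha|\lesssim a^2$ already forces $a\gtrsim 1$ under \eqref{eq boundedness of g}, and this is what gives $a^{-n'}\lesssim a^{-n}$ for the higher-degree addends.
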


The following proposition gives the construction of approximate solutions. 

\begin{prop} \label{prop construction of approximate bubbles}
Given $m_j^{(n)} \in S_{n+1}$. For $n \ge 1$ and $1 \le j \le m$, there exist $d_j^{(n)}, b_j^{(n)} \in S_{n+1}$ and $T_j^{(n)}$ that is admissible of degree $n+1$ such that: for any $N \ge 1$, if we set
\begin{gather*}
    M_j^{(N)}(P)= \sum_{n=1}^N m_j^{(n)}(P), \;\;W_j^{(N)}(P,y_j)= \sum_{n=1}^N T_j^{(n)}(P,y_j),  \\
    D_j^{(N)}(P)= \sum_{n=1}^N d_j^{(n)}(P), \quad B_j^{(N)}(P)= \sum_{n=1}^N b_j^{(n)}(P),
\end{gather*}
then $\tilde{E}_j^{(N)}$ defined by \eqref{eq definition of E_j tilde} is admissible of degree $\ge N+2$.
\end{prop}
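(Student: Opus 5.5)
We construct the $T_j^{(n)}$, $b_j^{(n)}$, $d_j^{(n)}$ by induction on $n$, solving at each step an equation of the form $L_\pm$ applied to the unknown equals a source term of exact degree $n+1$. The first step is to expand $\tilde E_j^{(N)}$ in homogeneous pieces. Insert $V_j^{(N)} = Q + \delta_j\rho + \sum_{n\le N} T_j^{(n)}$ and $\phi^{(N)}_{|V_k^{(N)}|^2} = \sum_{n\le N}\psi^{(n)}_{|V_k^{(N)}|^2}$ into \eqref{eq definition of E_j tilde}. Then sort every term by its degree in the sense of Definition \ref{Def-adm}. The bookkeeping of degrees is the following:
\begin{itemize}
\item $\delta_j$ has degree $7$ and $\mu_j$ has degree $5$.
\item $\psi^{(n)}$ built from $F_n(\alpha_{jk},\cdot)$ has degree $n+1$, since $F_n$ is homogeneous of degree $-n-1$ in $\alpha_{jk}$ and polynomial in $\xi, y_j$. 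The $\xi$-moments of $|v_k|^2$ are admissible coefficients.
\item $\partial_{\alpha_k}$ raises degree by one.
\item Multiplication by $\beta_k$ or $\lambda_k$ preserves degree.
\item The factors $\mu_k\alpha_k$ and $\lambda_k^4\delta_k\alpha_k$ have degrees $3$ and $5$. This is exactly why $S_n$ tracks $(\mu_j\alpha_j)^{s_j}$ and $(\delta_j\alpha_j)^{t_j}$ with weights $3$ and $5$ rather than counting $\alpha_j$ as degree $-1$ (Remark \ref{Rem-after admissibility}).
\item $\partial_{\beta_k}$, $\partial_{\mu_k}$, $\partial_{\delta_k}$ lower degree by $0$, $5$, $7$. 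They multiply factors $B_k$, $\lambda_k^4\delta_k - 4\mu_k^2$, $D_k$ of degree $\ge 2$, $\ge 7$ and $\ge 2$ respectively.
\end{itemize}
So each composite term has degree at least the sum of the degrees of the pieces it came from, minus at most the stated shifts. One checks the closure properties of $S_n$ under these operations first, as a preliminary lemma. It is also worth noting that $\partial_{\mu_k}$ acting on $(\mu_k\alpha_k)^{s}$ produces $\alpha_k(\mu_k\alpha_k)^{s-1}$. Multiplied by $\lambda^4\delta - 4\mu^2$, this regroups as $(\delta_k\alpha_k)$ or $\mu_k(\mu_k\alpha_k)$ times a lower power, which stays admissible with the right degree.

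The second step removes the leading orders. The degree-$1$ part cancels because $\Delta Q - Q - \phi_{Q^2}Q = 0$. The term $-\delta_j|y|^2Q$ cancels against $\delta_j L_+\rho$, using $L_+\rho = -|x|^2Q$ and linearizing the cubic term. At order $n+1$, the collected equation splits into real and imaginary parts:
\[
L_+\,\mathrm{Re}\,T_j^{(n)} = \mathrm{Re}\,G_j^{(n)} - b_j^{(n)}\cdot y\,Q, \qquad L_-\,\mathrm{Im}\,T_j^{(n)} = \mathrm{Im}\,G_j^{(n)} + \lambda_j^{-2}m_j^{(n)}\Lambda Q + d_j^{(n)}\rho,
\]
up to $\lambda_j$ normalizations. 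Here $G_j^{(n)}$ is admissible of degree $n+1$ and depends only on $T^{(n')}$, $b^{(n')}$, $d^{(n')}$, $m^{(n')}$ with $n' < n$, and on the given $m^{(n)}$.

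The third step chooses the parameters from the solvability conditions. By Proposition \ref{prop inversion of L_pm}, the $L_+$ equation is solvable once the right-hand side is orthogonal to $\nabla Q$. Using $(yQ, \nabla Q) = -2\|Q\|_{L^2}^2\,\mathrm{Id}$ from Lemma \ref{lem straight}, this fixes $b_j^{(n)}$ uniquely, and $b_j^{(n)} \in S_{n+1}$. For $L_-$ we need orthogonality to $Q$. Since $(\Lambda Q, Q) = 0$ and $(\rho, Q) = -\tfrac12\|xQ\|_{L^2}^2 \ne 0$, this fixes $d_j^{(n)} \in S_{n+1}$ for any prescribed $m_j^{(n)}$. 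Proposition \ref{prop inversion of L_pm} then gives admissible $T_j^{(n)}$ of degree $n+1$.

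The last step handles the remainder and independence of $N$. Truncating at $N$ leaves only terms of degree $\ge N+2$. These are products of pieces whose degrees sum beyond $N+1$, together with the tail $\psi^{(n)}$, $n > N$, which is omitted and does not enter $\tilde E$. Since each $T_j^{(n)}$ is determined independently of $N$, the construction is consistent for every $N$.

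The main obstacle is the degree-counting closure in the first step. The terms $\partial_{\alpha_k}W\cdot 4\mu_k\alpha_k$ and $\partial_{\beta_k}W\cdot\lambda_k^4\delta_k\alpha_k$ carry an undifferenced $\alpha_k$, which grows like $t$. One must verify that these terms land in $S_{\ge n+2}$ with the modified weights and are not merely formally of higher degree. I would check this carefully term by term before running the induction.
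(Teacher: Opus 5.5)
Your scheme is the paper's: induction on $N$, an $L_+$ equation for $\mathrm{Re}\,T_j^{(n)}$ and an $L_-$ equation for $\mathrm{Im}\,T_j^{(n)}$, with $b_j^{(n)}$ and $d_j^{(n)}$ fixed by the solvability conditions via invertibility of $(yQ,\nabla Q)$ and $(\rho,Q)\neq 0$, followed by Proposition~\ref{prop inversion of L_pm}. The gap is in the degree closure, and it is not where you placed it. The terms $\partial_{\alpha_k}W_j\cdot 4\mu_k\alpha_k$ and $\partial_{\beta_k}W_j\cdot \lambda_k^4\delta_k\alpha_k$ are harmless by design of $S_n$; they raise the degree by at least $4$ and by exactly $5$. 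The dangerous term is $i\lambda_j^2\,\partial_{\delta_k}W_j^{(N)}\,D_k^{(N)}$ in \eqref{eq definition of E_j tilde}. By your own table it shifts the degree by $-7+\deg D_k$, i.e.\ by as much as $-5$ if all you know is $D_k\in S_{\ge 2}$. And $W_j$ does depend on $\delta$: for instance $-\lambda_j^4\delta_j|y_j|^2T_j^{(1)}$ and $-\lambda_j^4\sum_{k\neq j}\psi^{(1)}_{Q^2,k}\,\delta_j\rho$ are degree-$9$ sources. So $\partial_{\delta_k}T_j^{(n)}\,d_k^{(n')}$ has degree $n+n'-5$. Consequently your claim that $G_j^{(n)}$ involves only data of index $<n$ is false, and adding $T_j^{(N+1)}$ re-creates terms of degree $<N+3$ in $\tilde E_j^{(N+1)}$. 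The induction is triangular only if $D_k^{(N)}$ has no terms of degree below $8$, i.e.\ $d_k^{(1)}=\dots=d_k^{(6)}=0$; then the shift is $\ge +1$.

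This vanishing is not automatic. A nonzero $m_j^{(1)}$ gives $\mathrm{Im}\,T_j^{(1)}=-\tfrac14\lambda_j^{-3}m_j^{(1)}|y_j|^2Q$. Since $L_+\rho=-|x|^2Q$, this produces a $\rho$-component in $\mathrm{Re}\,T_j^{(2)}$. Through $\partial_{\alpha_k}T_j^{(2)}\cdot 2\beta_k$ that becomes a $\rho$-component in $\mathrm{Im}\,\hat E_j^{(2)}$, hence a generically nonzero $d_j^{(3)}\in S_4$. Such a $D_j$ would also contradict the weight $7$ of $\delta_j$, since $\dot\delta_j=D_j$. The missing input is supplied by Section~\ref{sec:traje} for the choice actually used ($m_j^{(n)}=0$ for $n\neq 2$, $m_j^{(2)}=h$). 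There $d_j^{(1)},\dots,d_j^{(6)}$ vanish and the first nonzero term is $d_j^{(7)}\in S_8$. This is not circular, because $W_j^{(N)}$ is still independent of $\delta$ for $N\le 6$. Your induction must be run with this vanishing built in.

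Two smaller points. First, your regrouping works for $\partial_{\mu_k}$ but not for $\partial_{\delta_k}$ acting on $(\delta_k\alpha_k)^{t_k}$. That leaves a bare $\alpha_k$ multiplied by $D_k$, which carries no $\delta_k$ or $\mu_k$ to absorb it, so $S_n$ must be enlarged. One option is a bare $\alpha_k$ of weight $-2$, which is compatible with Lemma~\ref{lem properties of admissible functions-2} because $|\alpha|\lesssim a^2$. Second, the part cancelled by the ground state equation has degree $0$, not $1$. Also $(y_iQ,\partial_lQ)=-\tfrac12\|Q\|_{L^2}^2\delta_{il}$; the value $-2\|Q\|_{L^2}^2$ in Lemma~\ref{lem straight} is its trace. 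Only invertibility is used, so this does not affect the argument.
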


\begin{proof} The proof follows the identical procedure as in \cite[Proposition\;3.5]{GSW} with the modified Definition \ref{Def-adm} for admissible functions. 

Let us proceed by induction over $N$. Writing $T_j^{(n)}= X_j^{(n)}+ i Y_j^{(n)}$, we have
\begin{equation}  \begin{aligned}
    \tilde{E}_j^{(N+1)}- \tilde{E}_j^{(N)}= &- \lambda_j^4 \big( L_+ X_j^{(N+1)}+ iL_- Y_j^{(N+1)} \big)- \lambda_j^4 \sum_{k \neq j} \psi_{Q^2,k}^{(N+1)} Q \\
    &+ i\lambda_j^2 d_j^{(N+1)} \rho- \lambda_j b_j^{(N+1)} y_j Q+ i\lambda_j m_j^{(N+1)} \Lambda Q+ error,
\end{aligned} \end{equation}
where $error$ is (by induction) admissible of degree $\ge N+3$. For this, the key observation is that the terms in \eqref{eq definition of E_j tilde} do not depend solely on $\alpha_j$ but rather on the terms
\[
\frac{(\alpha_j - \alpha_k)^{p_{jk}}}{|\alpha_j - \alpha_k|^{q_{jk}}}\;(k \neq j, \; |p_{jk}| \le q_{jk}),\;\;\mu_j \alpha_j,\;\;\delta_j \alpha_j.\;\;
\]
which justifies the expression \eqref{eq term in S_n} given in Definition \ref{Def-adm}. Thus, in order to continue, it suffices to solve
\begin{equation} \label{this-1} 
    \left\{ \begin{aligned}
        &L_+ X_j^{(N+1)}= -\lambda_j^{-3} b_j^{(N+1)} \cdot y_j Q- \sum_{k \neq j} \psi_{Q^2,k}^{(N+1)} Q+ \lambda_j^{-4} \mathrm{Re} \ \hat{E}_j^{(N)}, \\
        &L_- Y_j^{(N+1)}= \lambda_j^{-2} d_j^{(N+1)} \rho+ \lambda_j^{-3} m_j^{(N+1)} \Lambda Q+ \lambda_j^{-4} \mathrm{Im} \ \hat{E}_j^{(N)},
    \end{aligned} \right.
\end{equation}
where $\hat{E}_j^{(N)}$ are the terms in $\tilde{E}_j^{(N)}$ of degree $N+2$. The existence of $X_j^{(N+1)}$ and $Y_j^{(N+1)}$ hence follows from Proposition \ref{prop inversion of L_pm} if
\begin{equation} \label{this-2}
    \left\{ \begin{aligned}
        & \Big( -\lambda_j^{-3} b_j^{(N+1)} \cdot y_j Q- \sum_{k \neq j} \psi_{Q^2,k}^{(N+1)} Q+ \lambda_j^{-4} \mathrm{Re} \ \hat{E}_j^{(N)}, \nabla Q \Big)=0, \\
        &\Big( \lambda_j^{-2} d_j^{(N+1)} \rho+ \lambda_j^{-3} m_j^{(N+1)} \Lambda Q+ \lambda_j^{-4} \mathrm{Im} \ \hat{E}_j^{(N)}, Q \Big)=0.
    \end{aligned} \right.
\end{equation}
Such $b_j^{(N+1)}$ and $d_j^{(N+1)}$ now always exist because $(y_j Q, \nabla Q)$ is invertible and $(\rho, Q) \neq 0$. 
\end{proof}

We conclude the section by a suitable estimate for the interaction error, which verifies the accuracy of approximate solutions we constructed above.
\begin{prop} \label{prop accuracy of approximate solution}
Let $V_j^{(N)}$ be as in Proposition \ref{prop construction of approximate bubbles}. For $R_g^{(N)}$ defined by \eqref{eq approximate solution}, let 
\begin{equation} \label{eq definition of Psi}
    \Psi^{(N)}= i\partial_t R_g^{(N)}+ \Delta R_g^{(N)}- \phi_{|R_g^{(N)}|^2} R_g^{(N)}- \sum_{j=1}^m S_j^{(N)} e^{i\gamma_j+ i\beta_j \cdot x+ i\mu|x|^2}.     
\end{equation}
If we assume \eqref{eq boundedness of g}, then there exist constants $ c , C > 0$  such that
\begin{equation} \label{eq estimate of Psi}
    |\Psi^{(N)}(t,x)| \le \frac{C}{a^{N+2}(t)}  \max \limits_{j = 1, \dots,m} e^{-c|x-\alpha_j(t)|},\;\; x \in \R^4.
\end{equation} 
\end{prop}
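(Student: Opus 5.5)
Starting from \eqref{eq equation of approximate solution} and the definition \eqref{eq definition of Psi}, we have $E_j^{(N)}=\tilde E_j^{(N)}+S_j^{(N)}$, so $\Psi^{(N)}$ splits into three contributions:
\begin{equation*}
\Psi^{(N)} = \sum_{j} \tilde E_j^{(N)}(t,y_j)e^{i\gamma_j+i\beta_j\cdot x+i\mu_j|x|^2} - \sum_{k\neq j}\phi_{\mathrm{Re}(R_k^{(N)}\overline{R_j^{(N)}})}R^{(N)} + \mathcal{T}^{(N)},
\end{equation*}
where $\mathcal{T}^{(N)}$ is the last line of \eqref{eq equation of approximate solution}, i.e. the Taylor remainder of the interaction potential. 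We estimate each piece separately under \eqref{eq boundedness of g}.

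\emph{First term.} By Proposition \ref{prop construction of approximate bubbles}, $\tilde E_j^{(N)}$ is admissible of degree $\ge N+2$. Lemma \ref{lem properties of admissible functions-2} then gives $|\tilde E_j^{(N)}(P,y_j)|\lesssim a^{-N-2}e^{-c|y_j|}$. Since $\lambda_j\sim 1$, we have $|y_j|\sim|x-\alpha_j|$, and this piece has the required form.

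\emph{Second term (cross terms).} Each $V_k^{(N)}$ is bounded by $e^{-c|y|}$, again by Lemma \ref{lem properties of admissible functions-2} with $n=0$. Hence $|R_k\overline{R_j}|\lesssim e^{-c(|x-\alpha_j|+|x-\alpha_k|)}\le e^{-\frac c2|\alpha_{jk}|}e^{-\frac c2(|x-\alpha_j|+|x-\alpha_k|)}$. Its $L^1$ and $L^\infty$ norms are $\lesssim e^{-c a/2}$, so $\|\phi_{\mathrm{Re}(R_k\overline{R_j})}\|_{L^\infty}\lesssim e^{-ca/2}$, because $|x|^{-2}\in L^1+L^\infty$. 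Multiplying by $|R^{(N)}|\lesssim\max_j e^{-c|x-\alpha_j|}$ gives a bound $e^{-ca/2}\max_j e^{-c|x-\alpha_j|}\lesssim a^{-N-2}\max_j e^{-c|x-\alpha_j|}$.

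\emph{Third term (Taylor remainder).} This is the main step, and I would handle it by splitting space. The integrand difference is
\begin{equation*}
\lambda_j^{-2}\int|V_k^{(N)}(\xi)|^2\Big[|\alpha_{jk}-\zeta|^{-2}-\sum_{n=1}^N F_n(\alpha_{jk},\zeta)\Big]\d\xi,\qquad \zeta=\lambda_k^{-1}\xi-\lambda_j^{-1}y_j .
\end{equation*}

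\emph{Near region} ($|y_j|\le a/4$ and $|\xi|\le a/4$). Here $|\zeta|\le a/2\le|\alpha_{jk}|/2$. The Taylor remainder is $O(|\zeta|^N|\alpha_{jk}|^{-N-2})\lesssim (1+|y_j|+|\xi|)^N a^{-N-2}$. The polynomial growth is absorbed by $e^{-c|\xi|}$ from $|V_k|^2$ after integrating in $\xi$, and by $e^{-c|y_j|}$ from $V_j^{(N)}$ after slightly decreasing $c$.

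\emph{Far region} ($|\xi|>a/4$). Bound the exact term by $\lambda_j^{-2}\int|V_k|^2|\cdot|^{-2}\lesssim 1$ (using that $V_k$ is bounded and exponentially decaying). Bound the Taylor polynomial by $\sum_n|\zeta|^{n-1}a^{-n-1}$. Both are multiplied by $e^{-c|\xi|}\le e^{-ca/8}e^{-c|\xi|/2}$, giving an exponentially small contribution.

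\emph{Far region} ($|y_j|>a/4$). The prefactor $V_j^{(N)}(y_j)$ provides $e^{-c|y_j|}\le e^{-ca/8}e^{-c|y_j|/2}$. Meanwhile the bracket grows at most polynomially in $|y_j|$: $\|\phi_{|V_k|^2}\|_\infty\lesssim1$ and $|F_n|\lesssim a^{-n-1}|\zeta|^{n-1}$ with $a\gtrsim1$. This again yields $e^{-ca/16}e^{-c|y_j|/4}$.

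Collecting the three terms, using $\lambda\sim1$ and $|y_j|\sim|x-\alpha_j|$, and taking $c$ to be the smallest constant appearing, gives \eqref{eq estimate of Psi}. Throughout, the hypothesis $|\alpha|\lesssim a^2$, $|\mu|\lesssim a^{-5}$, $|\delta|\lesssim a^{-7}$ enters only through Lemma \ref{lem properties of admissible functions-2}. The oscillating phase factors have modulus one and therefore play no role.
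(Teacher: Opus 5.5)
Your argument is correct and is essentially the standard proof the paper defers to in \cite[Section 3]{GSW}: $\tilde E_j^{(N)}$ is handled by Proposition~\ref{prop construction of approximate bubbles} and Lemma~\ref{lem properties of admissible functions-2}, the cross terms $\phi_{\mathrm{Re}(R_k\overline{R_j})}R$ are $O(e^{-ca})$, and the Taylor remainder of the interaction potential is split into a near region (bound $|\zeta|^N|\alpha_{jk}|^{-N-2}$) and a far region where exponential decay of $V_j^{(N)}$ or $V_k^{(N)}$ gives $e^{-ca}$-smallness. One small fix: since only $\lambda\sim 1$ is assumed (not $\lambda\ge 1$), take the near region to be $|y_j|,|\xi|\le \epsilon a$ with $\epsilon\le \frac14\min_k\lambda_k$, so that $|\zeta|\le\lambda_k^{-1}|\xi|+\lambda_j^{-1}|y_j|\le |\alpha_{jk}|/2$ genuinely holds.
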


The proof is as given in \cite{GSW} and we refer to \cite[Section 3]{GSW} for details.

\section{Trajectories}\label{sec:traje}

We now want to solve the following modulation equations, which by definition is the condition under which $S_j^{(N)} =0$ is obtained.
\;\\
\begin{equation} \label{eq parameters}
    \left\{ \begin{aligned}
        &\dot{\alpha}_j- 2\beta_j- 4\mu_j \alpha_j=0, \\
        &\dot{\beta}_j+ 4\mu_j \beta_j+ \lambda_j^4 \delta_j \alpha_j- B_j^{(N)}=0, \\
        &\dot{\lambda}_j+ 4\lambda_j \mu_j- M_j^{(N)}=0, \\
        &\dot{\mu}_j+ 4\mu_j^2- \lambda_j^4 \delta_j=0, \\
        &\dot{\delta}_j- D_j^{(N)}=0.
    \end{aligned} \right.
\end{equation}

Let us start by setting $m_j^{(n)}=0$ for $n \neq 2$ and
\begin{equation}
    m_j^{(2)}= -\frac{2\| Q \|_{L^2}^2}{\lambda_j} \sum_{k \neq j} \frac{\alpha_{jk} \cdot \beta_{jk}}{|\alpha_{jk}|^4}.
    \end{equation}
We will show we can take
\begin{equation} \label{eq first few terms b}
    b_j^{(1)}= b_j^{(3)}= b_j^{(4)}=0, \quad b_j^{(2)}= -\| Q \|_{L^2}^2 \sum_{k \neq j} \frac{\alpha_{jk}}{|\alpha_{jk}|^4} 
\end{equation}
and 
\begin{equation} \label{eq first few terms d}
    d_j^{(1)}= d_j^{(2)}= d_j^{(3)}= d_j^{(4)}= d_j^{(5)}= d_j^{(6)}= 0. 
\end{equation}
We will also give an explicit expression of $d_j^{(7)}$. For convenience, we let 
\begin{equation} \label{eq f}
    f= \frac{1}{2} \sum_{k \neq j} \psi_{Q^2,k}^{(1)}= -\frac{\| Q \|_{L^2}^2}{2\lambda_j^2} \Big( \sum_{k \neq j} \frac{1}{|\alpha_{jk}|^2} \Big), \quad h= -\lambda_j \sum_{k=1}^m \frac{\partial f}{\partial \alpha_k} \cdot 2\beta_k= m_j^{(2)}. 
\end{equation}
Note that $f$ only depends on $\alpha$ and $\lambda_j$.  Recall from the proof of Proposition \ref{prop construction of approximate bubbles} (c.f. \cite[Proposition\;3.5]{GSW}) that we have
\begin{equation} 
    \left\{ \begin{aligned}
        &L_+ \re\ T_j^{(N+1)}= -\lambda_j^{-3} b_j^{(N+1)} \cdot y_j Q- \sum_{k \neq j} \psi_{Q^2,k}^{(N+1)} Q+ \lambda_j^{-4} \mathrm{Re} \ \hat{E}_j^{(N)}, \\
        &L_- \imm \ T_j^{(N+1)}= \lambda_j^{-2} d_j^{(N+1)} \rho+ \lambda_j^{-3} m_j^{(N+1)} \Lambda Q+ \lambda_j^{-4} \mathrm{Im} \ \hat{E}_j^{(N)},
    \end{aligned} \right.
\end{equation}
and
\begin{equation} 
    \left\{ \begin{aligned}
        & \Big( -\lambda_j^{-3} b_j^{(N+1)} \cdot y_j Q- \sum_{k \neq j} \psi_{Q^2,k}^{(N+1)} Q+ \lambda_j^{-4} \mathrm{Re} \ \hat{E}_j^{(N)}, \nabla Q \Big)=0, \\
        &\Big( \lambda_j^{-2} d_j^{(N+1)} \rho+ \lambda_j^{-3} m_j^{(N+1)} \Lambda Q+ \lambda_j^{-4} \mathrm{Im} \ \hat{E}_j^{(N)}, Q \Big)=0.
    \end{aligned} \right.
\end{equation}

Since $\hat{E}_j^{(0)}=0$ and $\big( \sum \limits_{k \neq j} \psi_{Q^2,k}^{(1)} Q, \nabla Q \big)=0$, we take
\begin{equation}
    b_j^{(1)}= d_j^{(1)}=0, \quad T_j^{(1)}= f \Lambda Q.
\end{equation}

Then we have 
\begin{equation}
    \hat{E}_j^{(1)}= i\lambda_j^2 \sum_{k=1}^m \frac{\partial T_j^{(1)}}{\partial \alpha_k} \cdot 2\beta_k= -i\lambda_j h \Lambda Q.
\end{equation}
Since $\re\ \hat{E}_j^{(1)}=0$, $\imm \big( E_j^{(1)}, Q \big)=0$ and by the choice of $m_j^{(2)}$, we may take
\begin{equation}
    b_j^{(2)}= -\| Q \|_{L^2}^2 \sum_{k \neq j} \frac{\alpha_{jk}}{|\alpha_{jk}|^4}, \quad d_j^{(2)}=0, \quad T_j^{(2)}=0. 
\end{equation}

Then we have 
\begin{align}
    \hat{E}_j^{(2)} &= -\lambda_j^4 \big( 2\phi_{QT_j^{(1)}} T_j^{(1)}+ \phi_{|T_j^{(1)}|^2} Q \big)- \lambda_j^4 \sum_{k \neq j} \Big( \psi_{Q^2,k}^{(1)} T_j^{(1)}+ 2\psi_{Q T_k^{(1)}}^{(1)} Q \Big) \\
    &= -\lambda_j^4 \big( 2\phi_{QT_j^{(1)}} T_j^{(1)}+ \phi_{|T_j^{(1)}|^2} Q \big)- \lambda_j^4 \sum_{k \neq j} \psi_{Q^2,k}^{(1)} T_j^{(1)} \\
    &= -\lambda_j^4 f^2 \Big( 2\phi_{Q \Lambda Q} \Lambda Q+ \phi_{(\Lambda Q)^2} Q+ 2\Lambda Q \Big).
\end{align}
By parity, we take  
\begin{equation}
    b_j^{(3)}= d_j^{(3)}=0, \quad T_j^{(3)}= T_j^{(3)}(\alpha, \lambda) \text{ real valued}.
\end{equation}
Moreover, since 
\begin{equation} \label{eq L+ Lambda2Q}
    -\frac{1}{2} L_+ (\Lambda \Lambda Q- 2\Lambda Q)= 2\phi_{Q \Lambda Q} \Lambda Q+ \phi_{(\Lambda Q)^2} Q+ 2\Lambda Q,
\end{equation} 
we have
\begin{equation}
    T_j^{(3)} = \frac{f^2}{2} (\Lambda \Lambda Q- 2\Lambda Q)- L_+^{-1} \sum_{k \neq j} \psi_{Q^2,k}^{(3)} Q. 
\end{equation}

For two functions $u$ and $v$, we write $u \equiv v$ if $u-v$ is an even function with exponential decay, and is $L^2$-orthogonal to all radial functions with exponential decay. By the proof of Proposition \ref{prop construction of approximate bubbles}, if we only want to determine the value of $b_j^{(n)}$ and $d_j^{(n)}$, then we can do calculations up to $\equiv$. 

By \cite[Lemma\;4.2]{GSW}, we have $\psi_{Q^2,k}^{(3)} Q \equiv 0$, and thus $L_+^{-1} \psi_{Q^2,k}^{(3)} Q \equiv 0$. This means
\begin{equation}
    T_j^{(3)} \equiv \frac{f^2}{2} (\Lambda \Lambda Q- 2\Lambda Q).
\end{equation}

Then we have 
\begin{equation}
    \re \ \hat{E}_j^{(3)}= -\lambda_j^4 \sum_{k \neq j} \Big( \psi_{Q^2,k}^{(2)} T_j^{(1)}+ 2\psi_{Q T_k^{(1)}}^{(2)} Q \Big)- \lambda_j b_j^{(2)} y_j T_j^{(1)}= 0
\end{equation}
and
\begin{equation} \begin{aligned}
    \imm \ \hat{E}_j^{(3)} &= \lambda_j m_j^{(2)} \Lambda T_j^{(1)}+ \lambda_j^2 \sum_{k=1}^m \bigg( \frac{\partial T_j^{(3)}}{\partial \alpha_k} \cdot 2\beta_k+ \frac{\partial T_j^{(1)}}{\partial \lambda_k} m_k^{(2)}\bigg) \\
    &= \lambda_j m_j^{(2)} \Lambda T_j^{(1)}+ \lambda_j^2 \frac{\partial T_j^{(1)}}{\partial \lambda_j} m_j^{(2)} + \lambda_j^2 \sum_{k=1}^m \frac{\partial T_j^{(3)}}{\partial \alpha_k} \cdot 2\beta_k \\
    &\equiv \lambda_j fh \Lambda \Lambda Q- 2\lambda_j fh \Lambda Q- \lambda_j fh (\Lambda \Lambda Q- \Lambda Q)= 0.
\end{aligned} \end{equation}
Since $\nabla \psi_{Q^2,k}^{(4)} \equiv 0$ in view of \cite[Lemma\;4.2]{GSW}, we take 
\begin{equation} \begin{gathered}
    b_j^{(4)}= d_j^{(4)}=0, \quad T_j^{(4)} \equiv - L_+^{-1}\sum_{k \neq j} \psi_{Q^2,k}^{(4)} Q.
\end{gathered} \end{equation}

Then we have
\begin{align}
    \imm \ \hat{E}_j^{(4)} &= \lambda_j^2 \sum_{k=1}^m \bigg( \frac{\partial \re \ T_j^{(4)}}{\partial \alpha_k} \cdot 2\beta_k+ \frac{\partial T_j^{(1)}}{\partial \alpha_k} \cdot 4\mu_k \alpha_k \bigg) \\
    &= \lambda_j^2 \sum_{k=1}^m \frac{\partial \re \ T_j^{(4)}}{\partial \alpha_k} \cdot 2\beta_k+ \lambda_j^2 \sum_{k=1}^m \Big( \frac{\partial f}{\partial \alpha_k} \cdot 4\mu_k \alpha_k \Big) \Lambda Q
\end{align}
and 
\begin{align}
    \re \ \hat{E}_j^{(4)} &=- \lambda_j^4 \Big( 2\phi_{QT_j^{(3)}} T_j^{(1)}+ 2\phi_{T_j^{(1)} T_j^{(3)}}Q+ 2\phi_{Q T_j^{(1)}} T_j^{(3)}+ \phi_{|T_j^{(1)}|^2} T_j^{(1)} \Big) \\
    &\quad \ -\lambda_j^4 \sum_{k \neq j} \Big( \psi_{Q^2,k}^{(3)} T_j^{(1)}+ 2\psi_{QT_k^{(1)}}^{(3)} Q+ \psi_{Q^2,k}^{(1)} T_j^{(3)} \\
    &\qquad \qquad \qquad+ 2\psi_{QT_k^{(3)}}^{(1)} Q+ \psi_{|T_k^{(1)}|^2}^{(1)} Q+ 2\psi_{QT_k^{(1)}}^{(1)} T_j^{(1)} \Big) \\
    &\equiv -\lambda_j^4 f^3 \Big( \phi_{Q( \Lambda \Lambda Q- 2\Lambda Q)} \Lambda Q+ \phi_{\Lambda Q( \Lambda \Lambda Q- 2\Lambda Q)} Q \\
    &\qquad \qquad \ \ + \phi_{Q \Lambda Q} (\Lambda \Lambda Q- 2\Lambda Q)+ \phi_{(\Lambda Q)^2} \Lambda Q+ (\Lambda \Lambda Q- 2\Lambda Q) \Big).
\end{align}
Using parity and $(\Lambda Q, Q)=0$, we have $(\re \ \hat{E}_j^{(4)}, \nabla Q)= (\imm \ \hat{E}_j^{(4)}, Q)=0$. Thus, we take
\begin{equation}
    b_j^{(5)}=d_j^{(5)}=0. 
\end{equation}
Moreover, since $\psi_{Q^2,k}^{(5)}Q \equiv0$ by \cite[Lemma\;4.2]{GSW} and
\begin{align}
    -L_+(\Lambda \Lambda \Lambda Q- 6\Lambda \Lambda Q+ 8\Lambda Q) &= \phi_{Q( \Lambda \Lambda Q- 2\Lambda Q)} \Lambda Q+ \phi_{\Lambda Q( \Lambda \Lambda Q- 2\Lambda Q)} Q \\
    &\quad + \phi_{Q \Lambda Q} (\Lambda \Lambda Q- 2\Lambda Q)+ \phi_{(\Lambda Q)^2} \Lambda Q+ (\Lambda \Lambda Q- 2\Lambda Q),
\end{align} 
we have
\begin{align}
    T_j^{(5)} \equiv &\ \frac{f^3}{6} (\Lambda \Lambda \Lambda Q- 6\Lambda \Lambda Q+ 8\Lambda Q)- i\lambda_j^{-2} \sum_{k=1}^m \Big( \frac{\partial f}{\partial \alpha_k} \cdot \mu_k \alpha_k \Big) |y_j|^2 Q \\
    &\ + i\lambda_j^{-2} \sum_{k=1}^m \frac{\partial L_-^{-1} \re \ T_j^{(4)}}{\partial \alpha_k} \cdot 2\beta_k.
\end{align}

Then we have
\begin{align}
    \imm \ \hat{E}_j^{(5)} &= -2\lambda_j^4 \phi_{QT_j^{(1)}} \imm \ T_j^{(4)}- \lambda_j^4 \sum_{k \neq j} \psi_{Q^2,k}^{(1)} \imm \ T_j^{(4)}+ \lambda_j m_j^{(2)} \Lambda T_j^{(3)} \\
    &\quad \ +\lambda_j^2 \sum_{k=1}^m \bigg( \frac{\partial \re \ T_j^{(5)}}{\partial \alpha_k} \cdot 2\beta_k- \frac{\partial T_j^{(1)}}{\partial \lambda_k} 4\mu_k \lambda_k+ \frac{\partial T_j^{(3)}}{\partial \lambda_k} m_k^{(2)} \bigg) \\
    &\equiv \lambda_j h \frac{f^2}{2} \Lambda( \Lambda \Lambda Q- 2\Lambda Q)- \lambda_j \frac{f^2}{2} h (\Lambda \Lambda \Lambda Q- 6\Lambda \Lambda Q+ 8\Lambda Q) \\
    &\quad \ + 8\lambda_j^2 \mu_j f \Lambda Q- 2\lambda_j f^2 h (\Lambda \Lambda Q- 2\Lambda Q) \\
    &= 8\lambda_j^2 \mu_j f \Lambda Q
\end{align}
and
\begin{align}
    \re \ \hat{E}_j^{(5)} &= -\lambda_j^4 \Big( 2\phi_{QT_j^{(1)}} \re \ T_j^{(4)}+ 2\phi_{Q \re T_j^{(4)}} T_j^{(1)}+ 2\phi_{T_j^{(1)} \re \ T_j^{(4)}} Q \Big) \\
    &\quad \ -\lambda_j^4 \sum_{k=1}^m \Big( \psi_{Q^2,k}^{(1)} \re \ T_j^{(4)}+ 2\psi_{Q \re \ T_k^{(4)}}^{(1)} Q+ \psi_{Q^2,k}^{(2)} T_j^{(3)}+ 2\psi_{QT_k^{(3)}}^{(2)} Q \\
    &\qquad \qquad \qquad+ \psi_{|T_k^{(1)}|^2}^{(2)} Q+ 2\psi_{Q T_k^{(1)}}^{(2)} T_j^{(1)}+ \psi_{Q^2,k}^{(4)} T_j^{(1)}+ \psi_{QT_k^{(1)}}^{(4)} Q \Big) \\
    &\quad  \ -\lambda_j b_j^{(2)} y_j T_j^{(3)}- \lambda_j^2 \sum_{k=1}^m \frac{\partial \imm \ T_j^{(5)}}{\partial \alpha_k} \cdot 2\beta_k \\
    &= - \lambda_j^2 \sum_{k=1}^m \frac{\partial \imm \ T_j^{(5)}}{\partial \alpha_k} \cdot 2\beta_k+ \text{odd function}.
\end{align}
Thus, we take $d_j^{(6)}=0$, $b_j^{(6)}$ properly and 
\begin{align}
    \re \ T_j^{(6)} &= \lambda_j^4 \sum_{l=1}^m \sum_{k=1}^m \frac{\partial}{\partial_{\alpha_l}} \Big( \frac{\partial f}{\partial \alpha_k} \cdot \mu_k \alpha_k \Big) \cdot 2\beta_l L_+^{-1} (|y_j|^2 Q)+ \text{odd function} \\
    &= 2\lambda_j^2 \| Q \|_{L^2}^2 \sum_{l=1}^m \frac{\partial}{\partial \alpha_l} \sum_{k \neq j} \frac{\alpha_{jk} \cdot \beta_{jk}}{|\alpha_{jk}|^4} \cdot \mu_l \alpha_l \rho+ \mathcal{A}+ \text{odd function},
\end{align}
where $\mathcal{A} \in S_8$. Then we have
\begin{align}
    \imm \ \hat{E}_j^{(6)} &= -2\lambda_j^4 \phi_{QT_j^{(1)}} \imm \ T_j^{(5)}- \lambda_j^4 \sum_{k \neq j} \Big( \psi_{Q^2,k}^{(1)} \imm \ T_j^{(5)}+ \psi_{Q^2,k}^{(2)} \imm \ T_j^{(4)} \Big) \\
    &\quad \ -\lambda_j b_j^{(2)} \cdot y_j \imm \ T_j^{(4)}+ \lambda_j m_j^{(2)} \Lambda \re \ T_j^{(4)} \\
    &\quad \ +\lambda_j^2 \sum_{k=1}^m \bigg( \frac{\partial \re \ T_j^{(6)}}{\partial \alpha_k} \cdot 2\beta_k+ \frac{\partial T_j^{(3)}}{\partial \alpha_k} \cdot 4\mu_k \alpha_k+ \frac{\partial \re \ T_j^{(4)}}{\partial \lambda_k} m_k^{(2)} \bigg) \\
    &\equiv -2\lambda_j^4 f (\phi_{Q\Lambda Q}+1) \imm \ T_j^{(5)}+ \lambda_j^2 \sum_{k=1}^m \frac{\partial T_j^{(3)}}{\partial \alpha_k} \cdot 4\mu_k \alpha_k \\
    &\quad \ + \lambda_j^2 \frac{\partial \re \ T_j^{(6)}}{\partial \alpha_k} \cdot 2\beta_k+ \text{odd function} \\
    &\equiv 2\lambda_j^2 f \sum_{k=1}^m \Big( \frac{\partial f}{\partial \alpha_k} \cdot \mu_k \alpha_k \Big) \big( \phi_{Q\Lambda Q} |y_j|^2 Q+ |y_j|^2 Q +2(\Lambda \Lambda Q- 2\Lambda Q) \big) \\
    &\quad \ + 4\lambda_j^4 \| Q \|_{L^2}^2 \sum_{s=1}^m \Big( \frac{\partial}{\partial \alpha_s} \sum_{l=1}^m \frac{\partial}{\partial \alpha_l} \sum_{k \neq j} \frac{\alpha_{jk} \cdot \beta_{jk}}{|\alpha_{jk}|^4} \cdot \mu_l \alpha_l \Big) \cdot \beta_s \rho+ \text{odd function}.
\end{align}
Since $L_-(\Lambda Q)= -2\phi_{Q \Lambda Q}Q- 2Q$ and $L_-(|y_j|^2Q)= -4\Lambda Q$, we have
\begin{align}
    \Big( \phi_{Q\Lambda Q} |y_j|^2 Q+ |y_j|^2 Q +2(\Lambda \Lambda Q- 2\Lambda Q), Q \Big)= \big( \Lambda Q, -\frac{1}{2} L_-(|y_j|^2 Q) \big)- 2(\Lambda Q, \Lambda Q)=0
\end{align}
by integration by parts. The odd term is also orthogonal to $Q$. Thus we take
\begin{equation}
    d_j^{(7)}= -4\lambda_j^2 \| Q \|_{L^2}^2 \sum_{s=1}^m \Big( \frac{\partial}{\partial \alpha_s} \sum_{l=1}^m \frac{\partial}{\partial \alpha_l} \sum_{k \neq j} \frac{\alpha_{jk} \cdot \beta_{jk}}{|\alpha_{jk}|^4} \cdot \mu_l \alpha_l \Big) \cdot \beta_s.
\end{equation}

\begin{Prop} \label{prop trajectory}
Let $(\alpha^\infty, \beta^\infty)$ be a hyperbolic-parabolic solution of \eqref{m-body}. Assume $\lambda^\infty \in \R_+^m$ and $\mu^\infty= \delta^\infty=0$. Then there exists a solution $P^{(N)}$ of \eqref{eq parameters} such that 
\begin{equation}
    \Big| P^{(N)}(t)- P^\infty(t) \Big| \to 0 \text{ as } t \to +\infty.
\end{equation}
For $(\alpha, \beta, \lambda)$ in $P^{(N)}$ we associate the cluster partition given by $(\alpha^\infty, \beta^\infty)$.
\end{Prop}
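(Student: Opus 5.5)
We construct $P^{(N)}$ by solving the system \eqref{eq parameters} backward from a large time and then passing to the limit. Fix $T_n \to +\infty$. Let $P_n$ be the solution of \eqref{eq parameters} with final data $P_n(T_n) = P^\infty(T_n)$, that is $\alpha=\alpha^\infty(T_n)$, $\beta=\beta^\infty(T_n)$, $\lambda=\lambda^\infty$, $\mu=\delta=0$. We will show that $P_n$ exists on $[t_0,T_n]$ for a $t_0$ independent of $n$, and that it obeys uniform bounds of the form
\[
|\alpha_n-\alpha^\infty|\lesssim t^{-\kappa},\quad |\beta_n-\beta^\infty|\lesssim t^{-1-\kappa},\quad |\lambda_n-\lambda^\infty|\lesssim t^{-\kappa},\quad |\mu_n|\lesssim t^{-3},\quad |\delta_n|\lesssim t^{-4}
\]
for a small $\kappa>0$. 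These rates are chosen to be consistent with \eqref{eq boundedness of g}, since $a\sim t^{1/2}$ gives $a^{-5}\sim t^{-5/2}$ and $a^{-7}\sim t^{-7/2}$. Arzelà–Ascoli and a diagonal argument then give a limit solution $P^{(N)}$ on $[t_0,\infty)$ with $|P^{(N)}-P^\infty|\to0$.

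The bounds come from a bootstrap on $[t,T_n]$, integrating the equations backward in the order $\delta,\mu,\lambda,(\alpha,\beta)$.

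\begin{enumerate}
\item By \eqref{eq first few terms d}, $D_j^{(N)}$ starts at $d_j^{(7)}\in S_8$. Under the bootstrap, Lemma \ref{lem properties of admissible functions-2}-type counting gives $|D_j^{(N)}|\lesssim a^{-8}\sim t^{-4}$. The explicit $d^{(7)}$ carries $\mu\alpha\beta$ factors, and the extra homogeneity built into Definition \ref{Def-adm} ($\mu\alpha$ costs degree $3$) is exactly what keeps this term small when $|\alpha|\sim t$. Integrating $\dot\delta_j = D_j^{(N)}$ from $T_n$ gives $|\delta_j|\lesssim t^{-3}$.
\item The equation $\dot\mu_j=-4\mu_j^2+\lambda_j^4\delta_j$ then gives $|\mu_j|\lesssim t^{-2}$. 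I will need to sharpen these rates, for instance by using the full degree count, so that they close the bootstrap.
\item Write $\dot\lambda_j=-4\lambda_j\mu_j+M_j^{(N)}$ with $M_j^{(N)}=m_j^{(2)}$. The $\mu$ term is integrable. The term $m_j^{(2)}$ is an exact derivative along the flow: up to higher-order errors, $m_j^{(2)}=\lambda_j^{-1}\frac{d}{dt}\big(\tfrac{\|Q\|^2}{2}\sum_{k\ne j}|\alpha_{jk}|^{-2}\big)$. Writing the equation for $\lambda_j^2$ as a derivative plus an integrable remainder, $\lambda_j$ tends to a limit. The hypothesis that $\lambda$ is constant on clusters is what makes the intracluster contributions $O(t^{-1})$, and this gives $|\lambda_j-\lambda_j^\infty|\lesssim t^{-1}$.
\end{enumerate}

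The main obstacle is the $(\alpha,\beta)$ system. Using \eqref{eq first few terms b}, it reads
\[
\dot\alpha_j=2\beta_j+4\mu_j\alpha_j,\qquad \dot\beta_j=b_j^{(2)}+O(t^{-5/2})-\lambda_j^4\delta_j\alpha_j-4\mu_j\beta_j.
\]
Here $b_j^{(2)}$ is exactly the $m$-body force in \eqref{m-body}. Degree-$5$ terms in $B$ are $O(a^{-5})$, since $b^{(3)}=b^{(4)}=0$ leaves $b^{(5)},b^{(6)}$ of degree $\ge 6$, i.e. $O(t^{-3})$. The terms $\mu\alpha$ and $\delta\alpha$ are small by the bounds above together with $|\alpha|\lesssim t$.

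Set $x=\alpha-\alpha^\infty$. Following the sketch of Proposition \ref{prop existence hyp-parab}, this gives
\[
\ddot x=\nabla^2U(\alpha^\infty)x+O(|x|^2t^{-3})+F(t),\qquad \nabla^2U(\alpha^\infty)=t^{-2}A+O(t^{-2-\epsilon}),
\]
with a forcing $|F|\lesssim t^{-2-\kappa'}$ and zero final data at $T_n$. I would apply the backward-solving argument of \cite[Lemma\;4.4]{GSW} for $\ddot x=Ax/t^2+F$: diagonalize the Euler-type operator, and integrate each mode from $T_n$ backward with the indicial exponents $r(r-1)=\text{eig}(A)$. This yields $|x|\lesssim t^{-\kappa}$ uniformly in $n$, provided the positive parabolic spectrum of $A$ is handled as in that lemma, possibly after choosing some free final data in the unstable modes via a topological (Brouwer) argument. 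This is the step I expect to be delicate. Once it holds, the bootstrap closes for $t_0$ large, and compactness finishes the proof.
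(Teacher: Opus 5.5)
There are two genuine gaps, and the first is exactly what the paper's proof addresses. \emph{First, your $(\delta,\mu)$ bootstrap does not close, and the remedy you suggest (the full degree count) cannot close it.} Lemma~\ref{lem properties of admissible functions-2} only gives $|D_j^{(N)}|\lesssim a^{-8}\sim t^{-4}$, hence $|\delta|\lesssim t^{-3}$ and $|\mu|\lesssim t^{-2}$. These rates do not recover that lemma's own hypotheses $|\mu|\lesssim a^{-5}$, $|\delta|\lesssim a^{-7}$. They are also fatal downstream. Since $|\alpha_j|\sim t$ when $v_j\neq 0$, the term $4\mu_j\alpha_j$ in $\dot\alpha_j$ is only $O(t^{-1})$. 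Likewise $\lambda_j^4\delta_j\alpha_j=O(t^{-2})$ in $\dot\beta_j$ is a critical forcing for $\ddot x=Ax/t^2+F$, so $\alpha-\alpha^\infty$ would not tend to zero. Your posited rates $t^{-3}$ and $t^{-4}$ are not reachable by counting either, because the degree-$\ge 9$ part of $D_j^{(N)}$ is only known to be $O(a^{-9})=O(t^{-9/2})$. The correct targets are $t^{-5/2}$ and $t^{-7/2}$.

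The missing idea is to evaluate the leading term $d_j^{(7)}$ explicitly and use the cluster geometry. For $j,k$ in one cluster, $|\alpha_{jk}|\sim t^{1/2}$ \emph{and} $|\beta_{jk}|\lesssim t^{-1/2}$. Hence $\alpha_{jk}\cdot\beta_{jk}/|\alpha_{jk}|^4\sim t^{-2}$, half a power better than the $a^{-3}\sim t^{-3/2}$ the degree count assigns; across clusters it is $\sim t^{-3}$. Two $\alpha$-derivatives cost $t^{-1/2}$ each and $|\mu_l\alpha_l|\lesssim t^{-3/2}$, so $|d_j^{(7)}|\lesssim t^{-9/2}$. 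Consequently $\dot\delta=O(t^{-9/2})$, $|\delta|\lesssim t^{-7/2}$ and $|\mu|\lesssim t^{-5/2}$, up to $\epsilon$-losses. These are exactly the thresholds in \eqref{eq boundedness of g}, and they produce the forcings $O(t^{-3/2+})$ and $O(t^{-5/2+})$ in \eqref{this-eq-traje}. The same intra-cluster bound makes $m_j^{(2)}=O(t^{-2})$ directly integrable in your step (3). There you need neither the exact-derivative structure nor the assumption $\lambda_j^\infty=\lambda_k^\infty$, which plays no role in this proposition.

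\emph{Second, the $(\alpha,\beta)$ step with zero final data at $T_n$ fails outright, not just possibly.} Each parabolic cluster gives $A$ the eigenvalue $3/4$, in the direction of its limiting central configuration $b$: from $\nabla U(b)=-\tfrac14 b$ and homogeneity, $\nabla^2U(b)b=\tfrac34 b$. The indicial roots are $3/2$ and $-1/2$. The $t^{-1/2}$ solution is the cluster's time-translation mode, and it grows backward in time. Variation of constants with $x(T_n)=\dot x(T_n)=0$ puts into this mode the amount $t^{-1/2}\int_t^{T_n}s^{3/2}F(s)\,ds$. This is already of size $t^{-1/2}\log(T_n/t)$ for $|F|\sim s^{-5/2}$, and worse for your $|F|\lesssim s^{-2-\kappa'}$. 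So there is no bound uniform in $n$, and Arzel\`a--Ascoli has nothing to act on. No Brouwer argument is needed to fix this. Integrate the components with negative indicial root forward from $T_0$ and the others backward from $+\infty$, i.e.\ solve directly on $[T_0,\infty)$ with decay at infinity. That is what the paper does, via a fixed point in the weighted space $Y$ following \cite{GSW}, and it makes both the truncation at $T_n$ and the compactness step unnecessary.
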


\begin{proof}
Take $\epsilon>0$ small. Define $Y= \Big\{ P \in C \big( [T_0,+\infty), \Omega \big) \ \big| \ {\| P-P^\infty \|}_1 \le 1 \Big\}$, where
\begin{equation}
    {\| P \|}_1:= \sum_{j=1}^m \sup_{t \ge T_0} \Big( t^{\f12-3\epsilon} |\alpha_j(t)|+ t^{\f32-3\epsilon} |\beta_j(t)|+ t^{\f32-3\epsilon} |\lambda_j(t)|+ t^{\f52-2\epsilon} |\mu_j(t)|+ t^{\f72-\epsilon} |\delta_j(t)| \Big).
\end{equation}

Note that if $\alpha_j^\infty$ and $\alpha_k^\infty$ are in the same cluster, then $\alpha_{jk}^\infty \sim t^{\f12}$ and $\beta_{jk}^\infty \sim t^{-\f12}$. If they are in different clusters, then $\alpha_{jk}^\infty \sim t$ and $\beta_{jk}^\infty \sim 1$. We have respectively
\begin{equation}
    \frac{\alpha_{jk}^\infty \cdot \beta_{jk}^\infty}{|\alpha_{jk}^\infty|^4} \sim t^{-2}\;\;\text{(same cluster)},\;\;\; \frac{\alpha_{jk}^\infty \cdot \beta_{jk}^\infty}{|\alpha_{jk}^\infty|^4} \sim t^{-3}\;\;\text{(different clusters)}.
\end{equation}
Thus $|d_j^{(7)}(P^\infty)| \lesssim t^{-\f92}$ and particularly for $P \in Y$ we deduce $|d_j^{(7)}(P)| \le t^{-\f92}$. By the Taylor formula, we have
\begin{equation}
    b_j^{(2)}(P)- b_j^{(2)}(P^\infty)= -\| Q \|_{L^2}^2 \sum_{k \neq j} \left( \frac{\alpha_{jk}- \alpha_{jk}^\infty}{|\alpha_{jk}^\infty|^4}- \frac{4\alpha_{jk}^\infty \cdot (\alpha_{jk}- \alpha_{jk}^\infty)}{|\alpha_{jk}^\infty|^6} \alpha_{jk}^\infty \right)+ O(t^{-\f72+ 6\epsilon}).
\end{equation}
Hence there exists $A_j \in \R^{4 \times 4m}$ such that
\begin{equation}
    b_j^{(2)}(P)- b_j^{(2)}(P^\infty)= \frac{A_j (\alpha- \alpha^\infty)}{t^2}+ O(t^{-3+ 3\epsilon}),
\end{equation}
where $\alpha$ is understood as a column vector. Now let us set $A=(A_1^T, \cdots, A_m^T)^T \in \R^{4m \times 4m}$, then \eqref{eq parameters} can be rewritten into
\begin{equation} \label{this-eq-traje}
    \left\{ \begin{aligned}
        &\dot{\alpha}- \dot{\alpha}^\infty= 2(\beta- \beta^\infty)+ O(t^{-\f32+ 2\epsilon}), \\
        &\dot{\beta}- \dot{\beta}^\infty= \frac{A (\alpha- \alpha^\infty)}{t^2} + O(t^{-\f52+ 2\epsilon}), \\
        &\dot{\lambda}= O(t^{-\f32+ 2\epsilon}), \ \dot{\mu}= O(t^{-\f72+ \epsilon}), \ \dot{\delta}= O(t^{-\frac{9}{2}}). 
    \end{aligned} \right.
\end{equation}
We then use the same argument as in \cite{GSW} for \eqref{this-eq-traje} to conclude the claim.
\end{proof}

We have the following more precise estimate: 
\begin{equation}
    |\alpha^{(N)}(t)- \alpha^\infty(t)| =o(t^{-\frac{1}{2}+}),\; |\beta^{(N)}(t)- \beta^\infty(t)|= o(t^{-\frac{3}{2}+}), \; |\lambda^{(N)}(t)- \lambda^\infty|=o(t^{-\frac{1}{2}+}),
\end{equation}
and
\begin{equation} \label{mixed N est}
    |\alpha^{(N)}(t) | \lesssim t,\; |\beta^{(N)}(t)| \lesssim 1, \; \lambda^{(N)}(t) \sim 1,\; |\mu^{(N)}(t)| \lesssim t^{-\frac{5}{2}},\; |\delta^{(N)}(t)| \lesssim t^{-\frac{7}{2}}.
\end{equation}
Further, taking differences in \eqref{this-eq-traje}, we have for any cluster $K$  and all $j , k \in K$
\begin{equation} \label{mixed N est - same cluster}
    |\alpha_j^{(N)}(t) - \alpha_k^{(N)}(t)   | \lesssim t^{\f12},\; |\beta_j^{(N)}(t) - \beta_k^{(N)}(t)| \lesssim t^{- \f12},\;|\lambda_j^{(N)}(t) - \lambda_k^{(N)}(t)| \lesssim t^{- \f12},
\end{equation}
where for the latter we make the assumption $\lambda_j^{\infty} = \lambda_k^{\infty} $ for all $K$ and $j,k \in K$. We note the implicit constants only depend on $P^\infty$.

\section{Bootstrap reduction}\label{sec:bootstrap}

We follow the strategy in \cite[Section 5]{GSW} (see also \cite{Wu}, \cite{KRM}) and reduce the proof of Theorem \ref{main-thm}  to a uniform backwards estimate.

\begin{Prop}[Uniform bound] \label{prop uniform estimate}
Let $P^{(N)}$ be the solution of \eqref{eq parameters} in Proposition \ref{prop trajectory} and $\gamma_j^{(N)}(t)$ be such that
\begin{equation}
    \gamma_j^{(N)}(0)=0, \ \dot{\gamma}_j^{(N)}= (\lambda_j^{(N)})^2- |\beta_j^{(N)}|^2- \big( \dot{\beta}_j^{(N)}+ 4\mu_j^{(N)} \beta_j^{(N)} \big) \cdot \alpha_j^{(N)}- \big( \dot{\mu}_j^{(N)}+ 4(\mu_j^{(N)})^2 \big) |\alpha_j^{(N)}|^2.
\end{equation}
For large $N$  there exists $T_0=T_0(N)$ such that if   $(T_n)_n$ is a sequence in $[T_0, + \infty)$ with $T_n \to +\infty$, we have the following. Let  $u_n$ be the solution to 
\begin{equation} \label{eq equation of un}
    \left\{ \begin{aligned}
        &i\partial_t u_n+ \Delta u_n- \phi_{|u_n|^2} u_n= 0, \\
        &u_n(T_n,\cdot)= R_{g^{(N)}} ^{(N)}(T_n,\cdot),
    \end{aligned} \right.
\end{equation}
with backwards maximal interval $(t_n, T_n]$ and $T_* \in [T_0,T_n] \cap (t_n, T_n]$. Then if
\begin{equation} \label{eq uniform estimate, bootstrap}
    \left\| u_n(t)- R_{g^{(N)}}^{(N)}(t) \right\|_{H^1} \le 2t^{-\frac{N}{9}}, \qquad \forall n\ge 1,\ \forall t \in [T_*,T_n],
\end{equation}
also
\begin{equation}
    \left\| u_n(t)- R_{g^{(N)}}^{(N)}(t) \right\|_{H^1} \le t^{-\frac{N}{9}}, \qquad \forall n\ge 1,\ \forall t \in [T_*,T_n].
\end{equation}
\end{Prop}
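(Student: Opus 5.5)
The proof will follow the standard modulation/energy method of \cite[Section 5]{GSW} (see also \cite{KRM}, \cite{Wu}), adapted to the mixed scales \eqref{mixed N est}--\eqref{mixed N est - same cluster}.

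\emph{Step 1: modulation.} On $[T_*,T_n]$ the bootstrap \eqref{eq uniform estimate, bootstrap} makes $u_n$ close to $R^{(N)}_{g^{(N)}}$. By the implicit function theorem we write
\[
u_n(t)=R^{(N)}_{g(t)}(t)+\varepsilon(t),
\]
with a modulated path $g(t)=(P(t),\gamma(t))$ close to $g^{(N)}(t)$ and $g(T_n)=g^{(N)}(T_n)$. We impose, for each $j$, the orthogonality conditions of $g_j^{-1}\varepsilon$ against the generalized kernel of Lemma~\ref{lem straight}: $\nabla Q,\ \Lambda Q,\ |y|^2Q,\ yQ,\ \rho,\ Q$, with real and imaginary parts chosen to match Proposition~\ref{non-deg-coer-Inv}. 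These are $4m+4m+m+m+m+m$ conditions, matching the parameters $(\alpha,\beta,\lambda,\mu,\delta,\gamma)$. The Jacobian is invertible because the pairings in Lemma~\ref{lem straight}(iii) are nonzero and the solitons are separated by $a(t)\gtrsim t^{1/2}$, so cross terms are $O(e^{-ca})$.

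\emph{Step 2: modulation equations.} Inserting the decomposition into \eqref{H} gives
\[
i\partial_t\varepsilon+\Delta\varepsilon-(\text{linearization})\varepsilon=-\Psi^{(N)}-\sum_j S_j^{(N)}e^{i(\cdots)}+O(\varepsilon^2).
\]
Pairing with the kernel directions and using the orthogonality yields
\[
|\mathrm{Mod}(t)|\lesssim \|\varepsilon\|_{L^2}+a^{-N-2},
\]
where $\mathrm{Mod}$ denotes the vector of residuals of \eqref{eq parameters} for $P(t)$. Proposition~\ref{prop accuracy of approximate solution} and Lemma~\ref{lem properties of admissible functions-2} apply because the bounds \eqref{mixed N est} with $a\sim t^{1/2}$ give exactly the hypotheses \eqref{eq boundedness of g}. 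This is the point where the modified Definition~\ref{Def-adm} is needed. Integrating backwards from $T_n$, where $P=P^{(N)}$, and comparing with $P^{(N)}$ through the linearized system \eqref{this-eq-traje} gives
\[
|P(t)-P^{(N)}(t)|\lesssim t^{2}\sup_{s\ge t}\bigl(\|\varepsilon(s)\|+s^{-(N+2)/2}\bigr),
\]
or a similar polynomial loss. We also need to control $\gamma$ via the $\dot\gamma$ equation, which involves $|\alpha|\lesssim t$; there I would exploit the $\mu\alpha$ and $\delta\alpha$ structure.

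\emph{Step 3: energy estimate (main obstacle).} Define a localized Weinstein-type functional
\[
\mathcal{W}=\mathcal{E}(u_n)+\sum_j\lambda_j^2\int\chi_j|u_n|^2-\sum_j\beta_j\cdot\mathrm{Im}\int\chi_j\nabla u_n\,\overline{u_n}+\dots,
\]
expanded to second order in $\varepsilon$, with cutoffs $\chi_j$ adapted to the solitons. Within a cluster the separation is only $t^{1/2}$, so the cutoffs have gradients of size $t^{-1/2}$. By Proposition~\ref{non-deg-coer-Inv} together with the orthogonality, the quadratic form is coercive, giving
\[
\|\varepsilon\|_{H^1}^2\lesssim \text{quadratic form}+\text{small}.
\]
Computing $\frac{\mathrm{d}}{\mathrm{d}t}\mathcal{W}$ produces terms from the time derivatives of $\lambda,\beta,\chi_j$, which are $O(t^{-1/2-}\|\varepsilon\|^2)$, plus error terms $O(t^{-(N+2)/2}\|\varepsilon\|)$. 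The difficulty is the non-integrable factor $t^{-1/2}$ coming from the cutoff speed inside a cluster. I would try to absorb it by using that $\dot\lambda_j=O(t^{-3/2+})$, that $\lambda_j=\lambda_k$ within a cluster up to $t^{-1/2}$, and that Galilean-boosting each cluster makes the relative velocities $O(t^{-1/2})$ while the cutoff transitions sit in regions where $R$ is exponentially small. This is what gives $t^{-1-}$-integrability. Backward integration from $\varepsilon(T_n)=0$ then yields
\[
\|\varepsilon(t)\|_{H^1}\lesssim t^{-(N+2)/2+C}.
\]

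\emph{Step 4: closing the bootstrap.} Finally we combine
\[
\|u_n-R_{g^{(N)}}\|\le\|\varepsilon\|+\|R_g-R_{g^{(N)}}\|\lesssim \|\varepsilon\|+|P-P^{(N)}|(1+|\alpha|\,|\mu|+\cdots)+|\gamma-\gamma^{(N)}|.
\]
The polynomial losses are beaten by taking $N$ large, since $t^{-N/2+C}\le \tfrac12 t^{-N/9}$ for $t\ge T_0(N)$. This gives the improved bound $\|u_n(t)-R_{g^{(N)}}(t)\|_{H^1}\le t^{-N/9}$ and closes the bootstrap.
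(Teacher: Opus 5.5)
Your architecture matches the paper's: orthogonality via the implicit function theorem, modulation equations, a localized Weinstein-type functional that is coercive under the orthogonality conditions, backward integration from $\varepsilon(T_n)=0$, and the triangle inequality. There is, however, a genuine gap between your Step 2 and Step 3.

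Since you work with the second-order expansion of $\mathcal{W}$ in $\varepsilon$, its time derivative must be computed along the $\varepsilon$-equation \eqref{eq equation of epsilon}. There the forcing $\sum_j S_j e^{i(\gamma_j+\beta_j\cdot x+\mu_j|x|^2)}$ contributes terms of size $Mod(t)\,\|\varepsilon\|_{H^1}$, which you do not list. With your bound $Mod\lesssim\|\varepsilon\|_{L^2}+a^{-N-2}$, these terms are $O(\|\varepsilon\|_{H^1}^2)$ with no decay in $t$. Backward integration from $T_n$ then produces a growth factor of order $e^{C(T_n-t)}$, so no estimate is uniform in $n$. Even a gain of $a^{-1}\sim t^{-1/2}$ would not suffice.

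What you need is the $a^{-2}$ gain of Proposition \ref{prop modulation estimate}. Pair the equation with $g_j\theta$ for $\theta\in\{iQ,ixQ,i|x|^2Q,\Lambda Q,\nabla Q,\rho\}$; the term linear in $\varepsilon$ is $\mathrm{Re}\int\varepsilon\,\overline{g_j(L_j\theta)}$. By the relations of Lemma \ref{lem straight}(ii) ($L_-Q=0$, $L_-(xQ)=-2\nabla Q$, $L_-(|x|^2Q)=-4\Lambda Q$, $L_+\Lambda Q=-2Q$, $L_+\nabla Q=0$, $L_+\rho=-|x|^2Q$), the leading part of $L_j\theta$ is exactly a direction killed by \eqref{eq orthogonality}. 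The remainder comes from $V_j-Q$ and from the interaction potential $\psi^{(1)}$, both $O(a^{-2})$. Hence $Mod\le C\|\varepsilon\|_{H^1}a^{-2}+C_N(a^{-N-2}+\|\varepsilon\|_{H^1}^2)$, and $Mod\,\|\varepsilon\|_{H^1}=O(t^{-1}\|\varepsilon\|_{H^1}^2)+\dots$ because the intra-cluster distance gives $a\gtrsim t^{1/2}$.

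Two further corrections.

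First, the cluster mechanism does not give $t^{-1-}$-integrability. Replacing $\lambda_j^2+|\beta_j|^2$ and $\beta_j$ by a cluster representative costs $O(t^{-1/2})$, multiplied by $|\partial_t\varphi_j|+|\nabla\varphi_j|=O(t^{-1/2})$. The cluster cut-off $\varphi_K=\sum_{j\in K}\varphi_j$ has derivatives $O(t^{-1})$ by \eqref{eq cutoff, mixed case}. So you get exactly $O(t^{-1}\|\varepsilon\|_{H^1}^2)$, the same borderline size as the modulation term above. This is not integrable; the argument closes only because the bootstrap rate is a large power: $\int_t^{T_n}\tau^{-1-N/2}\,\d\tau\le\frac{2}{N}t^{-N/2}$. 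This is the $\frac{C}{N}t^{-N/2}$ in Proposition \ref{prop estimate on G(epsilon)}, or equivalently Gronwall with kernel $C/t$ once $N\gg C$. Your final bound with a polynomial loss $t^{C}$ is consistent with this, but the claimed $t^{-1-}$ gain is not available.

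Second, because $g_j$ carries the phase $e^{i\mu_j|x|^2}$, the coercive functional must contain the $\mu_j$-terms $\G_4,\G_5$, which involve $\int\varphi_j|x|^2|\varepsilon|^2$. The paper therefore also propagates $\|x\varepsilon\|_{L^2}\le t^{-N/4+2}$ in the bootstrap, and your ``$+\dots$'' in $\mathcal{W}$ needs this as well.
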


This proposition readily implies Theorem \ref{main-thm} as seen in \cite{GSW} and similarly in \cite{Wu}. For the proof of Theorem \ref{main-thm} by Proposition \ref{prop uniform estimate} we refer to \cite[Section 5]{GSW}.
\;\\[6pt]
In order to prove Proposition \ref{prop uniform estimate}, we deal with the zero modes of the linearized operators. The following lemma establishes a suitable modulation path $P$ \emph{with orthogonality conditions using the bootstrap assumption \eqref{eq uniform estimate, bootstrap}}.

\begin{Lemma} \label{lem orthogonality}
Let $N,n \ge 1$. Then there exist $T_0=T_0(N)>0$ and a unique modulation path $P$ given by parameters $g \in C^1([T_0,+\infty), \Omega \times (\R/2\pi\Z)^m)$ such that: if 
\begin{equation}
    \varepsilon(t,x)= u_n(t,x)- R_g^{(N)}(t,x),
\end{equation}
then for $t \ge T_0$ and $1\le j \le m$, we have
\begin{equation} \label{eq orthogonality} \begin{aligned}
    &\ \re \Big( \varepsilon(t), g_j Q \Big)= \re \Big( \varepsilon(t), g_j(x Q) \Big)= \re \Big( \varepsilon(t), g_j \big( |x|^2 Q \big) \Big) \\
    &= \imm \Big( \varepsilon(t), g_j \big( \Lambda Q \big) \Big)= \imm \Big( \varepsilon(t), g_j \big( \nabla Q \big) \Big)= \imm \Big( \varepsilon(t), g_j \rho \Big)= 0.
\end{aligned} \end{equation} 
In particular, we have
\begin{equation} \label{eq value of epsilon at T_n}
    g(T_n)=g^{(N)}(T_n), \quad \varepsilon(T_n)=0.
\end{equation}
\end{Lemma}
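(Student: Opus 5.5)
The proof is a standard implicit function theorem argument combined with a continuity argument backwards in time from $T_n$. I would work with the full parameter vector $g=(\alpha,\beta,\lambda,\mu,\delta,\gamma)\in\Omega\times(\R/2\pi\Z)^m$, which has $4m+4m+m+m+m+m=11m$ real components. The orthogonality conditions \eqref{eq orthogonality} also give exactly $11m$ scalar equations: for each $j$ there are $1+4+1$ real-part conditions and $1+4+1$ imaginary-part conditions, so $12$ per $j$. The count must therefore be matched against the modulation directions. These are the derivatives of $g_jV_j^{(N)}$ with respect to $\gamma_j$ (giving $i g_jQ$), $\alpha_j$ (giving $-\lambda_j g_j\nabla Q$), $\beta_j$ (giving $i x\, g_jQ$), $\lambda_j$ (giving $\lambda_j^{-1}g_j\Lambda Q$), $\mu_j$ (giving $i|x|^2 g_jQ$) and $\delta_j$ (giving $g_j\rho$), each up to lower order corrections. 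I would reorganize $\beta_j$ and $\alpha_j$ so that $x=\lambda_j^{-1}y_j+\alpha_j$. Define
\[
\Phi(t,g):=\Big(\re(u_n(t)-R^{(N)}_g(t),g_jQ),\ \dots,\ \imm(u_n(t)-R^{(N)}_g(t),g_j\rho)\Big)_{j}.
\]
At $t=T_n$ and $g=g^{(N)}(T_n)$ we have $\Phi=0$, since $\varepsilon(T_n)=0$; this gives \eqref{eq value of epsilon at T_n} once uniqueness is established.

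The key step is invertibility of $D_g\Phi$ near $\varepsilon=0$. Since $\partial_g\Phi=-(\partial_g R_g^{(N)},\text{test functions})+O(\|\varepsilon\|_{L^2})$, the matrix to analyze is the pairing between the modulation directions listed above and the test functions. For fixed $j$ the leading part is block structured, with pairings such as
\[
\imm(iQ,\rho)\cdot\re,\quad (Q,\rho)\neq0,\quad (\Lambda Q,|x|^2Q)\neq0,\quad (xQ,\nabla Q)=-2\|Q\|_{L^2}^2\neq0,\quad (\rho,\Lambda Q)\ \text{or}\ (\rho,Q)
\]
from Lemma \ref{lem straight}(iii). I would verify that, after ordering variables as $(\gamma,\mu,\lambda,\delta)$ for the radial block and $(\alpha,\beta)$ for the odd block, the matrix is triangular or nondegenerate. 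It uses $(Q,\Lambda Q)=0$, $(Q,\rho)=-\f12\|xQ\|^2\ne0$ and $(\Lambda Q,|x|^2Q)=-\|xQ\|^2\neq0$, with parity killing the mixed odd/even entries. The factors $e^{i\beta_j\cdot x+i\mu_j|x|^2}$ are harmless because the same $g_j$ acts on both sides of each pairing. Cross terms between different $j\ne k$ are $O(e^{-ca})$ by exponential localization. Corrections from $W_j^{(N)}$ and the $\delta_j\rho$ term are $O(a^{-1})$ by Lemma \ref{lem properties of admissible functions-2}, which applies because \eqref{mixed N est} gives \eqref{eq boundedness of g} with $a\sim t^{1/2}$. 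Derivatives with respect to $\alpha_k$ of $W_j$ likewise carry admissible decay. Hence for $T_0$ large and $\|\varepsilon\|_{L^2}$ small, $D_g\Phi$ is uniformly invertible with uniformly bounded inverse.

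Next I would apply a quantitative implicit function theorem at each time, with uniform constants on a ball of fixed small radius around $g^{(N)}(t)$. Starting at $T_n$, this gives a $C^1$ path $g(t)$, with $C^1$ regularity from the ODE $\dot g=-(D_g\Phi)^{-1}\partial_t\Phi$, where $\partial_t\Phi$ is computed in the weak sense using the equation for $u_n$. The path is continued backwards as long as $|g(t)-g^{(N)}(t)|$ and $\|u_n(t)-R^{(N)}_g(t)\|_{H^1}$ remain small. The bootstrap hypothesis \eqref{eq uniform estimate, bootstrap} together with the IFT estimate $|g-g^{(N)}|\lesssim\|u_n-R_{g^{(N)}}^{(N)}\|_{L^2}\lesssim t^{-N/9}$ closes this continuity argument down to $T_*$ (I read the statement's $T_0$ as the lower endpoint of validity). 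Uniqueness follows from the local uniqueness in the IFT in the small ball.

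I expect the main obstacle to be the nondegeneracy computation, in particular getting the correct pairing of the 12 conditions per soliton against the 12 real modulation directions, including the phase $\gamma_j$. The $\re(\varepsilon,g_jQ)$ condition pairs against the $\delta_j$ and $\lambda_j$ directions rather than against $\gamma_j$, so the matrix is not simply diagonal and the block with entries $(\Lambda Q,Q)=0$, $(\rho,Q)\neq0$, $(|x|^2Q,\Lambda Q)\neq0$, $(Q,Q)$ must be checked carefully for nonzero determinant. A secondary check is that the growth $|\alpha_j|\sim t$ does not spoil uniformity. It does not, since the pairings are translation invariant after applying $g_j^{-1}$.
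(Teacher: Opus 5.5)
Your route, an implicit function theorem at each time plus continuation backwards from $T_n$ under \eqref{eq uniform estimate, bootstrap}, is the one the paper relies on. The paper itself only cites \cite[Lemma 3.1]{Wu}. A small slip first: there are $4m+4m+m+m+m+m=12m$ parameters, not $11m$, which matches the $12$ conditions per soliton.

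\textbf{The step that fails.} The claim that $D_g\Phi$ has a uniformly bounded inverse on a ball of fixed radius around $g^{(N)}(t)$ does not hold as written, for two reasons.

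First, in the raw parameters $\partial_{\beta_j}(g_jQ)=i\,g_j((\lambda_j^{-1}y+\alpha_j)Q)$ and $\partial_{\mu_j}(g_jQ)=i\,g_j(|\lambda_j^{-1}y+\alpha_j|^2Q)$. These are not $g_j$ applied to fixed profiles, so the pairings are not translation invariant. For example, the $\imm(\cdot,g_j\rho)$ row contains $\alpha_j(Q,\rho)$ and $|\alpha_j|^2(Q,\rho)$, of size $t$ and $t^2$. The determinant is unaffected, but the inverse and $D_g^2\Phi$ grow polynomially in $t$. Your remark about reorganizing $\beta_j$ points the right way, but it must also absorb the $\mu_j$ terms.

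Second, a fixed-radius ball is the wrong neighbourhood. $W_j^{(N)}$ contains factors $(\mu_k\alpha_k)^{s_k}$ and $(\delta_k\alpha_k)^{t_k}$, and Lemma \ref{lem properties of admissible functions-2} gives smallness only under \eqref{eq boundedness of g}. That holds at $g^{(N)}$, but fails once $\mu_k$ or $\delta_k$ moves by a fixed $r$, since then $|\mu_k\alpha_k|\sim rt$. The $W_j^{(N)}$-corrections to $D_g\Phi$ are then no longer perturbative.

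\textbf{The repair.} Pass to centred parameters
\[
\tilde\gamma_j=\gamma_j+\beta_j\cdot\alpha_j+\mu_j|\alpha_j|^2,\qquad \tilde\beta_j=\beta_j+2\mu_j\alpha_j,
\]
so the phase becomes $\tilde\gamma_j+\tilde\beta_j\cdot(x-\alpha_j)+\mu_j|x-\alpha_j|^2$. The modulation directions are then $g_j$ applied to $iQ$, $i\lambda_j^{-1}yQ$, $i\lambda_j^{-2}|y|^2Q$, $\lambda_j^{-1}\Lambda Q$, $\rho$, and $-\lambda_j\nabla Q-i\tilde\beta_jQ+O(|\mu_j|)$.

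The leading Jacobian is block lower triangular. Real directions are seen only by real conditions, except that $-i\tilde\beta_jQ$ gives a bounded entry in the $\imm(\cdot,g_j\rho)$ row. Parity separates the odd and radial parts. The odd blocks are multiples of $(yQ,\nabla Q)$. There are two radial $2\times 2$ blocks:
\begin{itemize}
\item $(\lambda_j,\delta_j)$ against $\re(\cdot,g_jQ)$ and $\re(\cdot,g_j(|x|^2Q))$;
\item $(\tilde\gamma_j,\mu_j)$ against $\imm(\cdot,g_j\Lambda Q)$ and $\imm(\cdot,g_j\rho)$.
\end{itemize}
Each has a zero entry coming from $(Q,\Lambda Q)=0$. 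Each determinant equals, up to sign and a power of $\lambda_j$, $(Q,\rho)(\Lambda Q,|x|^2Q)=\frac12\|xQ\|_{L^2}^4\neq0$. So $(Q,Q)$ plays no role, and the inverse is uniformly bounded.

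Now run the IFT on the anisotropic set where $|\mu_j-\mu_j^{(N)}|\le t^{-5/2}$, $|\delta_j-\delta_j^{(N)}|\le t^{-7/2}$, and the remaining centred parameters lie within a small fixed $r_0$. On this set \eqref{eq boundedness of g} persists. Since $|\Phi(t,g^{(N)}(t))|\lesssim t^{-N/9}$, the zero lies deep inside this set once $N$ and $T_0(N)$ are large; this is exactly where those two enter.

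With this in place, your continuation down to $T_*$, the $C^1$ regularity via $\dot g=-(D_g\Phi)^{-1}\partial_t\Phi$, and local uniqueness go through as you describe. The factor $t^2$ lost when converting back to the raw $\gamma_j$ is harmless.
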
 
 For a proof let us refer to a similar argument in \cite[Lemma 3.1]{Wu}. With $P$  as in the previous lemma we now reduce Proposition \ref{prop uniform estimate} to the modulation estimate in the following bootstrap argument.

\begin{Prop}[Bootstrap argument] \label{prop bootstrap}
For $N$ and $T_0=T_0(N)$ large enough, $\forall n\ge 1,\ T_* \in [T_0,T_n] \cap (t_n, T_n]$, if
\begin{equation} \label{eq bootstrap assumption}
   \left\{ \begin{aligned}
       &\| \varepsilon(t) \|_{H^1} \le t^{-\frac{N}{4}}, \quad \| x \varepsilon(t) \|_{L^2} \le t^{-\frac{N}{4}+2}, \\
       &\sum_{j=1}^m \left| \lambda_j(t)- \lambda_j^{(N)}(t) \right|+ \left| \beta_j(t)- \beta_j^{(N)}(t) \right| \le t^{-1-\frac{N}{8}},\\
       &\sum_{j=1}^m \left| \gamma_j(t)- \gamma_j^{(N)}(t) \right|+ \left| \alpha_j(t)- \alpha_j^{(N)}(t) \right| \le t^{-\frac{N}{8}},\\
       &\sum_{j=1}^m \left| \mu_j(t)- \mu_j^{(N)}(t) \right| \le t^{-\frac{5}{2} - \frac{N}{8}},\;\;\;\sum_{j=1}^m \left| \delta_j(t)- \delta_j^{(N)}(t) \right| \le t^{-\f72 - \frac{N}{8}}, 
   \end{aligned} \right. 
\end{equation}
for any $t \in [T_*,T_n]$, then
\begin{equation} \label{eq bootstrap conclusion}
   \left\{ \begin{aligned}
       &\| \varepsilon(t) \|_{H^1} \le \frac{1}{2} t^{-\frac{N}{4}}, \quad \| x \varepsilon(t) \|_{L^2} \le \frac{1}{2} t^{-\frac{N}{4}+2}, \\
       &\sum_{j=1}^m \left| \lambda_j(t)- \lambda_j^{(N)}(t) \right|+ \left| \beta_j(t)- \beta_j^{(N)}(t) \right| \le \frac{1}{2} t^{-1-\frac{N}{8}}, \\
       &\sum_{j=1}^m \left| \gamma_j(t)- \gamma_j^{(N)}(t) \right|+ \left| \alpha_j(t)- \alpha_j^{(N)}(t) \right| \le \frac{1}{2} t^{-\frac{N}{8}},\\
       & \sum_{j=1}^m \left| \mu_j(t)- \mu_j^{(N)}(t) \right| \le \frac{1}{2} t^{-\frac{5}{2} - \frac{N}{8}},\;\;\; \sum_{j=1}^m \left| \delta_j(t)- \delta_j^{(N)}(t) \right| \le \frac{1}{2}t^{-\f72 - \frac{N}{8}}    
   \end{aligned} \right. 
\end{equation}
for any $t \in [T_*,T_n]$.
\end{Prop}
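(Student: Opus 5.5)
The argument splits into two parts: first the modulation parameters, then the remainder $\varepsilon$. Both are closed by integrating backwards from $t=T_n$, where $\varepsilon(T_n)=0$ and $g(T_n)=g^{(N)}(T_n)$ by \eqref{eq value of epsilon at T_n}.

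\textbf{Step 1: modulation equations.} Write the equation for $\varepsilon$ as
\[
i\partial_t\varepsilon+\Delta\varepsilon-\big(\phi_{|R+\varepsilon|^2}(R+\varepsilon)-\phi_{|R|^2}R\big)=-\Psi^{(N)}-\sum_j S_j^{(N)}e^{i\gamma_j+i\beta_j\cdot x+i\mu_j|x|^2}.
\]
Differentiate the orthogonality conditions \eqref{eq orthogonality} in time. The pairings with $g_jQ$, $g_j(xQ)$, $g_j(|x|^2Q)$, $g_j\Lambda Q$, $g_j\nabla Q$ and $g_j\rho$ isolate the modulation vector
\[
\mathrm{Mod}_j(t)=\big(\dot\alpha_j-2\beta_j-4\mu_j\alpha_j,\ \dot\beta_j+\dots-B_j^{(N)},\ \dot\lambda_j+4\lambda_j\mu_j-M_j^{(N)},\ \dot\mu_j+\dots,\ \dot\delta_j-D_j^{(N)},\ \dot\gamma_j+\dots\big).
\]
This uses the generalized kernel relations of Lemma~\ref{lem straight} and the nondegenerate pairings $(Q,\rho)\neq0$ and $(xQ,\nabla Q)\neq0$. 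The outcome is
\[
|\mathrm{Mod}(t)|\lesssim \|\varepsilon\|_{L^2}+a^{-N-2}\lesssim t^{-N/4}+t^{-(N+2)/2},
\]
using Proposition~\ref{prop accuracy of approximate solution} and $a\gtrsim t^{1/2}$.

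The parameter differences are then compared with $P^{(N)}$, which solves \eqref{eq parameters} exactly. This gives a linear ODE for $P-P^{(N)}$ with forcing $\mathrm{Mod}$ and coefficients $\partial_P(B^{(N)},M^{(N)},D^{(N)})$. By \eqref{mixed N est} and \eqref{mixed N est - same cluster}, these coefficients are $O(t^{-2})$ for $\beta$ in terms of $\alpha$ and smaller elsewhere. The factors $\mu_j\alpha_j$ and $\delta_j\alpha_j$ are handled by the modified Definition~\ref{Def-adm}; this is exactly where $|\alpha|\lesssim a^2$ from Lemma~\ref{lem properties of admissible functions-2} is needed. Integrating from $T_n$ and using the bootstrap hypotheses gives the improved parameter bounds with a factor $\tfrac12$, provided $N$ and $T_0$ are large. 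Gaining a power of $t$ when integrating $\dot\beta$, $\dot\alpha$ and $\dot\gamma$ requires $N/4>1+N/8+1$, roughly. For $\gamma$, which involves the $|\alpha|^2$-weighted terms, one needs the improved $O(t^{-N/4})$ control on the $\mu$ and $\delta$ components.

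\textbf{Step 2: energy estimate for $\varepsilon$ (main obstacle).} Following \cite{GSW,Wu,KRM}, I would build a localized energy–virial functional. Let $\chi_j$ be a partition of unity adapted to the solitons, with cutoffs on the scale $a(t)\gtrsim t^{1/2}$ that separate solitons within a cluster. Set
\[
\mathcal{G}(\varepsilon)=\tfrac12\|\nabla\varepsilon\|^2+\sum_j\Big[\tfrac{\lambda_j^2}{2}\int\chi_j|\varepsilon|^2+\text{quadratic Hartree terms}-\beta_j\cdot\mathrm{Im}\int\chi_j\nabla\varepsilon\,\bar\varepsilon\Big]+\text{$\mu_j$-virial corrections}.
\]
Localizing and applying Proposition~\ref{non-deg-coer-Inv} with the orthogonality conditions gives the coercivity $\mathcal{G}\gtrsim\|\varepsilon\|_{H^1}^2-O(t^{-N/2-\epsilon})$.

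The main difficulty is $\tfrac{d}{dt}\mathcal{G}$. Errors come from $\dot\chi_j$, of size $|\dot\alpha|/a\lesssim t^{-1/2}$; inter-cluster solitons move at relative speed $\sim1$ but are separated by $\sim t$. They also come from the slowly varying $\lambda_j$ and $\beta_j$, which are nearly equal within a cluster by \eqref{mixed N est - same cluster}, so the cutoff commutators between solitons of a cluster are $O(t^{-1/2}\cdot t^{-1/2})$. I would aim for
\[
\Big|\tfrac{d}{dt}\mathcal{G}\Big|\lesssim t^{-1-\epsilon}\|\varepsilon\|_{H^1}^2+\|\varepsilon\|_{H^1}\big(a^{-N-2}+|\mathrm{Mod}|\,\|\varepsilon\|\big).
\]
The terms involving $\mathrm{Mod}\cdot\varepsilon$ are handled via the orthogonality conditions. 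Integrating backwards from $T_n$ then yields $\|\varepsilon\|_{H^1}\le\tfrac12t^{-N/4}$ once $N$ is large.

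\textbf{Step 3: the weighted bound.} For $\|x\varepsilon\|_{L^2}$, compute $\tfrac{d}{dt}\|x\varepsilon\|^2=4\,\mathrm{Im}\int x\cdot\nabla\varepsilon\,\bar\varepsilon+\dots$. This is bounded by $\|x\varepsilon\|\,\|\varepsilon\|_{H^1}$ plus source terms with weights $|x|\lesssim t$, coming from $|\alpha_j|\lesssim t$. Integration then gives $\tfrac12t^{-N/4+2}$. I expect the cluster-localization commutator estimates in Step 2 to be the only genuinely new point relative to \cite{GSW}.
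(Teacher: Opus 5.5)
Your plan has the same architecture as the paper's proof. You integrate the modulation equations backwards from $T_n$, control $\varepsilon$ through the localized functional $\G=\sum_{k=1}^5\G_k$ with cutoffs at scale $a\sim t^{1/2}$, and bound $\|x\varepsilon\|_{L^2}$ by a virial identity. You also correctly isolate the one new point. Within a cluster, $|\beta_j-\beta_k|+|\lambda_j-\lambda_k|\lesssim t^{-1/2}$ multiplies $|\nabla\varphi_j|+|\partial_t\varphi_j|\lesssim t^{-1/2}$, while $\varphi_K=\sum_{j\in K}\varphi_j$ has derivatives $O(t^{-1})$. Steps 1 and 3 are fine. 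Your cruder bound $\mathrm{Mod}\lesssim\|\varepsilon\|$ suffices for the parameters because of the gap between $N/4$ and $N/8$, and the weighted estimate has a full power of $t$ to spare.

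The gap is in how Step 2 closes. First, the target $|\frac{d}{dt}\G|\lesssim t^{-1-\epsilon}\|\varepsilon\|_{H^1}^2+\dots$ is not attainable. By your own count, the cluster cutoff terms are exactly of size $t^{-1}\|\varepsilon\|_{H^1}^2$, and they have no sign.

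Second, with $\mathrm{Mod}\lesssim\|\varepsilon\|$, the pairings of $\varepsilon$ against $S_j$ in $\frac{d}{dt}\G$ are a priori $O(\mathrm{Mod}\,\|\varepsilon\|_{H^1})=O(\|\varepsilon\|_{H^1}^2)$. That does not even integrate to $t^{-N/2}$. Orthogonality does kill their leading part. After conjugation by $g_j$, the quadratic form of $\G$ is, up to powers of $\lambda_j$ and $O(a^{-2})$ errors, $(L_+\re\tilde\varepsilon_j,\re\tilde\varepsilon_j)+(L_-\imm\tilde\varepsilon_j,\imm\tilde\varepsilon_j)$. The operators $L_\pm$ map the profiles of $S_j$ into $0,Q,|y|^2Q,\Lambda Q,\nabla Q$ in the appropriate real or imaginary components, and these are annihilated by \eqref{eq orthogonality}. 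However, the remainder coming from $W_j^{(N)}$ and the interaction potentials is $O(a^{-2})\,\mathrm{Mod}\,\|\varepsilon\|=O(\|\varepsilon\|^2/t)$. It is not the $\mathrm{Mod}\,\|\varepsilon\|^2$ that appears in your display.

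The paper instead builds this orthogonality into the modulation estimate: $\mathrm{Mod}\le C\|\varepsilon\|_{H^1}a^{-2}+C_N(a^{-N-2}+\|\varepsilon\|_{H^1}^2)$, which follows from $L_j\theta=f+O(a^{-2})$ with $\re(\varepsilon,g_jf)=0$. It then obtains $\frac{d}{dt}\G=O(\|\varepsilon\|^2/t)+O_N(\|\varepsilon\|a^{-N-2}+\|\varepsilon\|^3)$. It closes using $\int_t^{T_n}\tau^{-1-N/2}\,d\tau\le\frac2N t^{-N/2}$, which gives $|\G|\le\frac CN t^{-N/2}+C_Nt^{-3N/4}$. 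This beats the coercivity $\G\ge c_0\|\varepsilon\|_{H^1}^2$ only because $C$ is independent of $N$.

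So the missing idea is to track every $O(t^{-1})\|\varepsilon\|^2$ contribution with an $N$-independent constant and to use the factor $1/N$ gained from integration, not an $\epsilon$-gain in time. This is why $N$ itself, and not only $T_0$, must be taken large.
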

\;\\
The proof of Proposition \ref{prop uniform estimate} now follows directly from Proposition \ref{prop bootstrap}, where we use that  \eqref{eq bootstrap assumption} is implied by a bootstrap argument and 
\[
\| u_n(t)- R_{g^{(N)}}^{(N)}\|_{H^1} \leq  \|\varepsilon(t)\|_{H^1} +  \|R_g^{(N)}- R_{g^{(N)}}^{(N)}\|_{H^1}. 
\]
Thus \eqref{eq bootstrap conclusion} applies to estimate the right side with $t \geq T_* \geq T_0(N)$ large enough.

\section{Modulation and error estimate}\label{sec:mod}

It remains to provide a proof of Proposition \ref{prop bootstrap} which is the purpose of this section. 

\subsection{Modulation}
We consider the  modulation path $P$ in Lemma \ref{lem orthogonality} and Proposition  \ref{prop bootstrap}.\\[4pt]
Let us start by the observation that from  \eqref{mixed N est} and \eqref{mixed N est - same cluster}, the bootstrap assumption \eqref{eq bootstrap assumption} in Proposition \ref{prop bootstrap} implies  for $ T_0 \gg1 $ on $ [T_0, T_n]$
\begin{align}\label{param-est-mix}
&|\alpha(t) | \lesssim t,\; |\beta(t)| \lesssim 1,\; |\lambda(t)| \sim 1,\; |\mu(t)| \lesssim t^{-\f52},\;|\delta(t)| \lesssim t^{-\f72},\\
\label{param-est-mix - same cluster}
&|\alpha_j- \alpha_k| \lesssim t^{\f12}, \;\; |\beta_j- \beta_k| \lesssim t^{-\frac{1}{2}}, \;\; |\lambda_j- \lambda_k| \lesssim t^{-\frac{1}{2}},
\end{align}
where the latter line is true for any cluster $K$ and all $j,k \in K$.\\[4pt]
\emph{\textbf{Step 1.}} \emph{Modulation estimate.}\;  We write $u = u_n$ and let the error for the modulation equations \eqref{eq parameters} (as in \cite[Section 6.1]{GSW}) be defined as
\begin{equation} \label{Mod-error}\begin{aligned} 
    Mod(t) := &\ \big| \dot{\alpha}_j- 2\beta_j- 4\mu_j \alpha_j \big|+ \big| \dot{\beta}_j+ 4\mu_j \beta_j+ (\dot{\mu}_j+ 4\mu_j^2) \alpha_j- B_j^{(N)} \big| \\
    &+ \big| \dot{\lambda}_j+ 4\lambda_j \mu_j- M_j^{(N)} \big| + \big| \dot{\mu}_j+ 4\mu_j^2- \lambda_j^4 \delta_j \big|+ \big| \dot{\delta}_j- D_j^{(N)} \big| \\
    &+ \big| \dot{\gamma}_j+ (\dot{\beta}_j+ 4\mu_j \beta_j) \cdot \alpha_j+ (\dot{\mu}_j+ 4\mu_j^2) |\alpha_j|^2+ |\beta_j|^2- \lambda_j^2 \big|,
\end{aligned} \end{equation}
where now $M_j^{(N)} = M_j^{(N)}(P),\;B_j^{(N)} = B_j^{(N)}(P),\; D_j^{(N)} = D_j^{(N)}(P)  $. Hence evaluating \eqref{H} in $u = \varepsilon + R$ with $R =R_g^{(N)}$ as in \eqref{eq approximate solution}, we obtain 
\begin{align} \label{eq equation of epsilon}
   & i\partial_t \varepsilon+ \Delta \varepsilon- \phi_{|R|^2} \varepsilon- 2\phi_{\re (\varepsilon \overline{R}) } R = \Psi+ \cN(\varepsilon)+ \sum_{j=1}^m S_j(y_j) e^{i(\gamma_j+ \beta_j \cdot x+ \mu_j |x|^2)},\\ \label{eq nonlin epsilon}
   & \cN(\varepsilon)= 2\phi_{\mathrm{Re} (\varepsilon \overline{R})} \varepsilon+ \phi_{|\varepsilon|^2}R+ \phi_{|\varepsilon|^2} \varepsilon,
\end{align}
where $\Psi= \Psi^{(N)}$ as defined in \eqref{eq definition of Psi} is the interaction error. 
\begin{Prop} \label{prop modulation estimate}
We have for all $t \in [T_*, T_n]$
\begin{equation} \label{eq modulation estimate}
Mod(t) \le \frac{C\| \varepsilon \|_{H^1}}{a^2}+ C_N\big(a^{- N -2} + \| \varepsilon \|_{H^1}^2\big).\;\;\;
\end{equation}
\end{Prop}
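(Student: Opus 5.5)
The plan is the standard one from \cite[Section 6.1]{GSW}. Differentiate the orthogonality conditions \eqref{eq orthogonality} in time and substitute the equation \eqref{eq equation of epsilon} for $\partial_t \varepsilon$. This produces a linear system for the modulation vector, whose matrix is a perturbation of a nondegenerate block matrix. Fix $j$ and one of the twelve directions $g_j \chi$, with $\chi \in \{Q, xQ, |x|^2Q, \Lambda Q, \nabla Q, \rho\}$ paired with $\re$ or $\imm$ as in \eqref{eq orthogonality}. Since the orthogonality holds identically on $[T_*,T_n]$,
\[
0=\frac{\d}{\d t}\,\re\big(\varepsilon, g_j\chi\big) = \re\big(\partial_t\varepsilon, g_j\chi\big)+\re\big(\varepsilon,\partial_t(g_j\chi)\big),
\]
and similarly for the imaginary pairings. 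We then replace $i\partial_t\varepsilon$ using \eqref{eq equation of epsilon}.

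The terms split into four groups.

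\textbf{(a) Modulation terms.} These come from $\sum_k S_k e^{i(\cdots)}$. Pairing the $k=j$ term with $g_j\chi$ and rescaling by $y_j=\lambda_j(x-\alpha_j)$, the leading part (replacing $V_j^{(N)}$ by $Q$) gives a $12\times12$ matrix acting on the components of $Mod(t)$. Its entries are the pairings in Lemma \ref{lem straight}(iii): $(Q,\rho)\neq0$, $(xQ,\nabla Q)=-2\|Q\|^2_{L^2}$, $(\Lambda Q,|x|^2Q)\neq0$, $\|Q\|^2_{L^2}$, and so on, together with parity cancellations. The generalized-kernel structure makes this matrix triangular with nonzero diagonal, hence invertible with bounds uniform in $t$. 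The corrections are of order $a^{-1}$, coming from $\delta_j\rho$ and from $W_j^{(N)}$, which is admissible of degree $\ge2$ (Lemma \ref{lem properties of admissible functions-2}). There are also corrections of order $\|\varepsilon\|_{H^1}$ coming from $\re(\varepsilon,\partial_t(g_j\chi))$, since $\partial_t$ of the modulation contains the same $Mod$ combinations. Finally, the cross terms $k\neq j$ are $O(e^{-ca})$ because the profiles are exponentially localized at distance $\gtrsim a$. All of these are absorbed into the diagonal for $T_0$ large.

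A point specific to the hyperbolic-parabolic regime is that $\partial_t(g_j\chi)$ contains the factors $\dot\mu_j|x|^2$ and $\dot\beta_j\cdot x$, and $|\alpha_j|\sim t$ is large while $a\sim t^{1/2}$. To handle this I would first write $Mod$ in the combinations of \eqref{Mod-error}, which already contain $\alpha_j$ exactly as the phase expansion around $\alpha_j$ does. I would then use \eqref{param-est-mix} together with the bootstrap bound $\|x\varepsilon\|_{L^2}\le t^{-N/4+2}$ to control the leftover weights. With $\mu\lesssim t^{-5/2}$ and $|\alpha|\lesssim t$, the products $\mu_j\alpha_j$ and $\mu_j|\alpha_j|^2$ still decay, which is exactly why Definition \ref{Def-adm} was modified.

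\textbf{(b) The linear part.} This is $\Delta\varepsilon-\phi_{|R|^2}\varepsilon-2\phi_{\re(\varepsilon\bar R)}R$. Near soliton $j$ it equals, after rescaling, $-\lambda_j^4(L_+\varepsilon_1+iL_-\varepsilon_2)$, plus errors from $W_j^{(N)}$, $\delta_j\rho$, and the tails of the other solitons. Moving $L_\pm$ onto $\chi$ by self-adjointness and using Lemma \ref{lem straight}(ii), $L_\pm\chi$ lies in the span of the other orthogonality directions, so these contributions vanish by \eqref{eq orthogonality}. The phase terms $\beta_j\cdot x$ and $\mu_j|x|^2$ are accounted for by the $\partial_t(g_j\chi)$ terms, exactly as in the formal computation leading to $E_j$. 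What survives is bounded by $\|\varepsilon\|_{H^1}$ times the size of the inter-soliton potential near $\alpha_j$. The $\phi^{(1)}$ part of that potential, namely $f$ in \eqref{eq f}, is $O(a^{-2})$, and $W_j^{(N)}$ contributes $O(a^{-2})$ as well. Since $T_j^{(1)}=f\Lambda Q$, I would check that no $a^{-1}$ term arises. This yields the $C\|\varepsilon\|_{H^1}a^{-2}$ term.

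\textbf{(c) The interaction error.} By Proposition \ref{prop accuracy of approximate solution}, $(\Psi,g_j\chi)=O(a^{-N-2})$.

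\textbf{(d) The nonlinearity.} Using Hardy--Littlewood--Sobolev, $\cN(\varepsilon)$ contributes $O(\|\varepsilon\|_{H^1}^2)$.

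Inverting the system then gives \eqref{eq modulation estimate}; the $\gamma_j$ equation comes from the $\re(\cdot, g_jQ)$-type direction with $V_j$ paired against $Q$. The main obstacle is the bookkeeping in (b): one must show that the coupling of $\varepsilon$ to the modulation is really $O(a^{-2})$ rather than $O(a^{-1})$ or $O(|\alpha|\mu)$, given that $|\alpha_j|\gg a$. I would first verify the $\mu_j|x|^2$ and $\delta_j|y|^2$ terms using \eqref{param-est-mix}.
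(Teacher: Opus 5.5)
Your proposal follows essentially the same route as the paper's proof. You differentiate the orthogonality conditions (the paper writes this as $\frac{\d}{\dt}\imm\int\varepsilon\overline{\theta_j}$ with $\theta\in\{iQ,ixQ,i|x|^2Q,\Lambda Q,\nabla Q,\rho\}$), move the linearized operator onto $\theta$ so that by Lemma \ref{lem straight} it lands in the orthogonality directions up to $O(a^{-2})$, invert the triangular matrix from the $S_j$ pairings, and bound $\Psi$ and $\cN(\varepsilon)$ by $a^{-N-2}$ and $\|\varepsilon\|_{H^1}^2$.

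Two small corrections. First, the condition $\re(\varepsilon,g_jQ)=0$ gives the $\dot\delta_j$-equation through $(\rho,Q)\neq0$, while the $\gamma_j$-equation comes from $\imm(\varepsilon,g_j\rho)=0$ through $(Q,\rho)\neq0$ (coupled triangularly to the $\mu_j$-component, which the $\Lambda Q$ direction fixes), so $\|Q\|_{L^2}^2$ is not a diagonal entry. Second, you should not invoke $\|x\varepsilon\|_{L^2}$: a bound through it would not have the stated form, and it is unnecessary, since the $|x|$-weights are handled by the exponential localization of $g_j\chi$ together with $|\alpha_j|\lesssim a^2$ (any $Mod$-terms carrying powers of $|\alpha_j|$ are absorbed into the left side because $|\alpha_j|^2\|\varepsilon\|_{H^1}\ll1$).
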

\begin{proof}
The proof is as in \cite[Section 6.1]{GSW}, hence let us give a sketch and spare details.\\[4pt]
We note for $ \cN(\varepsilon)$ in \eqref{eq equation of epsilon}  we have
$ |\cN(\varepsilon)| \lesssim \| \varepsilon \|_{H^1}^2 $ and in order to conclude \eqref{eq modulation estimate} we now rely on the orthogonality conditions in Lemma \ref{lem orthogonality}.\\[4pt]
First let $\theta_j= g_j \theta$ be the modulation of a smooth decaying function, more precisely we require 
\begin{align} \label{eq decay of theta}
  &\theta_j(y_j) = \lambda_j^2 \theta(\lambda_j(x - \alpha_j)) e^{i(\gamma_j + \beta_j \cdot x + \mu_j|x|^2)},\;\;|\nabla^k \theta(x)| \le C_k e^{-c_k|x|},\;\;x \in \mathbb{R}^4,\; k \geq 0. 
\end{align}
Considering \eqref{eq equation of epsilon} and taking $T_0(N) \gg 1 $ large enough we check that, after some careful calculation
\begin{equation} \begin{aligned}
    \frac{\d}{\dt} \bigg(\mathrm{Im} \int \varepsilon \overline{\theta_j}\; \d x \bigg)= &\ -\mathrm{Re} \int \varepsilon \lambda_j^2\overline{ (L_j \theta) e^{i\gamma_j+ i\beta_j \cdot x + i \mu_j |x|^2}}- 2\mathrm{Re} \int \varepsilon \phi_{\mathrm{Re} (\theta_j \overline{R}_j)} \sum_{k \neq j} \overline{R_k}\; \d x \\
    &+ \lambda_j^6 \mathrm{Re} \int S_j \overline{\theta}+ O_N \left( \frac{\| \varepsilon \|_{H^1}}{a^3}+ Mod \| \varepsilon \|_{H^1}+ \frac{1}{a^{N+2}}+ \| \varepsilon \|_{H^1}^2 \right)  
\end{aligned} \end{equation}
from Section \ref{sec:traje}, Proposition \ref{prop accuracy of approximate solution} and the definition of $Mod(t)$. Here we have set
\begin{equation}
    L_j \theta:= -\Delta \theta+ \theta+ 2\phi_{\mathrm{Re} (\theta \overline{V_j})} V_j+ \bigg( \phi_{|V_j|^2}+ \sum_{k \neq j} \psi_{|V_k|^2}^{(1)} \bigg) \theta. 
\end{equation}
Now let us choose $\theta$ to be either one of the functions  $ iQ,\; ix Q,\;i|x|^2 Q,\; \Lambda Q,\;\nabla Q, \;\rho $, then
\begin{align}
    &L_j \theta=f+ O\left( \frac{1}{a^2}+ \frac{C_N}{a^3} \right) \left( e^{-c |x-\alpha_j|}+ \frac{C_N}{a} e^{-c_N|x-\alpha_j|} \right),\\
    &\phi_{\mathrm{Re} (\theta_j \overline{R_j})}= O \left( \frac{1}{a^2}+ \frac{C_N}{a^3} \right) \big( 1+|x-\alpha_j| \big)^2,
\end{align}
where $f$ is such that $\re \ (\varepsilon, g_j f)=0$ using \eqref{eq orthogonality} and the latter expression  follows as in the proof of Proposition \ref{prop accuracy of approximate solution}. 
Further we check the lower bound 
\begin{equation} \label{eq estimate on theta: S_j}
    \sum_{\theta \in \Xi } \sum_{j=1}^m \left| \mathrm{Re} \int S_j(t,x) \overline{\theta}(t,x)\; \d x \right| \ge c\ Mod(t)- \frac{C_N}{a} Mod(t)
\end{equation}
from orthogonality. Combining these we get
\begin{equation}
    Mod(t) \le \frac{C\| \varepsilon \|_{H^1}}{a^2}+ \frac{C_N}{a^{N+2}}+ C_N \| \varepsilon \|_{H^1}^2,\;\;t \in [T_*, T_n],
\end{equation} 
 by taking $T_0(N) \gg 1$ again large enough to absorb all $O_N$ terms.
\end{proof}
\;\\
\emph{\textbf{Step 2.}} \emph{Integration of the modulation estimate.}\;\; We are now concerned with the second, third and fourth line of \eqref{eq bootstrap conclusion} assuming \eqref{eq bootstrap assumption} in Proposition \ref{prop bootstrap}.\\[4pt]
The proof here follows again in \cite[Section 6.1]{GSW}, however requires to check $M_j^{(N)}, B_j^{(N)}, D_j^{(N)}$ with the new admissibility condition in Section \ref{sec:approx} and the calculation in Section \ref{sec:traje}.\\[4pt]
First we note taking $ N \gg1 $ and $T_0 \gg1$ large we have from Proposition \ref{prop modulation estimate}
\[
Mod(t) \leq C t^{- \frac{N}{4}-1},\;\;\forall\; t \in [T_*, T_n].
\]
By Section \ref{sec:traje} each term in $ M_j^{(N)}, B_j^{(N)}$ is admissible of \emph{degree at least three} and we have 
\begin{align} 
&\quad \ \left | M_j^{(N)}(P) - M_j^{(N)}(P^{(N)})\right |  + \left | B_j^{(N)}(P) - B_j^{(N)}(P^{(N)})\right |\\
&\leq C\sum_{j=1}^m\Big(\frac{|\alpha_j - \alpha_j^{(N)}| }{a^4} + \frac{|\beta_j - \beta_j^{(N)}|+ |\lambda_j - \lambda_j^{(N)}|}{a^3} + a|\mu_j - \mu_j^{(N)} | + a^3|\delta_j - \delta_j^{(N)}| \Big),
\end{align}
where we note the observation that in Section \ref{sec:traje} the terms of degree three are explicit and independent of $\mu_j$ and $\delta_j$. By \eqref{eq bootstrap assumption} we then conclude
\begin{equation} \label{first bound mod int}
    \left | M_j^{(N)}(P) - M_j^{(N)}(P^{(N)})\right |  + \left | B_j^{(N)}(P) - B_j^{(N)}(P^{(N)})\right | \leq C t^{-2- \frac{N}{8}}.
\end{equation}
and further obtain from \eqref{eq parameters}, \eqref{first bound mod int} and \eqref{eq bootstrap assumption} as in \cite[Section 6.1]{GSW}
\begin{align} \label{this-bound-lambda}
&\quad \ |\dot{\lambda}_j - \dot{\lambda}^{(N)}_j | + |\dot{\beta}_j - \dot{\beta}^{(N)}_j | \\ \nonumber
&\leq\;  Mod(t) + \left | M_j^{(N)}(P) - M_j^{(N)}(P^{(N)})\right |  + \left | B_j^{(N)}(P) - B_j^{(N)}(P^{(N)})\right |\\ \nonumber
&\;\;\;\;+ | \lambda_j \mu_j  - \lambda_j^{(N)} \mu_j^{(N)}| +  | \mu_j \beta_j  - \mu_j^{(N)} \beta_j^{(N)}|+ |\lambda_j^4 \delta_j \alpha_j   - (\lambda_j^{(N)})^4 \delta_j^{(N)} \alpha_j^{(N)} | \leq C t^{- \frac{N}{8} -2}.
\end{align} 
Integrating and using \eqref{eq value of epsilon at T_n} we hence infer the claim in \eqref{eq bootstrap conclusion} after taking $N \gg1 $ large. 
Next, from Section \ref{sec:traje}, we know that the first non-vanishing term in $D_j^{(N)}$ is $d_j^{(7)}$ and $|d_j^{(7)}| \lesssim t^{-\f92}$ by the proof of Proposition \ref{prop trajectory}. From this and the admissibility condition we have
\begin{align}
&\quad \ \left | D_j^{(N)}(P) - D_j^{(N)}(P^{(N)}) \right | \\
&\leq C\sum_{j=1}^m\Big(\frac{|\alpha_j - \alpha_j^{(N)}| }{a^9} + \frac{|\beta_j - \beta_j^{(N)}|  + |\lambda_j - \lambda_j^{(N)} |}{a^8} + \frac{|\mu_j - \mu_j^{(N)} |}{a^4} + \frac{|\delta_j - \delta_j^{(N)} |}{a^2} \Big),
\end{align}
and hence we conclude
\begin{equation} \label{this-bound-delta}
    |\dot{\delta}_j - \dot{\delta}^{(N)}_j | \leq Mod(t)+ \left | D_j^{(N)}(P) - D_j^{(N)}(P^{(N)}) \right | \le C t^{-\frac{9}{2}- \frac{N}{8}}
\end{equation}
which we again integrate. Then, directly using  \eqref{eq parameters}, the bound for $Mod(t)$ and integrated versions of \eqref{this-bound-delta}, \eqref{this-bound-lambda}, we infer
\begin{align}
& | \dot{\alpha}_j - \dot{\alpha}_j^{(N)}| \leq C t^{-1- \frac{N}{8}},\;\; | \dot{\mu}_j - \dot{\mu}_j^{(N)}| \leq C t^{-\frac{7}{2}- \frac{N}{8}}.
\end{align} 
which likewise implies the claim in \eqref{eq bootstrap conclusion} for $\alpha_j, \mu_j$ taking $ N \gg1 $ large.
Finally, for the phase $\gamma_j$ we collect the above estimates and infer
\begin{align}
|\dot{\gamma}_j - \dot{\gamma}_j^{(N)}| &\leq C\Big( |\lambda_j - \lambda_j^{(N)} | + |\beta_j - \beta_j^{(N)} | +a |\dot{\beta}_j - \dot{\beta}_j^{(N)} | + \frac{|\alpha_j - \alpha_j^{(N)}|}{a^2} \\
&\qquad \;\; + a |\mu_j \beta_j- \mu_j^{(N)} \beta_j^{(N)}|+ a^2 |\dot{\mu}_j- \dot{\mu}_j^{(N)}| \Big) + Mod(t) \leq   C t^{-1- \frac{N}{8}}.
\end{align}

\subsection{Error estimate} Here we give a proof of the first line of the bootstrap estimate \eqref{eq bootstrap conclusion}.\\[5pt]
The argument follows \cite[Section 6.2]{GSW}, \cite{KRM}, \cite{Wu} and we first observe the by  expanding the energy via $ u = R + \varepsilon$ we have the conservation law\\
\begin{align}\label{energy-cons}
    2\mathcal{E}(u_0) =& \;\;2 \mathcal{E}(R) -2 \mathrm{Re} (\varepsilon, \overline{\Delta R - \phi_{|R|^2} R}) + \G_1(\varepsilon)
\end{align}
where the nonlinear part reads
\begin{align}
    \G_1(\varepsilon) =&\; \int |\nabla \varepsilon|^2+ \int \phi_{|R|^2} |\varepsilon|^2- 2\kappa \int |\nabla \phi_{\mathrm{Re} (\varepsilon \overline{R}) }|^2+ 2\int \phi_{\mathrm{Re} (\varepsilon \overline{R}) } |\varepsilon|^2- \frac{\kappa}{2} \int |\nabla \phi_{|\varepsilon|^2} |^2.
\end{align}
For deriving \eqref{eq bootstrap conclusion} it is not enough to estimate the interactions  $\G_1(\varepsilon)$ in \eqref{energy-cons} and thus we add localizations of mass, momentum, their center and the variance along $ \alpha(t)$ to $\G_1(\varepsilon)$. First let us note the following and refer to a similar proof in \cite[Section 4.2]{Wu}.
\begin{Lemma}  \label{lem cutoff}
There exist $c,C>0$ and $\varphi_j \in C^{1,\infty} (\mathbb{R}_+ \times \mathbb{R}^4)$ for $1 \le j \le m$ such that
\begin{equation}\label{eq cutoff} \begin{gathered}
    0 \le \varphi_j(t,x) \le 1, \quad \sum_{j=1}^m \varphi_j(t,x) \equiv 1,\;\;
    |\partial_t \varphi_j|+ |\nabla \varphi_j| \le \frac{C}{a}, \quad |\partial_t \sqrt{\varphi_j}|+ |\nabla \sqrt{\varphi_j}| \le \frac{C}{a},\\
    \varphi_j(t,x)= \left\{ \begin{aligned}
        &1, \quad |x-\alpha_j(t)| \le ca(t), \\
        &0, \quad |x-\alpha_k(t)| \le ca(t),\ k \neq j.
    \end{aligned} \right.
\end{gathered} \end{equation}
 Moreover, for any cluster $K$ we have
\begin{equation} \label{eq cutoff, mixed case} \begin{gathered}
    |\partial_t \varphi_K|+ |\nabla \varphi_K| \le Ct^{-1}, \quad \text{where } \varphi_K= \sum_{j \in K} \varphi_j.
\end{gathered} \end{equation}
\end{Lemma}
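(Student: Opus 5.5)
I would build the cutoffs in two layers: a partition of unity subordinate to the clusters, followed by a subordinate partition inside each cluster. This separates the two length scales. Different clusters are at distance $\sim t$, and solitons inside one cluster are at distance $\sim t^{\f12}\sim a$.

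\emph{Cluster layer.} Let $K_1,\dots,K_l$ be the clusters. For $i\neq i'$, $j\in K_i$, $k\in K_{i'}$, the estimates \eqref{param-est-mix} together with $v_j\neq v_k$ give $|\alpha_j(t)-\alpha_k(t)|\ge c_0 t$ for $t\ge T_0$. Indeed $\alpha_j(t)=v_jt+O(t^{\f12})$ after the bootstrap closeness to $\alpha^{(N)}$ and Proposition \ref{prop trajectory}. Fix a smooth $\chi\ge0$ with $\chi=1$ on $B(0,1)$ and $\chi=0$ outside $B(0,2)$, and set
\[
\tilde\chi_i(t,x)=\sum_{j\in K_i}\chi\Big(\frac{x-\alpha_j(t)}{\eta t}\Big)^2 .
\]
Choosing $\eta$ small makes the $\tilde\chi_i$ disjointly supported. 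We add a smooth background term $\chi_0\ge0$, supported away from all $\alpha_j$, so that the sum is bounded below. We then normalize, $\varphi_{K_i}=\tilde\chi_i/\sum_{i'}\tilde\chi_{i'}$, with $\chi_0$ assigned to one cluster. Squares are used so that $\sqrt{\varphi}$ stays smooth. Since $|\dot\alpha_j|\lesssim1$, differentiating the argument $(x-\alpha_j)/(\eta t)$ in $t$ gives factors $|\dot\alpha_j|/(\eta t)$ and $|x-\alpha_j|/(\eta t^2)\lesssim t^{-1}$ on the support. Hence $|\partial_t\varphi_K|+|\nabla\varphi_K|\lesssim t^{-1}$, which is \eqref{eq cutoff, mixed case}. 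The same bound holds for $\sqrt{\varphi_K}$.

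\emph{Intra-cluster layer.} Inside a cluster $K$, I would rescale by $a(t)$. Set $\chi^K_j(t,x)=\chi\big((x-\alpha_j)/(c a)\big)^2$ for $j\in K$. For small $c$ these have disjoint supports, since $|\alpha_{jk}|\ge a$. Define
\[
\varphi_j=\varphi_K\,\frac{\chi^K_j+\omega_{j}}{\sum_{k\in K}(\chi^K_k+\omega_k)} ,
\]
where the $\omega_k$ share out the gap region. For instance, take $\omega_{k_0}=1-\sum_{k\in K}\chi^K_k(t,x)$ only for one fixed $k_0\in K$, and $\omega_k=0$ otherwise. Then the denominator equals $1$.

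The required properties follow directly:
\begin{itemize}
\item $\sum_j\varphi_j=\sum_K\varphi_K=1$;
\item $\varphi_j=1$ on $|x-\alpha_j|\le ca$, because there $\chi^K_j=1$ and $\varphi_K=1$ once $ca\ll\eta t$;
\item $\varphi_j=0$ near the other centers.
\end{itemize}

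\emph{Derivative bounds.} Derivatives of $\chi^K_j$ give $|\nabla|\lesssim a^{-1}$. For the time derivative we get $|\dot\alpha_j|/a+|x-\alpha_j||\dot a|/a^2$. The second term is $\lesssim a^{-1}$ since $|\dot a|\lesssim1$, using $a$ Lipschitz as a minimum of Lipschitz functions. The first term is $\lesssim a^{-1}$ since $|\dot\alpha_j|\lesssim1$. Combined with the $O(t^{-1})\le O(a^{-1})$ bound on $\varphi_K$, this gives \eqref{eq cutoff}.

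\emph{Expected obstacle.} The main obstacle is regularity and the estimate $|\partial_t\sqrt{\varphi_j}|+|\nabla\sqrt{\varphi_j}|\lesssim a^{-1}$. Here $a(t)=\min|\alpha_{jk}|$ is only Lipschitz, and the square root could degenerate where $\varphi_j\to0$. To handle the first issue, I would replace $a(t)$ by the smooth comparable quantity $\bar a(t)=t^{\f12}$, using $a\sim t^{\f12}$ from \eqref{param-est-mix - same cluster}. To handle the second, I would write every cutoff as a square of a smooth function divided by a denominator bounded below, e.g.\ $\chi_j^2/(\sum_k\chi_k^2)$. Then $\sqrt{\varphi_j}=\chi_j/(\sum_k\chi_k^2)^{1/2}$ is smooth with the same derivative bounds. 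The gap function $\omega_{k_0}$ must likewise be taken as a square $\tilde\omega^2$ with $\tilde\omega$ smooth.
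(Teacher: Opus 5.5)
Your two-scale construction is correct and complete up to minor bookkeeping. The paper itself gives no argument for this lemma; it only refers to a similar proof in \cite[Section 4.2]{Wu}, so your proposal is being compared with a citation rather than a written proof.

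Your layering makes \eqref{eq cutoff, mixed case} automatic. The intra-cluster partition is multiplied onto a cluster cutoff $\varphi_K$ built at scale $\eta t$ around the moving centers, so $\sum_{j\in K}\varphi_j=\varphi_K$ holds identically and the cluster sums never see the scale $a\sim t^{1/2}$. A naive single-scale partition (bumps of radius $\sim a$ around each $\alpha_j$ plus one complementary function) would satisfy \eqref{eq cutoff}. However, it would only give $|\nabla\varphi_K|\lesssim a^{-1}\sim t^{-1/2}$, which is not what the $\mathcal{G}_2$, $\mathcal{G}_3$ estimates use.

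A few points should be tightened.

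\emph{Distance bounds.} The lower bound $a\gtrsim t^{1/2}$ is needed for disjointness of the intra-cluster bumps once you replace $a$ by $t^{1/2}$. Neither it nor the cross-cluster separation $|\alpha_j-\alpha_k|\ge c_0 t$ follows from \eqref{param-est-mix - same cluster}, which is only an upper bound. Both come from the hyperbolic--parabolic asymptotics of $\alpha^\infty$, Proposition \ref{prop trajectory}, and the bootstrap bound $|\alpha-\alpha^{(N)}|\le t^{-N/8}$.

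\emph{Velocity bound.} $|\dot\alpha_j|\lesssim 1$ is not contained in \eqref{param-est-mix}. It comes from $\dot\alpha_j=2\beta_j+4\mu_j\alpha_j+O(Mod(t))$ together with Proposition \ref{prop modulation estimate}.

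\emph{Background term.} $\chi_0$ must be built at scale $t$ around the moving centers; otherwise its time derivative is not $O(t^{-1})$.

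\emph{Square roots.} The cleanest fix has two parts. First, use a single cutoff per cluster, centered at a representative $\alpha_{j_K}$; the cluster has diameter $O(t^{1/2})\ll\eta t$, so this suffices. This avoids $\sum_{j\in K}\chi_j^2$, whose square root is only Lipschitz. Second, take $\chi=\cos(\tfrac{\pi}{2}\psi)$ with $\psi$ a smooth step from $0$ to $1$. By disjointness of supports, each complementary function $1-\sum_k\chi_k^2$ then equals a product of factors $\sin^2(\tfrac{\pi}{2}\psi(\cdot))$. Hence every $\sqrt{\varphi_j}$ is smooth with the same derivative bounds, which is a concrete realization of your ``gap function as a square''.
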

The cut-off functions $\varphi_j$ localize the approximate solution $R$ in \eqref{eq approximate solution} of Section \ref{sec:approx} to $R_j$. To be precise, we have, using Lemma \ref{lem properties of admissible functions-2} for some $c> 0$,
\begin{align}\label{est local varphi}
    \sup_{x \in \R^4}| \varphi_j(t)R(t) - R_j(t) | \leq C_N e^{- c a(t)},\;\; t \geq T_0.
\end{align}
We now define (see \cite[Section 6.2]{GSW}) the functional 
$
\displaystyle \G(\varepsilon)= \sum_{k=1}^5\G_k(\varepsilon),
$
where
\begin{align*}
    \G_2(\varepsilon)=&\; \sum_{j=1}^m \Big( \lambda_j^2+ |\beta_j|^2 \Big) \int \varphi_j |\varepsilon|^2,\;\;\; \G_3(\varepsilon)=\; -2\sum_{j=1}^m \beta_j \int \varphi_j \mathrm{Im} (\nabla \varepsilon \overline{\varepsilon}),\\[3pt]
    \G_4(\varepsilon) =&\; 4 \sum_{j=1}^m \mu_j^2 \int \varphi_j  |x|^2|\varepsilon|^2, \;\;\; \G_5(\varepsilon) =\; 4 \sum_{j=1}^m \mu_j \beta_j \int \varphi_j x |\varepsilon|^2 - 4 \sum_{j=1}^m \mu_j \int \varphi_j \mathrm{Im} (x \nabla \varepsilon \overline{\varepsilon}).
\end{align*}
and note the latter $\G_4, \G_5$ are specific to the $L^2$-critical case, i.e. to $d=4$ dimensions (c.f. \cite{Wu}).\\[5pt]
\emph{Remaining steps}. In order to conclude the first line of \eqref{eq bootstrap conclusion}, we need coercivity for $\G$ in Proposition \ref{prop coercivity} which  will be combined  with an upper bound in Proposition \ref{prop estimate on G(epsilon)} assuming the bootstrap condition \eqref{eq bootstrap assumption}.
\begin{Prop} \label{prop coercivity}
Let $N \ge 2$. For $T_0=T_0(N) \gg 1$ large enough, there exists $c_0>0$ such that 
\begin{equation}
    \G(\varepsilon(t)) \ge c_0 \| \varepsilon(t) \|_{H^1}^2,\;\; t \in [T_*,T_n].
\end{equation}

\end{Prop}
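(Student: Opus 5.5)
We localize $\G(\varepsilon)$ around each soliton with the partition of unity $\varphi_j$ from Lemma \ref{lem cutoff}, reduce to the single-soliton coercivity of Proposition \ref{non-deg-coer-Inv}, and then sum over $j$.

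\textbf{Step 1 (localization).} For each $j$ we set $\varepsilon_j = \sqrt{\varphi_j}\,\varepsilon$. Using $\sum_j \varphi_j \equiv 1$ and $|\nabla \sqrt{\varphi_j}| \le C/a$, we write
\[
\int |\nabla \varepsilon|^2 = \sum_j \int |\nabla \varepsilon_j|^2 + O(a^{-2}\|\varepsilon\|_{L^2}^2).
\]
By \eqref{est local varphi} and the exponential decay of $R_j$, the potential terms in $\G_1$ split up to errors $O(a^{-2}+C_Ne^{-ca})\|\varepsilon\|_{H^1}^2$:
\begin{itemize}
\item[(a)] $\int\phi_{|R|^2}|\varepsilon|^2$ becomes $\sum_j\int \phi_{|R_j|^2}|\varepsilon_j|^2$ plus the far-field terms $\int \phi_{|R_k|^2}|\varepsilon_j|^2$, $k\neq j$, which are bounded by $\|\phi_{|R_k|^2}\|_{L^\infty(\{\varphi_j>0\})}\lesssim a^{-2}$.
\item[(b)] $\int|\nabla\phi_{\re(\varepsilon\bar R)}|^2$ splits into diagonal pieces $\int|\nabla\phi_{\re(\varepsilon_j\bar R_j)}|^2$ plus cross terms, which decay like $a^{-2}$ by the $|x|^{-2}$ kernel between separated bumps.
\end{itemize}
The cubic and quartic terms in $\G_1$ are $O(\|\varepsilon\|_{H^1}^3)$ by Hardy–Littlewood–Sobolev. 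The local mass and momentum terms $\G_2,\G_3$ already carry $\varphi_j$; we rewrite them in $\varepsilon_j$ using $\mathrm{Im}(\nabla\varepsilon_j\bar\varepsilon_j)=\varphi_j\mathrm{Im}(\nabla\varepsilon\bar\varepsilon)$.

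\textbf{Step 2 (reduction to the linearized operators).} On each piece we undo the modulation by writing $\varepsilon_j = g_j \tilde\varepsilon_j$, where $g_j$ is the action \eqref{eq action}, and combine the localized quadratic terms of $\G_1$ with $\G_2,\G_3,\G_4,\G_5$. The functionals $\G_2$–$\G_5$ are designed for exactly this. The phases $e^{i\beta_j\cdot x}$ and $e^{i\mu_j|x|^2}$ generate the cross terms $|\beta_j|^2|\varepsilon|^2$, $-2\beta_j\mathrm{Im}(\nabla\varepsilon\bar\varepsilon)$, $4\mu_j^2|x|^2|\varepsilon|^2$ and the $\mu_j$-momentum/virial terms, and these cancel precisely against $\G_2$–$\G_5$. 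This uses $|\mu_j||x|\lesssim t^{-5/2}\cdot t\ll1$ on the support of $\varphi_j$; the estimate also needs $|x|\lesssim t$ there, which we obtain by replacing $x$ with $\alpha_j + (x-\alpha_j)$ and using the bounds in \eqref{param-est-mix}. After this cancellation the localized quadratic form is
\[
\lambda_j^{2}\Big[(L_+\re\tilde\varepsilon_j,\re\tilde\varepsilon_j)+(L_-\imm\tilde\varepsilon_j,\imm\tilde\varepsilon_j)\Big] + O\big(a^{-1}+\|\varepsilon\|_{H^1}\big)\|\varepsilon\|_{H^1}^2,
\]
using $V_j^{(N)}=Q+O(a^{-2})$ by Lemma \ref{lem properties of admissible functions-2}, $\lambda_j\sim1$ and $|\delta_j|\lesssim t^{-7/2}$.

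\textbf{Step 3 (coercivity and summation).} We apply \eqref{coer-L+-} to $\tilde\varepsilon_j$. The orthogonality conditions \eqref{eq orthogonality} of Lemma \ref{lem orthogonality} say that $\re\tilde\varepsilon$ is orthogonal to $Q, xQ, |x|^2Q$ and $\imm\tilde\varepsilon$ is orthogonal to $\rho$, computed against the unlocalized $\varepsilon$. Replacing $\varepsilon$ by $\sqrt{\varphi_j}\,\varepsilon$ changes these inner products by only $O(e^{-ca})\|\varepsilon\|_{L^2}$, since $1-\sqrt{\varphi_j}$ vanishes on $|x-\alpha_j|\le ca$ and the test functions decay exponentially. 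Each local form is therefore at least $c\|\varepsilon_j\|_{H^1}^2 - Ce^{-ca}\|\varepsilon\|^2_{L^2}$. Summing over $j$ and using $\sum_j\|\varepsilon_j\|_{H^1}^2\ge\|\varepsilon\|_{H^1}^2 - Ca^{-2}\|\varepsilon\|_{L^2}^2$ gives $\G(\varepsilon)\ge c_0\|\varepsilon\|_{H^1}^2$. To finish we take $T_0(N)$ large, so that $a(t)\gtrsim t^{1/2}$ is large, and use the bootstrap smallness $\|\varepsilon\|_{H^1}\le t^{-N/4}$.

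\textbf{Main obstacle.} The hard step is Step 2 in the hyperbolic–parabolic regime. There $|\alpha_j|\sim t$ while $a\sim t^{1/2}$, so terms like $\mu_j x$ on the support of $\varphi_j$ and the time derivatives of $\varphi_j$ cannot be bounded by $a^{-1}$ naively. We would handle this by expanding around $\alpha_j$, which yields factors $\mu_j\alpha_j=O(t^{-3/2})$ that are admissible-type small quantities. Where only cluster-level regularity is needed, we would use the cluster cutoff bound \eqref{eq cutoff, mixed case}.
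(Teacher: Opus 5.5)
Your overall route is the paper's: set $\varepsilon_j=\sqrt{\varphi_j}\,\varepsilon$ and $\tilde\varepsilon_j=g_j^{-1}\varepsilon_j$, recombine $\G_1$--$\G_5$ into localized $L_\pm$ forms, apply \eqref{coer-L+-} with the localized orthogonality relations, and absorb the cross terms ($O(a^{-1})$, $O(e^{-ca})$) and the cubic/quartic terms via \eqref{eq bootstrap assumption}. Your bookkeeping in Steps 1 and 3 is fine: the IMS identity with $\sum_j\sqrt{\varphi_j}\nabla\sqrt{\varphi_j}=0$, the $a^{-2}$ bounds for far-field potentials, and the $e^{-ca}$ loss in orthogonality.

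The point you single out as delicate, however, is mis-justified. The cancellation in Step 2 needs no smallness at all. With $\theta_j=\gamma_j+\beta_j\cdot x+\mu_j|x|^2$ and $w_j=e^{-i\theta_j}\varepsilon_j$ one has pointwise $|\nabla w_j|^2=|\nabla\varepsilon_j|^2-2\nabla\theta_j\cdot\imm(\nabla\varepsilon_j\overline{\varepsilon_j})+|\nabla\theta_j|^2|\varepsilon_j|^2$, which is exactly what $\G_2$--$\G_5$ supply. Moreover, the claim ``$|x|\lesssim t$ on the support of $\varphi_j$'' is false. Since $\sum_j\varphi_j\equiv1$ on $\R^4$, some $\varphi_j$ has unbounded support, and writing $x=\alpha_j+(x-\alpha_j)$ only controls $\mu_j\alpha_j$. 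Your ``main obstacle'' is also misplaced. This is a fixed-time statement, so $\partial_t\varphi_j$ and the cluster bound \eqref{eq cutoff, mixed case} play no role here; they are needed for $\frac{\d}{\dt}\G_2$ and $\frac{\d}{\dt}\G_3$ in Proposition \ref{prop estimate on G(epsilon)}. The hyperbolic--parabolic regime enters only through Lemma \ref{lem properties of admissible functions-2}. That lemma applies because $a\sim t^{1/2}$ gives $|\alpha|\lesssim a^2$, $|\mu|\lesssim a^{-5}$ and $|\delta|\lesssim a^{-7}$, and you already use it to get $V_j^{(N)}=Q+O(a^{-2})$.

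Where $\mu_j x$ genuinely matters is Step 3, where you silently replace $c\|\tilde\varepsilon_j\|_{H^1}^2$ by $c\|\varepsilon_j\|_{H^1}^2$. From $\nabla\varepsilon_j=e^{i\theta_j}\big(\nabla w_j+i(\beta_j+2\mu_jx)w_j\big)$ you only get $\|\varepsilon_j\|_{H^1}^2\lesssim\|\tilde\varepsilon_j\|_{H^1}^2+\mu_j^2\|x\varepsilon_j\|_{L^2}^2$. When $\mu_j\neq0$ the last term is not bounded by any multiple of $\|\varepsilon\|_{H^1}^2$ (take $\tilde\varepsilon_j$ very spread out). So the norm equivalence you use does not hold, and your false support claim is effectively what would have covered it.

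The fix is to use the weighted bootstrap bound $\|x\varepsilon\|_{L^2}\le t^{-N/4+2}$ from \eqref{eq bootstrap assumption} together with $|\mu_j|\lesssim t^{-5/2}$. This gives $\G(\varepsilon)\ge c_0\|\varepsilon\|_{H^1}^2-Ct^{-5}\|x\varepsilon\|_{L^2}^2\ge c_0\|\varepsilon\|_{H^1}^2-Ct^{-N/2-1}$. The additive error is harmless when combined with Proposition \ref{prop estimate on G(epsilon)} to close the first line of \eqref{eq bootstrap conclusion}. The paper's sketch, via \eqref{eq coercivity of H_j,phi}, passes over this point too and defers to \cite{GSW}.
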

\begin{proof}[Sketch of proof]
The proof follows as provided below \cite[Proposition 6.3]{GSW}  and relies on Lemma \ref{non-deg-coer-Inv} part (2) in Section \ref{sec:gs-properties} combined with the orthogonality in \eqref{eq orthogonality} of Lemma \ref{lem orthogonality}.\\[4pt]
Let $\varepsilon_j= \varepsilon \sqrt{\varphi_j}$ and $\tilde{\varepsilon}_j= g_j^{-1} \varepsilon_j$, that is we define
\begin{equation}
    \tilde{\varepsilon}_j(t,y_j)= \frac{1}{\lambda_j^2(t)} \varepsilon_j(t, \lambda_j^{-1}(t)y_j+ \alpha_j(t)) e^{- i(\gamma_j(t) + \beta_j(t) \cdot (\lambda_j^{-1}y_j+ \alpha_j(t)) + \mu_j |\lambda_j^{-1}y_j+ \alpha_j(t)|^2)}.
\end{equation}
For $T_0(N) \gg1$ large enough we then have
\begin{equation}\label{this-coer-bound}
    \Big( L_+ \mathrm{Re}(\tilde{\varepsilon}_j), \mathrm{Re}(\tilde{\varepsilon}_j) \Big)+ \Big( L_- \mathrm{Im}(\tilde{\varepsilon}_j), \mathrm{Im}(\tilde{\varepsilon}_j) \Big) \ge c \| \tilde{\varepsilon}_j \|_{H^1}^2, \quad \forall t\ge T_0
\end{equation}
and define the truncated functionals
\begin{equation} \begin{aligned}
    \H_{j,\varphi}(\varepsilon) = &\ \int \varphi_j|\nabla \varepsilon|^2+ \int \phi_{|R_j|^2} |\varepsilon|^2- 2 \kappa \int \big| \nabla \phi_{\mathrm{Re}(\varepsilon \overline{{R_j}})} \big|^2 \\
    &\ + \Big( \lambda_j^2+ |\beta_j|^2 \Big) \int \varphi_j |\varepsilon|^2 - 4 \mu_j \int \varphi_j \mathrm{Im}  (x \nabla \varepsilon \overline{\varepsilon}) - 2\beta_j \int \varphi_j  \mathrm{Im} (\nabla \varepsilon \overline{\varepsilon})\\
    &\ + 4 \mu_j \beta_j \int \varphi_j x |\varepsilon|^2 + 4 \mu_j^2 \int \varphi_j |x|^2 |\varepsilon|^2.
\end{aligned} \end{equation}
Therefore we obtain, after some calculations (for details see the proof of \cite[Proposition 6.3]{GSW}), using \eqref{this-coer-bound},  Lemma \ref{lem properties of admissible functions-2} and  \eqref{eq cutoff},\eqref{est local varphi}
\begin{equation} \label{eq coercivity of H_j,phi} \begin{aligned}
    \H_{j,\varphi}(\varepsilon) \geq   c\int \varphi_j ( |\nabla \varepsilon|^2+ |\varepsilon|^2)- \frac{C_N}{a} \| \varepsilon \|_{H^1}^2.
\end{aligned} \end{equation}
Further by Lemma \ref{lem cutoff} we can rewrite 
\begin{equation} \begin{aligned}
    \G(\varepsilon)= &\ \sum_{j=1}^m \H_{j,\varphi}(\varepsilon)+ 2\int \phi_{\mathrm{Re} (\varepsilon \overline{R})} |\varepsilon|^2- \frac{\kappa}{2} \int |\nabla \phi_{|\varepsilon|^2}|^2 \\
    &\ + \sum_{j \neq k} \int \phi_{\mathrm{Re} (R_k \overline{R_j})} |\varepsilon|^2- 2 \kappa \sum_{j \neq k} \int \nabla \phi_{\mathrm{Re} (\varepsilon \overline{R_j})} \cdot \nabla \phi_{\mathrm{Re} (\varepsilon \overline{R_k})}. 
\end{aligned} \end{equation}
The first term is thus estimated by \eqref{eq coercivity of H_j,phi}, whereas the other terms in the first line are $O \big( t^{-N/4} \| \varepsilon \|_{H^1}^2 \big)$ by Proposition \ref{prop accuracy of approximate solution} and the bootstrap assumption \eqref{eq bootstrap assumption}. The two terms in the second line are $O_N \big( e^{-ca}\| \varepsilon \|_{H^1}^2 \big)$ and $O_N \Big( \frac{\| \varepsilon \|_{H^1}^2}{a} \Big)$, respectively.  This shows the claim after taking  $T_0(N) \gg1 $ large enough.
\end{proof}

\begin{Prop} \label{prop estimate on G(epsilon)}
Let us assume \eqref{eq bootstrap assumption} in Proposition \ref{prop bootstrap}. For $ N \geq 2$ and $T_0=T_0(N) \gg1$ large enough, there exists $C, C_N >0$ such that
 \begin{align}\label{G(epsilon) estimate}
     & |\G(\varepsilon(t))|   \leq \frac{C}{N} t^{- \frac{N}{2}} + C_N t^{- \frac{3N}{4}}.
 \end{align}
for all $t \in [T_*,T_n]$.
\end{Prop}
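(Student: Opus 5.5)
We will prove \eqref{G(epsilon) estimate} by the standard energy/localized-conservation-law argument of \cite[Section 6.2]{GSW}, \cite{KRM}, \cite{Wu}: instead of bounding $\G(\varepsilon(t))$ directly, we compute $\frac{\d}{\dt}\G(\varepsilon(t))$ and integrate backwards from $T_n$, where $\varepsilon(T_n)=0$ by \eqref{eq value of epsilon at T_n}, so $\G(\varepsilon(T_n))=0$. The point is to write $\G(\varepsilon)$ via \eqref{energy-cons} as $2\mathcal{E}(u)-2\mathcal{E}(R)+2\mathrm{Re}(\varepsilon,\overline{\Delta R-\phi_{|R|^2}R})$ plus the localized mass/momentum/variance pieces $\G_2,\dots,\G_5$. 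Energy is conserved, so in the time derivative the terms quadratic in $\varepsilon$ with no decaying coefficient cancel. This cancellation uses the equation \eqref{eq equation of epsilon}, the fact that $R$ is built from the symmetry modulations $g_j$, and the orthogonality \eqref{eq orthogonality}.

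\textbf{Step 1 (time derivative of $\G_1$).} Differentiate $-2\mathcal{E}(R)+2\mathrm{Re}(\varepsilon,\overline{\Delta R-\phi_{|R|^2}R})+\G_1(\varepsilon)$ and substitute $i\partial_t\varepsilon$ from \eqref{eq equation of epsilon}. The terms linear in $\varepsilon$ pair against $\Psi$ and $\sum_j S_j$. By Proposition \ref{prop accuracy of approximate solution} they contribute $O(a^{-N-2}\|\varepsilon\|_{H^1})$. By Proposition \ref{prop modulation estimate} they contribute $O(Mod\,\|\varepsilon\|_{H^1})$, improved using orthogonality so that only $\frac{C}{a^2}\|\varepsilon\|^2_{H^1}$ plus higher order survives. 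The cubic and quartic terms in $\cN(\varepsilon)$ give $O(\|\varepsilon\|_{H^1}^3)$. The genuinely quadratic remainder is $2\mathrm{Re}\int \partial_t(\phi_{|R|^2})|\varepsilon|^2$-type terms together with $\partial_t R$ hitting the $\phi_{\mathrm{Re}(\varepsilon\overline R)}$ term. Here $\partial_t R_j$ is replaced, up to $Mod$ and $O(a^{-2})$ errors, by the generator of the symmetries: $(\dot\lambda_j,\dot\alpha_j,\dot\beta_j,\dot\mu_j,\dot\gamma_j)$ acting on $R_j$.

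\textbf{Step 2 (time derivatives of $\G_2$--$\G_5$).} Using Lemma \ref{lem cutoff}, the derivatives of the localized mass, momentum, center and variance produce exactly the quadratic symmetry-generator terms from Step 1 with opposite sign, localized by $\varphi_j$. This is the algebraic identity already used in \cite[Section 6.2]{GSW}: $\lambda_j^2+|\beta_j|^2$, $\beta_j$ and $\mu_j$ are the multipliers making $\Delta-\phi_{|R_j|^2}-(\lambda_j^2+|\beta_j|^2)+\dots$ the conjugated $L_\pm$. The leftovers are of three kinds. First, cutoff derivatives, which are $O(a^{-1}\|\varepsilon\|^2_{H^1})$. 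Second, time derivatives of the coefficients: $\dot\lambda_j,\dot\beta_j=O(a^{-3})$ by \eqref{param-est-mix}, and the $\dot\mu_j$-terms. Third, cross terms between $j\neq k$.

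\textbf{Main obstacle.} The main obstacle is the hyperbolic-parabolic geometry. The terms $\mu_j\int\varphi_j|x|^2|\varepsilon|^2$ and $\mu_j\int\varphi_j x|\varepsilon|^2$ involve $|x|\sim|\alpha_j|\sim t$, not $a\sim t^{1/2}$. Also, $|\nabla\varphi_j|\lesssim a^{-1}\sim t^{-1/2}$ is not integrable against $\|\varepsilon\|^2_{H^1}$ at the rate needed. For the first issue I would use $|\mu|\lesssim t^{-5/2}$, $|\dot\mu|\lesssim t^{-7/2}$ and the bootstrap bound $\|x\varepsilon\|_{L^2}\le t^{-N/4+2}$ from \eqref{eq bootstrap assumption}. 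Together these give contributions of size $t^{-5/2}\cdot t^{-N/2+4}$ and similar, which are of order $C_N t^{-3N/4-1}$ after integration for $N$ large. For the second issue I would regroup the cutoffs by clusters. Within a cluster $K$, $\lambda_j$ and $\beta_j$ agree up to $t^{-1/2}$ by \eqref{param-est-mix - same cluster}, so replace $\varphi_j$ by $\varphi_K$ with errors $t^{-1/2}\cdot a^{-1}\|\varepsilon\|^2\lesssim t^{-1}\|\varepsilon\|^2$. Then use \eqref{eq cutoff, mixed case}, $|\nabla\varphi_K|+|\partial_t\varphi_K|\lesssim t^{-1}$. This yields $|\frac{\d}{\dt}\G|\le \frac{C}{t}\|\varepsilon\|_{H^1}^2+C_N(t^{-N-2}\|\varepsilon\|_{H^1}+\|\varepsilon\|_{H^1}^3+\dots)$, where $C$ is independent of $N$. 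Inserting $\|\varepsilon\|_{H^1}\le t^{-N/4}$ and integrating from $t$ to $T_n$ gives $\int_t^\infty C s^{-1-N/2}\,\d s=\frac{2C}{N}t^{-N/2}$ and $C_N t^{-3N/4}$ for the rest, which is \eqref{G(epsilon) estimate}. The care needed is to verify that every cross-cluster and intra-cluster term indeed carries a $t^{-1}$ factor with an $N$-independent constant, and does not merely carry $a^{-1}$.
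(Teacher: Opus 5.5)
Your overall route is the paper's. You differentiate $\G_1,\G_2,\G_3$ and reduce $\partial_tR_j$ to $-2\beta_j\cdot\nabla R_j+i(\lambda_j^2+|\beta_j|^2)R_j$ up to $O_N(a^{-3})+O(Mod+t^{-1})$. You then cancel the resulting quadratic terms of $\frac{\d}{\dt}\G_1$ against those of $\frac{\d}{\dt}\G_2,\frac{\d}{\dt}\G_3$, and regroup the cutoff derivatives by clusters via $\varphi_K$ and \eqref{eq cutoff, mixed case}. Finally you integrate back from $T_n$ using $\varepsilon(T_n)=0$. The cluster regrouping, which you correctly single out as the new point, is exactly what the paper does.

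The step that fails as written is your treatment of the $\mu$-terms. $\G_4$ is $4\sum_j\mu_j^2\int\varphi_j|x|^2|\varepsilon|^2$, with $\mu_j^2$ rather than $\mu_j$. The size you record, $t^{-5/2}\cdot t^{-N/2+4}=t^{-N/2+3/2}$, is not $O(t^{-3N/4-1})$; it is not even $O(t^{-N/2})$. If $\G$ really contained $\mu_j\int\varphi_j|x|^2|\varepsilon|^2$, the bootstrap bound $\|x\varepsilon\|_{L^2}\le t^{-N/4+2}$ could not close the estimate, because it loses $t^2$ relative to $\|\varepsilon\|_{H^1}$. The same happens for a term $|\dot\mu|\,\|x\varepsilon\|_{L^2}^2\sim t^{-N/2+1/2}$ in $\frac{\d}{\dt}\G$.

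What saves the argument is the actual structure of $\G_4,\G_5$. Either two powers of $\mu$ accompany $|x|^2$, or only one factor $x$ appears and the other factor is controlled by $\|\varepsilon\|_{H^1}$. The paper therefore does not differentiate $\G_4,\G_5$ at all and bounds them directly by Cauchy--Schwarz and the bootstrap:
\[
|\G_4(\varepsilon)|\lesssim t^{-5}\|x\varepsilon\|_{L^2}^2\lesssim t^{-\frac N2-1},\qquad |\G_5(\varepsilon)|\lesssim t^{-\frac52}\|x\varepsilon\|_{L^2}\|\varepsilon\|_{H^1}\lesssim t^{-\frac N2-\frac12}.
\]
Both are $\le\frac{C}{N}t^{-N/2}$ once $T_0(N)\gtrsim N^2$. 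So they enter the $\frac{C}{N}t^{-N/2}$ part of \eqref{G(epsilon) estimate}, not the $C_Nt^{-3N/4}$ part as you claim. In particular, $\G_4,\G_5$ are not needed for any cancellation in the upper bound; they sit in $\G$ for the coercivity of Proposition \ref{prop coercivity}. The $\mu$-dependent pieces of $\partial_tR_j$ are $O(t^{-3/2})$ near $\alpha_j$ and simply go into the error.

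A smaller slip: in this regime $a\sim t^{1/2}$, so the interaction term is $a^{-N-2}\|\varepsilon\|_{H^1}\sim t^{-N/2-1}\|\varepsilon\|_{H^1}$, not $t^{-N-2}\|\varepsilon\|_{H^1}$. It still integrates to $C_Nt^{-3N/4}$.
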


\begin{proof} We omit some details for the following  calculation found in the proof of \cite[ Proposition 6.4]{GSW}.  First using integration by parts we have for $T_0(N) \gg1 $ large  by \eqref{eq equation of epsilon} and Proposition \ref{prop accuracy of approximate solution}
\begin{equation} \begin{aligned}
    \frac{\d \G_1}{\dt}(\varepsilon) &=  O \big( Mod(t) \| \varepsilon \|_{H^1} \big)+ O_N \left( \frac{1}{a^{N+2}} \| \varepsilon \|_{H^1} \right)\\
    &\;\; +4\mathrm{Re} \int \phi_{\mathrm{Re} (\varepsilon \overline{R})} \varepsilon \partial_t \overline{R}+ 2\int \phi_{\mathrm{Re} (\partial_t R \overline{R})} |\varepsilon|^2+ 2\mathrm{Re} \int \phi_{|\varepsilon|^2} \varepsilon \partial_t \overline{R}, 
\end{aligned} \end{equation}
where we note $ |S_j(t,x)| \leq C Mod(t) e^{- c_N |x - \alpha_j|}$. Further since (for $T_0(N) \gg1 $ large)
\begin{align}
    & |V_j^{(N)}(y_j)| \leq C e^{- c|y_j|} + C_N a^{-1}e^{- c_N|y_j|},\\
    &|R_{g,j}^{(N)}(x)| \leq C e^{- c|x - \alpha_j|} + C_N a^{-1} e^{- c|x - \alpha_j|} ,\\[3pt]
    & |M_j^{(N)}| + |B_j^{(N)}| + |D_j^{(N)}| \leq  C a^{-2} + C_N a^{-3},
\end{align}
the estimate \eqref{param-est-mix} implies the following for $R_j = g_j V_j$
\begin{equation} \begin{aligned}
    \partial_t R_j =  -2\beta_j \cdot \nabla R_j+ i\big( \lambda_j^2 + |\beta_j|^2 \big) R_j + \left( O_N \Big(\frac{1}{a^3}\Big) +  O\Big(Mod + \frac{C}{t}\Big) \right) e^{-c_N|x-\alpha_j|}.
\end{aligned} \end{equation}
Hence applying  \eqref{eq modulation estimate} in Proposition \ref{prop modulation estimate}, we infer
\begin{align} \label{eq estimate of G_1} 
    \frac{\d \G_1}{\dt}(\varepsilon) = \sum_{j=1}^m \Bigg( 4\Big( & \lambda_j^2 + |\beta_j|^2 \Big) \int \phi_{\mathrm{Re}( \varepsilon \overline{R})} \mathrm{Im} (\varepsilon \overline{R_j}) - 8\int \phi_{\mathrm{Re} (\varepsilon \overline{R})} \mathrm{Re} (\varepsilon \beta_j \cdot \nabla \overline{R_j})\\ \nonumber
    &- 4\int \phi_{\mathrm{Re} (\beta_j \cdot \nabla R_j \overline{R_j})} |\varepsilon|^2 \Bigg)+ O \left( \frac{\| \varepsilon \|_{H^1}^2} {t} \right)+ O_N \left( \frac{\| \varepsilon \|_{H^1}}{a^{N+2}}+ \| \varepsilon \|_{H^1}^3 \right),
\end{align} 
where we used Hardy-Littlewood-Sobolev's estimate and a straight forward adaption of the distance estimates in \cite[Lemma 2.3]{Wu} for $d =4$ dimensions.\\[4pt]
Moreover by integration by parts, \eqref{param-est-mix}, \eqref{eq equation of epsilon} and  the bound for  $S_j$ as above, we likewise have
\begin{equation} \label{eq intermediate G2}\begin{aligned}
    \frac{\d \G_2}{\dt}(\varepsilon) &= \sum_{j=1}^m \Big( \lambda_j^2+ |\beta_j|^2 \Big) \big[ 4\int \varphi_j \mathrm{Im} (\phi_{\mathrm{Re}(\varepsilon \overline{R})} R \overline{\varepsilon}) + \int \Big( \partial_t \varphi_j |\varepsilon|^2+ 2\nabla \varphi_j \mathrm{Im}(\nabla \varepsilon \overline{\varepsilon}) \Big) \big]\\
    &\;  +  O_N \left( \frac{\| \varepsilon \|_{H^1}}{a^{N+1}}+ \| \varepsilon \|_{H^1}^3 \right) + O( Mod(t) \| \varepsilon \|_{H^1}),
\end{aligned} \end{equation}
where we recall $|\Psi|\lesssim a^{-N-2}, \mathcal{N}(\varepsilon) \lesssim \|\varepsilon\|_{H^1}^2$ implying the first term in the second line. Let us note by the modulation estimate \eqref{eq modulation estimate} and the localization property of $\varphi_j$ in \eqref{est local varphi} we obtain 
\begin{equation} \label{eq G_3 cutoff error} \begin{aligned}
     &\sum_{j=1}^m \Big( \lambda_j^2+ |\beta_j|^2 \Big) \big[ \int \varphi_j \mathrm{Im} (\phi_{\mathrm{Re}(\varepsilon \overline{R})} R \overline{\varepsilon})  + O( Mod(t) \| \varepsilon \|_{H^1})\\
     &=\; - \sum_{j=1}^m \Big( \lambda_j^2+ |\beta_j|^2 \Big)  \int \phi_{\mathrm{Re}(\varepsilon \overline{R})}\mathrm{Im} ( \overline{R}_j \varepsilon)  + O\big(\frac{\| \varepsilon \|_{H^1}}{a^{N+2}} + \| \varepsilon \|_{H^1}^3\big) + O_N\big(\frac{\| \varepsilon \|_{H^1}^2}{t}\big).
\end{aligned} \end{equation}
For the remaining terms in \eqref{eq intermediate G2} we consider the clusters $K$  and select (arbitrary) $\lambda_{K},  \beta_K $ such that $\lambda_{j}^2 = \lambda_K^2 + O(t^{- \f12})$ and  $|\beta_j|^2 = |\beta_K|^2 + O(t^{- \f12})$ for all $j \in K$ by \eqref{param-est-mix - same cluster}. Then
\begin{align}
&\sum_{j=1}^m \Big( \lambda_j^2+ |\beta_j|^2 \Big) \int \Big( \partial_t \varphi_j |\varepsilon|^2+ 2\nabla \varphi_j \mathrm{Im}(\nabla \varepsilon \overline{\varepsilon}) \Big)\\ \nonumber
&= \sum_{i =1}^l \Big( \lambda_{K_i}^2+ |\beta_{K_i}|^2 \Big) \int \Big( \partial_t \varphi_{K_i} |\varepsilon|^2+ 2\nabla \varphi_{K_i} \mathrm{Im}(\nabla \varepsilon \overline{\varepsilon}) \Big) + O_N\big(\frac{\| \varepsilon \|_{H^1}^2}{t}\big)\\
&= O_N\big(\frac{\| \varepsilon \|_{H^1}^2}{t}\big),
\end{align}
where we used $|\partial_t \varphi_j|+ |\nabla \varphi_j|= O(t^{-1/2})$ in the second line and  \eqref{eq cutoff, mixed case} to conclude the third line. 
Similarly we compute (for more details see \cite[Section 6.2]{GSW})
\begin{equation} \label{eq estimate of G_3} \begin{aligned}
    \frac{\d \G_3}{\dt} &=  \sum_{j=1}^m \left( 8\int \phi_{\mathrm{Re} (\varepsilon \overline{R})} \mathrm{Re}(\varepsilon \beta_j \cdot \nabla \overline{R_j})+ 4\int \phi_{\mathrm{Re} (\beta_j \cdot \nabla R_j \overline{R_j})} |\varepsilon|^2 \right) \\
    &\;+ \sum_{j=1}^m \beta_j \int \bigg( \nabla \varphi_j \Big( 2|\nabla \varepsilon|^2+ 2\phi_{\mathrm{Re} (\varepsilon \overline{R})} \mathrm{Re} ( \varepsilon \overline{R}) + \phi_{|R|^2} |\varepsilon|^2 \Big) + \partial_t \varphi_j \mathrm{Im} (\nabla \varepsilon \overline{\varepsilon}) \bigg)\\  
    &\; +O\left( \frac{\| \varepsilon \|_{H^1}^2}{t} \right)+ O_N \left( \frac{\| \varepsilon \|_{H^1}}{a^{N+2}}+ \frac{\| \varepsilon \|_{H^1}^2} {a^2}+ \| \varepsilon \|_{H^1}^3 \right).
\end{aligned} \end{equation}
Then the second line is of order $O\left( \frac{\| \varepsilon \|_{H^1}^2}{t} \right)$ by the cluster decomposition for $\beta_j$ as in $\mathcal{G}_2$ above, i.e. we note by \eqref{param-est-mix - same cluster}, \eqref{eq cutoff} and \eqref{eq cutoff, mixed case} we have 
\begin{equation}
 |\beta_K|\Big( |\partial_t \varphi_K|+ |\nabla \varphi_K| \Big) +  \max_{j \in K}\Big[ |\beta_j- \beta_K| \cdot \Big( |\partial_t \varphi_j|+ |\nabla \varphi_j| \Big)\Big] \le \frac{C}{t}
\end{equation}
for any fixed representative $\beta_K$ of the $\beta_j$'s of a cluster $ K$.
Combining the previous calculations, we hence deduce using the  bootstrap assumption \eqref{eq bootstrap assumption} and integration
\begin{align} \label{eq estimate of G_123}
    |\G_1(\varepsilon)+ \G_2(\varepsilon)+ \G_3(\varepsilon)| \le& \int_t^{T_n} C \tau^{- \frac{N}{2} -1} + C_N \tau^{-\frac{3N}{4} - 1}\; \d \tau\\ \nonumber
    \le& \frac{C}{N} t^{- \frac{N}{2}} + C_N t^{-\frac{3N}{4}}.
\end{align}
For the estimates of  $\mathcal{G}_4(\varepsilon), \mathcal{G}_5(\varepsilon)$, we refer to  (4) and (5) of the proof of \cite[Proposition 6.4]{GSW}  and the fact $|\mu_j(t)| \lesssim t^{- \f52}, |\beta_j(t)|\lesssim 1$. Precisely, via applying  again \eqref{eq definition of S_j^N}, \eqref{eq modulation estimate}, \eqref{eq equation of epsilon} and Proposition \ref{prop accuracy of approximate solution} to $ \frac{d}{dt} \int |x \varepsilon |^2$ for estimating $|\mathcal{G}_4(\varepsilon)|$ and using Cauchy-Schwarz for $\mathcal{G}_5(\varepsilon)$, we obtain
\begin{align} \label{This bound}
  |\mathcal{G}_4(\varepsilon)| \leq \frac{C}{N} t^{- \frac{N}{2}} + C_N t^{- \frac{3N}{4}},\;\;  |\mathcal{G}_5(\varepsilon)| \leq \frac{C}{N} t^{- \frac{N}{2}}.
\end{align}
This completes the proof.
\end{proof}

The final proof of the bootstrap Proposition \ref{prop bootstrap} is now a combination of the above Propositions.
\begin{proof}[Proof of \eqref{eq bootstrap conclusion}] It remains to check the first line. Note that by (6.18) in \cite[Section 6.2]{GSW}, which is the same upper bound of $|\mathcal{G}_4(\varepsilon)|$ in \eqref{This bound} but for $\| x \varepsilon \|_{L^2}^2$, we get the bound for  $\| x \varepsilon \|_{L^2}$ in \eqref{eq bootstrap conclusion}.  Then combining \eqref{G(epsilon) estimate} in Proposition \ref{prop estimate on G(epsilon)} and the coercivity of $\G(\varepsilon)$ in Proposition \ref{prop coercivity}, we take $ N $ and $T_0 = T_0(N)$ subsequently large to conclude the claim.
\end{proof}

\bibliographystyle{alpha}

\begin{thebibliography}{99}
		\small
		\bibitem{Caz} Cazenave, T. \emph{Semilinear Schr\"odinger equations}, Courant Lecture Notes in Math., Vol. 10,  NYU Courant Instit. of Math.Sci.,
		New York, 2003.
        \bibitem{CMM} Côte, R., Martel, Y., and Merle, F. \emph{ Construction of multi-soliton solutions for the $L^2$-supercritical gKdV and NLS equations}, Revista Matematica Iberoamericana, 27(1), 2011, pp 273-302.
		\bibitem{El-Schlein} Elgart, A. and Schlein, B. \emph{Mean field dynamics of boson stars},  Commun. Pure Appl. Math., Vol. 60, No. 4, 2007, p 500–545 
        \bibitem{Fan} Fan, C. \emph{log–log blow up solutions blow up at exactly m points}, Annales de l'Institut Henri Poincaré C, Analyse non linéaire. Vol. 34. No. 6. 2017, pp. 1429-1482
	    \bibitem{Fr-Lenz} Fr\"ohlich, J. and Lenzmann, E. \emph{Mean-field limit of quantum Bose gases and nonlinear
		Hartree equation}, S\'eminaire: \'Equations aux D\'eriv\'ees Partielles, p 2003–2004, S\'emin.
		\'Equ. D\'eriv. Partielles, \'Ecole Polytech., Palaiseau, 2004, pp. Exp. No. XIX, 26.
		\bibitem{Ginibre-Velo} Ginibre, J. and Velo, G. \emph{On a class of nonlinear Schr\"odinger equations with nonlocal
		interaction},  Math. Z., Vol. 170, No. 2, 1980, p 109–136.
        \bibitem{GSW} Gómez, J., Schmid T., and Wu Y. \emph{Multisoliton solutions and blow up for the $L^2$-critical Hartree equation}, to appear in Arch. Ration. Mech. Anal., arXiv:2501.18398 (2025). 
        \bibitem{J2} Jendrej, J. and Lawrie, A. \emph{Classification of kink clusters for scalar fields in dimension $1+1$}, arXiv preprint, arXiv:2412.16274 (2024).
        \bibitem{J1} Jendrej, J., Kowalczyk, M. and Lawrie, A. \emph{Dynamics of strongly interacting kink-antikink pairs for scalar fields on a line}. Duke Math. Journal,  171.18, 2022, pp 3643-3705.
		\bibitem{Koch-T} Koch, H. and Tataru, D. \emph{Multisolitons for the cubic NLS in 1-d and their stability}, Publications mathématiques de l'IHÉS, 2024, p 1-116.
		\bibitem{KLR} Krieger, J., Lenzmann, E. and Raphaël, P. \emph{On Stability of Pseudo-Conformal Blowup for $L^2$-critical Hartree NLS}, Annales Henri Poincaré, Vol. 6, No. 10, 2009, p 1159-1205.
		\bibitem{KRM} Krieger, J., Raphaël, P. and Martel, Y.  \emph{Two‐soliton solutions to the three‐dimensional gravitational Hartree equation},  Commun. Pure Appl. Math., Vol.  62, No. 11, 2009, p 1501-1550.
		\bibitem{Lieb} Lieb, E. H. \emph{Existence and uniqueness of the minimizing solution of Choquard’s non-
		linear equation},  Stud. Appl. Math., Vol. 57, No. 2, 1976/77, p 93–105.
        \bibitem{LenzmannNondegen} Lenzmann, E. \emph{Uniqueness of ground states for pseudorelativistic Hartree equations}, Analysis \& PDE, Vol. 2, No. 1, 2009, doi.org/10.2140/apde.2009.2.1
        \bibitem{Marchal-Saari} Marchal, C. and  Saari, D.G. \emph{On the final evolution of the n-body problem},  Journal of differential equations, Vol. 20, No.1,  1976, p 150-186.
		\bibitem{MM} Martel, Y. and Merle, F. \emph{ Multi solitary waves for nonlinear Schrödinger equations},  Ann. Inst. H.
		Poincaré Anal. Non Linéaire, Vol. 23 , No. 6, 2006, p 849–864.
		\bibitem{MR} Martel, Y. and  Raphaël, P. \emph{Strongly interacting blow up bubbles for the mass critical NLS}, Ann. Sci. Éc. Norm. Supér., Vol. 51, No. 4, 2018, p 701–737.
		\bibitem{Merle-k-bl} Merle, F.  \emph{Construction of solutions with exactly k blow-up points for the Schr\"odinger equation with critical nonlinearity}, Comm. Math. Phys., Vol. 129, No. 2, 1990, p 223–240.
        \bibitem{Mer-Raph} Merle, F. and Rapha\"el, P. \emph{ Profiles and quantization of the blow up mass for critical nonlinear Schr\"odinger equation}.  Comm. Math. Phys., 253(3), 2005, pp 675-704.
        \bibitem{N} Nguyen, T.V. \emph{Existence of multi-solitary waves with logarithmic relative distances for the NLS equation}. Comptes Rendus Mathematique 357.1, 2019, pp 13-58.
        \bibitem{P} Perelman, G. \emph{Two soliton collision for nonlinear Schr\"odinger equations in dimension 1}, Ann. Inst. H.
		Poincaré, Anal. Non Linéaire, Vol. 28,  2011, p 357–384.
        \bibitem{PR} Planchon, F. and Rapha\"el, P. \emph{Existence and Stability of the log–log Blow-up Dynamics for the $L^2$-Critical Nonlinear Schr\"odinger Equation in a Domain}. Annales Henri Poincaré, Vol. 8, No. 6, 2007, pp. 1177-1219.
		\bibitem{RodSS} Rodnianski, I., Schlag, W. and Soffer, A. \emph{ Asymptotic stability of $n$-soliton states of nls}, arXiv Preprint, 2003, arXiv:1001.1627
		 \bibitem{Weinstein} Weinstein, M. I. \emph{Nonlinear Schr\"odinger equations and sharp interpolation estimates}, Comm. Math. Phys., Vol.  87, No. 4, 1982/83, p 567–576.
		\bibitem{Wu} Wu, Y. \emph{Existence of multisoliton solutions of the gravitational Hartree equation in three dimensions}, Trans. Amer. Math. Soc. 379 (2026), 2405-2440.
        \bibitem{Wu2} Wu, Y. \emph{Expansive solutions with prescribed asymptotics of the classical $N$-body problem}, arXiv preprint, 	arXiv:2606.11509 (2026).
		\bibitem{YRZ-Hartree} Yang, K., Roudenko, S. and Zhao Y. \emph{Stable blow‐up dynamics in the $L^2$‐critical and $L^2$‐supercritical generalized Hartree equation}, Studies in Applied Mathematics, Vol. 145, No. 4, 2020, p 647-695.
		
	\end{thebibliography}

	\vspace{1cm}

\end{document}